\documentclass{article}
\usepackage{graphicx} 
\usepackage{amssymb, amsmath, amsthm}

 \newtheorem{thm}{Theorem}[section]
 \newtheorem{cor}[thm]{Corollary}
 \newtheorem{lem}[thm]{Lemma}
 \newtheorem{prop}[thm]{Proposition}

    \theoremstyle{definition}
 \newtheorem{defn}[thm]{Definition}

 \newtheorem{exam}[thm]{Example}
 \theoremstyle{remark}
 \newtheorem{rem}[thm]{Remark}
\numberwithin{equation}{section}
\title{Structures and comodules of Hom-post Lie
coalgebras }

\author{Houndedji Damien, \\ D\'epartement de Math\'ematiques, Universit\'e Gamal Abdel Nasser de Conakry,\\
BP : 1147, Conakry, Guinea.\\
houndedjidamien@gmail.com\\
and\\
Ibrahima Bakayoko,\\ D\'epartement de Math\'ematiques, Universit\'e de N'Z\'er\'ekor\'e,\\
BP : 50, N'Z\'er\'ekor\'e, Guinea.\\
ibrahimabakayoko27@gmail.com
}

\date{}

\begin{document}

\maketitle
\begin{abstract}
In this paper, we introduce the notions of Hom-tridendriform coalgebras and Hom-post-Lie coalgebras as the
dual notions of Hom-tridendriform algebras and Hom-post-LIe algebras respectively. We give some properties realted to them. Then, we study the relationships between them and their connection with post-Hom-Poisson coalgebras. Next, using the Yau' stwisting in the modules case, we give some constructions of  comodules over post-Hom-Lie coalgebras by twisting either the comodule structures or post-Hom-Lie coalgebra structures.
\\
\\
{
{\bf Keywords.}
Post-Hom-Lie coalgebras, Hom-tridendriform coalgebras, comodules.}\\
{\bf  MSC2020.}  16T15, 16W10, 17B61.
\end{abstract}

\section{Introduction}
Post-Lie algebras first arise from the work of Bruno Vallette \cite{BV} in 2007 through the purely operadic technique of Koszul dualization. In Hom-setting, they are the twisted version of post-Lie algebras \cite{BV}, \cite{BOL}, \cite{XC}, \cite{YC},
 which are both Hom-Lie algebras and commutative dendriform algebras \cite{JL1}, \cite{JL2}.
The two structures being connected by some compatibility conditions.
In \cite{ML}, it shown that they also arise naturally from differential geometry of homogeneous spaces and Klein geometries, topics that are
closely related to Cartan's method of moving frames. The universal
enveloping algebras of post-Lie algebras and the free post-Lie algebra are studied. Some examples and related structures are given.
Among other results, the authors proved in \cite{CLX} that tridendriform and Rota-Baxter Lie algebras lead to the structures of
post-Lie algebras.

Given a classical algebraic structure, the study of its dual (coalgebraic structure) appears some decades after the classical one. For example, Cohomology of Lie algebras was introduced over 70 years ago by C. Chevalley and S. Eilenberg
and given a standard textbook treatment in the classical treatise of H. Cartan and S. Eilenberg
[7]. Nowadays, the study of  invariants of coalgebras has received the attention of mathematicians as a part of the study of the dual to
algebras and for their many special properties. For instance, construction of funtors between Lie algebras and Lie coalgebras \cite{L}; a conformal analog of the anti-equivalence correspondence between the category of finite free differential Lie (super)coalgebras and the category of finite free Lie conformal (super)algebras and
the classification of simple finite-rank differential Lie
(super)coalgebras \cite{BL}; coassociative coalgebras and comodules and their homological algebra \cite{LP}, Koszul correspondence between Lie coalgebras and commutative algebras \cite{AL}; Rota-Baxter coalgebra \cite{R, S}; pre-Lie coalgebras \cite{S}; Dendriform coalgebras \cite{AD}, Hom-(co)algebras \cite{DY}, Hom-(co)modules and so on.

Inspired by our previous former works \cite{B1, B2} and the above works on certain coalgebras, we introduce a
post-Hom-Lie coalgebra as a Hom-Lie coalgebra $(C, \gamma, \alpha)$ together with a linear comultiplication $\Delta : C\rightarrow C\otimes C$ such that :
\begin{eqnarray}
(\alpha\otimes\gamma)\circ\Delta-(\Delta\otimes \alpha)\circ\gamma-(\tau\otimes I)\circ(\alpha\otimes\Delta)\circ\gamma=0,\hspace{4cm}\\
(\Delta\otimes \alpha)\circ\Delta\!-\!(\alpha\otimes\Delta)\circ\Delta\!-\!(\tau\otimes I)\circ(\Delta\otimes \alpha)\circ\Delta\!
+\!(\tau\otimes I)\circ(\alpha\otimes\Delta)\circ\Delta\!+\!(\gamma\otimes \alpha)\circ\Delta=0.
\end{eqnarray}

In the present paper, we aim to
introduce and study post-Hom-Lie coalgebras as the dual structure of post-Hom-Lie algebras. The paper is organized as follows.
In section 2, we recall definitions of Hom-coassociative coalgebras, Hom-Lie coalgebras and Hom-preLie coalgebras, and some elementaries properties of these Structures.
 In section 3, we introduce Hom-tridendriform coalgebras, give some constructions by twisting, make them in connection with Hom-associative Rota-baxter coalgebras and Hom-preli coalgebras. Then we introduce their comodules and a give a twisting result. In section 4, we introduce post-Hom-Lie coalgebras and give constructions of new structures from either a given one, an endomorphism or tensor product. Then we study their admissibitlity and their relationship with Hom-pre-Lie coalgebras and Hom-Lie Rota-Baxter coalgebras. In order to study the connection of post-Hom-Lie coalgebra and cocommutative Hom-Poisson coalgebras, we introduce  commutative Hom-tridendriform coalgebras and Post-Hom-Poisson coalgebras. In the last section, we introduction of comodules over post-Hom-Lie coalgebras and the given of various constructions from tensor product or by twisting the structures of either post-Hom-Lie or comodule.

In this paper, we work over a field $\mathbb{F}$ of characteristic $0$.
\section{Preliminaries}
In this section, we recall definition and basic properties of Hom-coassociative coalgebras, Hom-Lie coalgebras and Hom-preLie coalgebras.
\begin{defn}
 A Hom-coassociative coalgebra is a triple $(A,\Delta_\cdot,\alpha)$ consisting of a vector space $A$, a $\mathbb{K}$-bilinear map
$\Delta_\cdot : A\longrightarrow A\otimes A $ and a linear map $\alpha :A\longrightarrow A $ satisfying 
\begin{eqnarray}
(\alpha\otimes\Delta_\cdot)\circ\Delta_\cdot=(\Delta_\cdot\otimes\alpha)\circ\Delta_\cdot\quad\mbox{(coassociativity)}, \label{coasso}.
\end{eqnarray}
If in addition $\alpha$ satisfies
\begin{eqnarray}
\Delta_\cdot\circ\alpha=(\alpha\otimes\alpha)\circ\Delta_\cdot\quad\mbox{(comultiplicativity)}, \label{multip}
\end{eqnarray}
then $(A,\Delta_\cdot,\alpha)$ is said to be comultiplicative.
\end{defn}
When $\alpha=Id_A$, $(A,\Delta, Id_A)$, simply denoted $(A,\Delta)$, is a coassociative coalgebra.\\

 The lemma below allows to get a Hom-coassociative coalgebra from an coassociative coalgebra and an coalgebra endomorphism.
\begin{lem}\label{ha1}
 Let $(A,\Delta)$ be a coassociative coalgebra and $\alpha: A\longrightarrow A$ be a coalgebra endomorphism.
Then, the triple $(A,\Delta_\alpha,\alpha)$, where $\Delta_\alpha=\Delta\circ\alpha$, is a comultiplicative Hom-coassociative coalgebra.
\end{lem}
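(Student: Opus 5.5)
The argument is a direct verification of the two defining identities (coassociativity and comultiplicativity) for $\Delta_\alpha=\Delta\circ\alpha$. It uses only that $\Delta$ is coassociative, $(\Delta\otimes Id_A)\circ\Delta=(Id_A\otimes\Delta)\circ\Delta$, and that $\alpha$ is a coalgebra endomorphism, $\Delta\circ\alpha=(\alpha\otimes\alpha)\circ\Delta$. The strategy is to push every copy of $\alpha$ to the right-hand end of each composite, past the comultiplications, using the endomorphism property, and then compare.

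\textbf{Comultiplicativity.} We compute
\[
\Delta_\alpha\circ\alpha=\Delta\circ\alpha\circ\alpha=(\alpha\otimes\alpha)\circ\Delta\circ\alpha=(\alpha\otimes\alpha)\circ\Delta_\alpha ,
\]
where the middle equality is the endomorphism property applied to the first $\alpha$.

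\textbf{Hom-coassociativity.} For the left-hand side,
\[
(\alpha\otimes\Delta_\alpha)\circ\Delta_\alpha=(\alpha\otimes\Delta\circ\alpha)\circ\Delta\circ\alpha=(Id_A\otimes\Delta)\circ(\alpha\otimes\alpha)\circ\Delta\circ\alpha .
\]
Replacing $(\alpha\otimes\alpha)\circ\Delta$ by $\Delta\circ\alpha$ gives
\[
(\alpha\otimes\Delta_\alpha)\circ\Delta_\alpha=(Id_A\otimes\Delta)\circ\Delta\circ\alpha^2 .
\]
Symmetrically,
\[
(\Delta_\alpha\otimes\alpha)\circ\Delta_\alpha=(\Delta\otimes Id_A)\circ(\alpha\otimes\alpha)\circ\Delta\circ\alpha=(\Delta\otimes Id_A)\circ\Delta\circ\alpha^2 .
\]
Ordinary coassociativity of $\Delta$ then shows the two expressions coincide.

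The only step needing care is the factorization $\alpha\otimes(\Delta\circ\alpha)=(Id_A\otimes\Delta)\circ(\alpha\otimes\alpha)$ and its mirror image, which is just functoriality of the tensor product. No real obstacle is expected.
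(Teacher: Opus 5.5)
Your proposal is correct: both identities come out exactly as you compute, by factoring $\alpha\otimes(\Delta\circ\alpha)=(Id_A\otimes\Delta)\circ(\alpha\otimes\alpha)$, using the endomorphism property to reduce both sides to $(Id_A\otimes\Delta)\circ\Delta\circ\alpha^2$ and $(\Delta\otimes Id_A)\circ\Delta\circ\alpha^2$, and then applying coassociativity. The paper states this lemma without proof, but your argument is the standard Yau-twisting computation, which is the same technique the paper uses in its own twisting results for Hom-tridendriform and post-Hom-Lie coalgebras.
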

\begin{defn}
A Hom-coassociative Rota-Baxter coalgebra of weight $\lambda\in\mathbb{K}$ is a Hom-coassociative
coalgebra $(A, \Delta_\cdot, \alpha)$ together with a linear map $R : A\rightarrow A$ that satisfies the identities
\begin{eqnarray}
R\circ\alpha&=&\alpha\circ R,\\
(R\otimes R)\circ\Delta &=& \Big((R\otimes I)\circ\Delta + (I\otimes R)\circ\Delta +\lambda \Delta\Big)\circ R.
\end{eqnarray}
\end{defn}

\begin{defn}\label{hplad}
A Hom-Lie coalgebra is a triple $(L,\gamma, \alpha)$ consisting of a vector space $L$, a linear map
 $\gamma : L\longrightarrow L\otimes L$ and a linear map $\alpha :L\longrightarrow L$ satisfying 
 \begin{eqnarray}
  \gamma=-\tau\circ\gamma\quad \mbox{(skew-cocommutativity)},\\
 \gamma\circ\alpha= \alpha^{\otimes^{2}}\circ\gamma\quad \mbox{(comultiplicativity)},\\
(1+\xi+\xi^2)\circ(\alpha\otimes\gamma)\circ\gamma=0\quad \mbox{(Hom-co-Jacobi identity)},
 \end{eqnarray}
where $\tau(x\otimes y)=y\otimes x$, $\xi(x\otimes y\otimes z)=y\otimes z\otimes x$ and $\xi^2(x\otimes y\otimes z)=z\otimes x\otimes y$, for all $x, y, z\in L$.
\end{defn}
When $\alpha=Id_L$, we obtain the definition of Lie coalgebras.\\

The following result is the co-Lie-version of Lemma \ref{ha1}.
\begin{prop}\label{Amb}
 Let $(L,\gamma)$ be a Lie coalgebra and $\alpha$ be a Lie coalgebra endomorphism.  Then, $(L,\gamma_\alpha,\alpha)$
 is a Hom-Lie coalgebra, where $\gamma_\alpha=\gamma\circ\alpha$.
\end{prop}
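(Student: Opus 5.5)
We verify the three axioms of Definition \ref{hplad} for $(L,\gamma_\alpha,\alpha)$ with $\gamma_\alpha=\gamma\circ\alpha$. The only input is that $\alpha$ is a Lie coalgebra endomorphism, i.e.
\[
\gamma\circ\alpha=(\alpha\otimes\alpha)\circ\gamma .
\]
This lets us move $\alpha$ past $\gamma$ freely. The argument mirrors Lemma \ref{ha1}.

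\textbf{Skew-cocommutativity.} We have $\tau\circ\gamma_\alpha=(\tau\circ\gamma)\circ\alpha=-\gamma\circ\alpha=-\gamma_\alpha$, using the skew-cocommutativity of $\gamma$.

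\textbf{Comultiplicativity.} Since $\gamma\circ\alpha=\alpha^{\otimes 2}\circ\gamma$, we get
\[
\gamma_\alpha\circ\alpha=(\gamma\circ\alpha)\circ\alpha=(\alpha^{\otimes2}\circ\gamma)\circ\alpha=\alpha^{\otimes2}\circ\gamma_\alpha .
\]

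\textbf{Hom-co-Jacobi identity.} This is the only step with real content, and it is where I would be careful. Write
\[
(\alpha\otimes\gamma_\alpha)\circ\gamma_\alpha=(\alpha\otimes\gamma\circ\alpha)\circ\gamma\circ\alpha=(I\otimes\gamma)\circ(\alpha\otimes\alpha)\circ(\alpha\otimes\alpha)\circ\gamma\circ\alpha .
\]
Applying the endomorphism property twice (and once more for the inner $\gamma\circ\alpha$) gives $(\alpha\otimes\alpha)\circ(\alpha\otimes\alpha)\circ\gamma=\gamma\circ\alpha^2$. Hence
\[
(\alpha\otimes\gamma_\alpha)\circ\gamma_\alpha=(I\otimes\gamma)\circ\gamma\circ\alpha^3 .
\]
A cleaner way to reach the same point is to note that $(I\otimes\gamma)\circ\gamma\circ\alpha=\alpha^{\otimes3}\circ(I\otimes\gamma)\circ\gamma$, so every power of $\alpha$ can be pushed to the right or to the left as needed.

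Next, $\xi$ is a permutation of tensor factors, so it commutes with $\alpha^{\otimes 3}$. It therefore suffices to apply $(1+\xi+\xi^2)$ to $(I\otimes\gamma)\circ\gamma$ and then compose with $\alpha^3$:
\[
(1+\xi+\xi^2)\circ(\alpha\otimes\gamma_\alpha)\circ\gamma_\alpha=(1+\xi+\xi^2)\circ(I\otimes\gamma)\circ\gamma\circ\alpha^3 .
\]
The right-hand side vanishes by the co-Jacobi identity of the Lie coalgebra $(L,\gamma)$, which is Definition \ref{hplad} with $\alpha=Id_L$.

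The main obstacle is the bookkeeping of the $\alpha$'s in this last step. The key point is that the endomorphism property lets us rewrite the left side as the plain co-Jacobiator composed with a power of $\alpha$, and the precise power does not matter for the conclusion.
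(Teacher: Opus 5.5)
Your argument is correct and is the standard Yau-twisting computation. The paper states this proposition without proof, but its proof of Lemma~\ref{hplm1} follows exactly this pattern: the twisted identity comes out as the untwisted one composed with $\beta^2$.

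There is one bookkeeping slip in the Hom-co-Jacobi step. Since $\alpha\otimes(\gamma\circ\alpha)=(I\otimes\gamma)\circ(\alpha\otimes\alpha)$, only a single factor $\alpha\otimes\alpha$ appears:
\[
(\alpha\otimes\gamma_\alpha)\circ\gamma_\alpha=(I\otimes\gamma)\circ(\alpha\otimes\alpha)\circ\gamma\circ\alpha=(I\otimes\gamma)\circ\gamma\circ\alpha^2 .
\]
So your displayed identity ending in $\alpha^3$ is false as written, because it carries one extra $(\alpha\otimes\alpha)$. As you anticipated, this does not affect the conclusion. The power of $\alpha$ is precomposed, and $(1+\xi+\xi^2)\circ(I\otimes\gamma)\circ\gamma=0$.

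For the same reason, the remark that $\xi$ commutes with $\alpha^{\otimes 3}$ is not needed. Associativity of composition alone gives $(1+\xi+\xi^2)\circ((I\otimes\gamma)\circ\gamma)\circ\alpha^2=0$.
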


The following Lemma connects Hom-coassociative coalgebras to Hom-Lie coalgebras i.e. to any Hom-coassociative
coalgebra $A$ one may associate a Hom-Lie coalgebra.
\begin{lem}\label{Am1}
 Let $(A,\Delta,\alpha)$ be a Hom-coassociative coalgebra. Then, $(A, \gamma,\alpha)$ is a Hom-Lie coalgebra, where
 $\gamma=(1-\tau)\circ\Delta$.
\end{lem}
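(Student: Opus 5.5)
The plan is to verify the three axioms of Definition \ref{hplad} for $\gamma=(1-\tau)\circ\Delta$ directly, reducing everything to permutations of the single map $A:=(\alpha\otimes\Delta)\circ\Delta=(\Delta\otimes\alpha)\circ\Delta$, where the equality is the Hom-coassociativity (\ref{coasso}).

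\emph{Skew-cocommutativity and comultiplicativity.} Since $\tau^2=1$, we get $\tau\circ\gamma=(\tau-1)\circ\Delta=-\gamma$. For comultiplicativity, $\alpha\otimes\alpha$ commutes with $\tau$, so $(\alpha\otimes\alpha)\circ\gamma=(1-\tau)\circ(\alpha\otimes\alpha)\circ\Delta$. This equals $\gamma\circ\alpha$ as soon as $\Delta$ satisfies (\ref{multip}). So I read the lemma as being about a comultiplicative Hom-coassociative coalgebra; this assumption is genuinely needed for this axiom and not for the others.

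\emph{Hom-co-Jacobi identity.} This is the main computation. Write $\tau_{23}=I\otimes\tau$ and use Sweedler-type notation $\Delta(a)=a_1\otimes a_2$. First,
$$(\alpha\otimes\gamma)\circ\gamma=(\alpha\otimes\Delta)\circ\Delta-\tau_{23}\circ(\alpha\otimes\Delta)\circ\Delta-(\alpha\otimes\Delta)\circ\tau\circ\Delta+\tau_{23}\circ(\alpha\otimes\Delta)\circ\tau\circ\Delta .$$
The key observation is that $(\alpha\otimes\Delta)\circ\tau\circ\Delta(a)=\alpha(a_2)\otimes a_{11}\otimes a_{12}$. This is the element $(\Delta\otimes\alpha)\circ\Delta(a)$ with its last factor moved to the front, that is, $\xi^2\circ(\Delta\otimes\alpha)\circ\Delta=\xi^2\circ A$. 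Hence
$$(\alpha\otimes\gamma)\circ\gamma=\bigl(1-\tau_{23}-\xi^2+\tau_{23}\xi^2\bigr)\circ A .$$
Here $\tau_{23}\xi^2$ is some transposition, which I will identify by evaluating on $x\otimes y\otimes z$.

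Applying $1+\xi+\xi^2$, the operator becomes a signed sum over the products of the cyclic group $C_3=\{1,\xi,\xi^2\}$ with the four terms above. The positive term $1$ and the negative term $-\xi^2$ both lie in the coset $C_3$. So $(1+\xi+\xi^2)(1-\xi^2)=0$ because $C_3\xi^2=C_3$. Similarly, $-\tau_{23}$ and $+\tau_{23}\xi^2$ both lie in the odd coset $C_3\tau_{23}$. Because $\tau_{23}\xi^2=\xi\tau_{23}$ (or the analogous conjugation relation), multiplying by $(1+\xi+\xi^2)$ turns each of them into the full coset sum $\sum_{\sigma\in C_3\tau_{23}}\sigma$ with opposite signs, so they cancel too. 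Therefore $(1+\xi+\xi^2)\circ(\alpha\otimes\gamma)\circ\gamma=0$.

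The only delicate point is bookkeeping: checking the identification $(\alpha\otimes\Delta)\circ\tau\circ\Delta=\xi^2\circ(\Delta\otimes\alpha)\circ\Delta$ and the commutation rule between $\tau_{23}$ and $\xi$ with the paper's conventions for $\xi$. I would verify both on elementary tensors $x\otimes y\otimes z$ before assembling the cancellation.
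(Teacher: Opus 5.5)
Your proof is correct. The paper gives no proof of this lemma: it is recalled in the preliminaries as a known fact, and restated later as a proposition attributed to \cite{IB}. So there is no argument in the paper to compare with, and your computation supplies one.

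The two bookkeeping points you flag both hold with the paper's conventions. First, $(\alpha\otimes\Delta)\circ\tau\circ\Delta(a)=\alpha(a_2)\otimes a_{11}\otimes a_{12}$, which is $\xi^2$ applied to $(\Delta\otimes\alpha)\circ\Delta(a)=a_{11}\otimes a_{12}\otimes\alpha(a_2)$. Second, $(I\otimes\tau)\circ\xi^2$ and $\xi\circ(I\otimes\tau)$ both send $x\otimes y\otimes z$ to $z\otimes y\otimes x$. Writing $\tau_{23}=I\otimes\tau$, this gives the compact form $(\alpha\otimes\gamma)\circ\gamma=\bigl((1-\xi^2)-(1-\xi)\circ\tau_{23}\bigr)\circ(\Delta\otimes\alpha)\circ\Delta$. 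Both pieces are annihilated by $1+\xi+\xi^2$ because $\xi^3=1$. One small point: don't name this map $A$, since that letter already denotes the coalgebra.

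Your caveat about comultiplicativity is also right, and it is a defect of the statement, not just a choice of reading. The paper's Hom-Lie coalgebras must satisfy $\gamma\circ\alpha=(\alpha\otimes\alpha)\circ\gamma$, but its Hom-coassociative coalgebras need not satisfy (\ref{multip}). Here is a counterexample. Take a space with basis $e_1,e_2,e_3$, and set $\alpha(e_1)=\alpha(e_2)=0$, $\alpha(e_3)=e_3$, $\Delta(e_3)=e_1\otimes e_2$, and $\Delta(e_1)=\Delta(e_2)=0$. Both sides of (\ref{coasso}) vanish identically. Yet $\gamma(\alpha(e_3))=e_1\otimes e_2-e_2\otimes e_1\neq 0=(\alpha\otimes\alpha)(\gamma(e_3))$.

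So, as your computation shows, skew-cocommutativity and the Hom-co-Jacobi identity hold for every Hom-coassociative coalgebra. The full conclusion, however, needs the extra hypothesis (\ref{multip}) that you added.
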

\begin{defn}
 Let $(A, \Delta_\cdot, \alpha)$ be a Hom-coalgebra structure on $V$ defined by the comultiplication $\Delta_\cdot$ and the coalgebra Homomorphism $\alpha$. Then, $A$ is said to
be Hom-admissible coalgebra over $V$ if the bracket defined  by
\begin{eqnarray}
 \gamma = (1-\tau)\circ\Delta_\cdot \label{lia}
\end{eqnarray}
satisfies the Hom-co-Jacobi identity $ (1+\xi+\xi^2)\circ(\alpha\otimes\gamma)\circ\gamma=0$.
\end{defn}
\begin{rem}
Since the commutator bracket (\ref{lia}) is always skew-cocommutative, it makes any Hom-Lie admissible coalgebra into a Hom-Lie coalgebra.
\end{rem}

\begin{defn}
 A Hom-preLie coalgebra is a triple $(A, \Delta_\cdot, \alpha)$ in which $\Delta_\cdot :  A\rightarrow A\otimes A$ is a linear map, $\alpha : A\rightarrow A$
 is a linear map and
\begin{eqnarray}
(\Delta_\cdot\otimes\alpha)\circ\Delta_\cdot-(\alpha\otimes\Delta_\cdot)\circ\Delta_\cdot=(\tau\otimes I)\circ\Big((\Delta_\cdot\otimes\alpha)\circ\Delta_\cdot-(\alpha\otimes\Delta_\cdot)\circ\Delta_\cdot\Big).
\end{eqnarray}
\end{defn}
\begin{prop}\cite{IB}
 Let $(C, \Delta_C, \alpha_C)$ be a Hom-coassociative coalgebra and
$\gamma_C: C\longrightarrow C\otimes C$ be a linear map defined by
$$\gamma_C(x)=x_1\otimes x_2-x_2\otimes x_1\quad \mbox{with}\quad \Delta_C(x)=x_1\otimes x_2.$$
Then $(L(C), \gamma_C, \alpha_C)$ is a Hom-Lie coalgebra, where $L(C)=C$ as vector space.
\end{prop}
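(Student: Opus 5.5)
The plan is a direct computation in Sweedler notation. The Hom-coassociativity (\ref{coasso}) lets every iterated comultiplication be written through the single element
$$A(x):=(\Delta\otimes\alpha)\circ\Delta(x)=(\alpha\otimes\Delta)\circ\Delta(x).$$
Every term of $(1+\xi+\xi^2)\circ(\alpha\otimes\gamma)\circ\gamma$ is then some permutation of the tensor factors applied to $A(x)$, and the proof reduces to a cancellation in the group algebra of $S_3$.

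\emph{Skew-cocommutativity.} Since $\tau^2=1$, we have $\tau\circ\gamma=\tau\circ\Delta-\Delta=-\gamma$.

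\emph{Comultiplicativity.} If $(A,\Delta,\alpha)$ is comultiplicative in the sense of (\ref{multip}), then, because $\tau$ commutes with $\alpha\otimes\alpha$,
$$\gamma\circ\alpha=(1-\tau)\circ(\alpha\otimes\alpha)\circ\Delta=(\alpha\otimes\alpha)\circ\gamma.$$
The statement does not list comultiplicativity among its hypotheses, yet Definition \ref{hplad} requires it. I will therefore assume it, as in Lemma \ref{ha1}, and note this explicitly in the proof.

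\emph{Hom-co-Jacobi identity.} Write $\Delta(x)=x_1\otimes x_2$ and $\tau_{23}=I\otimes\tau$. Then
$$(\alpha\otimes\gamma)\circ\gamma=(1-\tau_{23})\circ(\alpha\otimes\Delta)\circ(1-\tau)\circ\Delta.$$
The first piece is $(\alpha\otimes\Delta)\circ\Delta(x)=A(x)$. The second piece is
$$(\alpha\otimes\Delta)\circ\tau\circ\Delta(x)=\alpha(x_2)\otimes x_{11}\otimes x_{12}=\xi^2\big(x_{11}\otimes x_{12}\otimes\alpha(x_2)\big)=\xi^2A(x),$$
where the last step uses (\ref{coasso}). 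Hence
$$(\alpha\otimes\gamma)\circ\gamma=(1-\tau_{23})(1-\xi^2)A.$$

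It remains to show $(1+\xi+\xi^2)(1-\tau_{23})(1-\xi^2)=0$ in the group algebra of $S_3$. Set $N=1+\xi+\xi^2$.
\begin{itemize}
\item Since $\xi^3=1$, we have $N\xi^k=N$ for every $k$, so $N(1-\xi^2)=0$. This removes the two terms without $\tau_{23}$.
\item For the two terms with $\tau_{23}$, use that $\tau_{23}$ normalizes the cyclic group: $\tau_{23}\xi^2=\xi^k\tau_{23}$ for some $k$ (namely $k=1$). Then $N\tau_{23}\xi^2=N\xi^k\tau_{23}=N\tau_{23}$, so $N\tau_{23}(1-\xi^2)=0$.
\end{itemize}
This gives the Hom-co-Jacobi identity.

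The only delicate step is the identification $(\alpha\otimes\Delta)\circ\tau\circ\Delta=\xi^2\circ(\Delta\otimes\alpha)\circ\Delta$, which is exactly where Hom-coassociativity enters. I will carry it out carefully in Sweedler notation, checking the placement of $\alpha$ in each tensor slot.
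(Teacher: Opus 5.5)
Your proof is correct. The paper gives no proof of this statement. It is quoted from the literature and repeated as Lemma~\ref{Am1}, both times without proof, so there is no argument of the paper to compare yours against.

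The usual verification expands $(1+\xi+\xi^2)\circ(\alpha\otimes\gamma)\circ\gamma$ into twelve Sweedler terms and cancels them in pairs using (\ref{coasso}). Your factorization $(\alpha\otimes\gamma)\circ\gamma=(1-\tau_{23})(1-\xi^2)A$, together with $N\xi=N$ and $\tau_{23}\xi^2=\xi\tau_{23}$, is exactly that pairing, organized so it can be seen at a glance. It also shows that the Hom-co-Jacobi identity needs nothing beyond Hom-coassociativity. The permutation identity checks out directly: on $a\otimes b\otimes c$, both $\tau_{23}\xi^2$ and $\xi\tau_{23}$ give $c\otimes b\otimes a$.

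Your remark about comultiplicativity is not just caution: the statement is false as written. Take $C=\mathbb{F}e_1\oplus\mathbb{F}e_2$ with $\alpha(e_1)=e_1$, $\alpha(e_2)=0$, $\Delta(e_1)=e_1\otimes e_2$ and $\Delta(e_2)=0$. Both sides of (\ref{coasso}) vanish on $e_1$ and on $e_2$, so $(C,\Delta,\alpha)$ is Hom-coassociative. Yet $\gamma(\alpha(e_1))=e_1\otimes e_2-e_2\otimes e_1\neq 0=(\alpha\otimes\alpha)\gamma(e_1)$.

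So hypothesis (\ref{multip}) must be added. It is needed only for the comultiplicativity axiom of Definition~\ref{hplad}, and with it your argument proves the statement completely.
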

\section{Hom-tridendriform coalgebras and comodules}
 We introduce Hom-tridendriform coalgebras and their comodules, study their twisting and give their relationship with
 Hom-coassociative (Rota-Baxter) coalgebras and Hom-preLie coalgebras.
\subsection{Hom-tridendriform coalgebras}
\begin{defn}\label{htca}
A Hom-tridendriform coalgebra is a vector space $C$ together with three linear maps $\Delta_{-1}, \Delta_{0}, \Delta_{1} : C\rightarrow C$ and
 a linear map $\alpha : A\rightarrow A$ satisfing the  following relations :
 \begin{eqnarray}
   (\Delta_{-1}\otimes\alpha)\circ\Delta_{-1}&=&(\alpha\otimes\Delta_{-1}+\alpha\otimes\Delta_1+\alpha\otimes\Delta_0)\circ\Delta_{-1},\label{c1}\\
(\Delta_1\otimes\alpha)\circ\Delta_{-1}&=&(\alpha\otimes\Delta_{-1})\circ\Delta_1,\label{c2}\\
(\alpha\otimes\Delta_1)\circ\Delta_1&=&(\Delta_{-1}\otimes\alpha+\Delta_1\otimes\alpha+\Delta_0\otimes\alpha)\circ\Delta_1,\label{c3}\\
 (\Delta_{-1}\otimes\alpha)\circ\Delta_0&=&(\alpha\otimes\Delta_1)\circ\Delta_0,\label{c4}\\
 (\Delta_1\otimes\alpha)\circ\Delta_0&=&(\alpha\otimes\Delta_0)\circ\Delta_1,\label{c5}\\
 (\Delta_0\otimes\alpha)\circ\Delta_{-1}&=&(\alpha\otimes\Delta_{-1})\circ\Delta_0,\label{c6}\\
 (\Delta_0\otimes\alpha)\circ\Delta_0&=&(\alpha\otimes\Delta_0)\circ\Delta_0.\label{c7}
 \end{eqnarray}
 \end{defn}
 \begin{exam}
 $(T, \Delta_\dashv, \Delta_\vdash, \Delta_\cdot, \alpha)$ is a Hom-tridendriform coalgebra if and only if \\
 $T^{op}=(T, \Delta_\dashv^{op}, \Delta_\vdash^{op}, \Delta_\cdot^{op}, \alpha)$  is, where
$$\Delta_\dashv^{op}:=\tau\circ\Delta_\vdash,\quad \Delta_\vdash^{op}:=\tau\circ\Delta_\dashv \quad\mbox{and}\quad \Delta_\cdot^{op}:=\tau\circ\Delta_\cdot.$$
\end{exam}
 \begin{rem}
  Hom-tridendriform coalgebras contain Hom-coassociative coalgebras \cite{MS} ($\Delta_{-1}=\Delta_1=0$) and Hom-dendriform coalgebras
($\Delta_0=0$) as special cases.
 \end{rem}
 \begin{prop}
 Let  $(T, \dashv, \vdash, \cdot, \alpha)$ be a finite dimensional Hom-tridendriform algebra. Then, its dual space $T^*$ is provided with a
 Hom-tridendriform coalgebra
 $(T^*, \dashv^*, \vdash^*, \cdot^*, \alpha^*)$, where $ \dashv^*, \vdash^*, \cdot^*, \alpha^*$  are the transpose maps.
\end{prop}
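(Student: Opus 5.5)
The proof is a dualization argument. I first fix the axioms of a Hom-tridendriform algebra $(T,\dashv,\vdash,\cdot,\alpha)$, which are the seven Hom-associativity-type identities
\begin{eqnarray*}
(x\dashv y)\dashv\alpha(z)&=&\alpha(x)\dashv(y\dashv z+y\vdash z+y\cdot z),\\
(x\vdash y)\dashv\alpha(z)&=&\alpha(x)\vdash(y\dashv z),\\
\alpha(x)\vdash(y\vdash z)&=&(x\dashv y+x\vdash y+x\cdot y)\vdash\alpha(z),\\
(x\dashv y)\cdot\alpha(z)&=&\alpha(x)\cdot(y\vdash z),\\
(x\vdash y)\cdot\alpha(z)&=&\alpha(x)\vdash(y\cdot z),\\
(x\cdot y)\dashv\alpha(z)&=&\alpha(x)\cdot(y\dashv z),\\
(x\cdot y)\cdot\alpha(z)&=&\alpha(x)\cdot(y\cdot z).
\end{eqnarray*}
Each of these can be written as an equality of linear maps $T\otimes T\otimes T\to T$, for instance $\dashv\circ(\dashv\otimes\alpha)=\dashv\circ(\alpha\otimes(\dashv+\vdash+\cdot))$. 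I regard each product $\mu\in\{\dashv,\vdash,\cdot\}$ as a linear map $\mu:T\otimes T\to T$.

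Next I use finite dimensionality to identify $(T\otimes T)^*\cong T^*\otimes T^*$ and $(T\otimes T\otimes T)^*\cong T^*\otimes T^*\otimes T^*$ canonically. Under these identifications the transpose $\mu^*:T^*\to T^*\otimes T^*$ is given by $\langle\mu^*(f),x\otimes y\rangle=f(\mu(x\otimes y))$. The two formal facts I need are that transposition reverses composition, $(f\circ g)^*=g^*\circ f^*$, and that it is compatible with tensor products, $(f\otimes g)^*=f^*\otimes g^*$. Both follow by evaluating on elementary tensors. Transposition is also linear, so the transpose of a sum of products is the sum of the transposes.

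I then transpose each of the seven algebra identities. For example, the first gives $(\dashv^*\otimes\alpha^*)\circ\dashv^*=(\alpha^*\otimes(\dashv^*+\vdash^*+\cdot^*))\circ\dashv^*$, which is (\ref{c1}) with $\Delta_{-1}=\dashv^*$, $\Delta_1=\vdash^*$, $\Delta_0=\cdot^*$. The other six identities are matched with (\ref{c2})--(\ref{c7}) in the same way. The only point needing care, and the main (if mild) obstacle, is this bookkeeping: I must choose the correct assignment of $\Delta_{-1},\Delta_0,\Delta_1$ to $\dashv,\cdot,\vdash$, and check that both the order of composition and the position of $\alpha$ in each tensor factor line up with the coalgebra axioms as stated. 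For instance, the second identity $\dashv\circ(\vdash\otimes\alpha)=\vdash\circ(\alpha\otimes\dashv)$ transposes to $(\vdash^*\otimes\alpha^*)\circ\dashv^*=(\alpha^*\otimes\dashv^*)\circ\vdash^*$, which is (\ref{c2}). If some axiom fails to match under this assignment, I will instead use the opposite assignment, swapping $\dashv$ and $\vdash$ and composing with $\tau$, as in the preceding Example.

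Finally, I check that the definition requires nothing further of $\alpha^*$. So if the algebra is multiplicative, I will also note that $\alpha^*$ is comultiplicative with respect to each of the three comultiplications, by transposing $\alpha\circ\mu=\mu\circ(\alpha\otimes\alpha)$. This concludes that $(T^*,\dashv^*,\vdash^*,\cdot^*,\alpha^*)$ is a Hom-tridendriform coalgebra.
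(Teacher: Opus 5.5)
Your dualization argument is correct and is the intended one; the paper states this proposition without proof. With $\Delta_{-1}=\dashv^*$, $\Delta_1=\vdash^*$, $\Delta_0=\cdot^*$, transposing your seven identities via $(f\circ g)^*=g^*\circ f^*$ and $(f\otimes g)^*=f^*\otimes g^*$ gives exactly (\ref{c1})--(\ref{c7}) in that order. So the fallback to the opposite assignment is never needed, and it could not have rescued a mismatch anyway: by the Example following Definition \ref{htca}, a structure is Hom-tridendriform if and only if its opposite is.
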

\begin{defn}
 A morphism $f : (C, \Delta_{-1}, \Delta_0, \Delta_1, \alpha)\rightarrow (C', \Delta'_{-1}, \Delta'_0, \Delta'_1, \alpha')$
 of Hom-tridendriform coalgebras is a linear map of the underline vector spaces such that
 $$f\circ\alpha=\alpha'\circ f,\quad\Delta_{-1}'\circ f=(f\otimes f)\circ\Delta_{-1},
 \quad\Delta_{0}'\circ f=(f\otimes f)\circ\Delta_0,\quad\Delta_1'\circ f=(f\otimes f)\circ\Delta_1.$$
\end{defn}
 \begin{defn}
 A Hom-tridendriform coalgebra $(C, \Delta_{-1}, \Delta_0, \Delta_1, \alpha)$ in which $\alpha$ is an endomorphism, with respect to
 all the products, is said to be multiplicative. 
 \end{defn}

\begin{thm}
  Let $(C, \Delta_{-1}, \Delta_0, \Delta_1, \alpha)$  be a Hom-tridendriform coalgebra and $\beta : C\rightarrow C$ be an  endomorphism.
 Then,
$(C, \Delta^\beta_{-1}=\Delta_{-1}\circ\beta, \Delta^\beta_0=\Delta_0\circ\beta, \Delta^\beta_1=\Delta_1\circ\beta, \beta\circ\alpha)$ 
  is a Hom-tridendriform coalgebra.
 \end{thm}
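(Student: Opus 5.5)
The plan is to read ``endomorphism'' in the statement as an endomorphism of the Hom-tridendriform coalgebra $(C, \Delta_{-1}, \Delta_0, \Delta_1, \alpha)$, in the sense of the morphism definition above with $C'=C$. Concretely, I assume
$$\beta\circ\alpha=\alpha\circ\beta,\qquad (\beta\otimes\beta)\circ\Delta_i=\Delta_i\circ\beta\quad (i=-1,0,1).$$
Under this reading, each of the seven twisted axioms reduces to the corresponding untwisted axiom (\ref{c1})--(\ref{c7}). The reduction comes from one general identity: for all $i,j\in\{-1,0,1\}$,
\begin{eqnarray*}
(\Delta^\beta_i\otimes\beta\alpha)\circ\Delta^\beta_j=\beta^{\otimes 3}\circ(\Delta_i\otimes\alpha)\circ\Delta_j\circ\beta,\qquad
(\beta\alpha\otimes\Delta^\beta_i)\circ\Delta^\beta_j=\beta^{\otimes 3}\circ(\alpha\otimes\Delta_i)\circ\Delta_j\circ\beta .
\end{eqnarray*}

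I prove the first identity by a short chain of rewritings. Since $\beta\alpha=\alpha\beta$, we have $\Delta_i\beta\otimes\beta\alpha=(\Delta_i\otimes\alpha)\circ(\beta\otimes\beta)$. Hence the left-hand side equals $(\Delta_i\otimes\alpha)\circ(\beta\otimes\beta)\circ\Delta_j\circ\beta$. Using the comultiplicativity of $\beta$ for $\Delta_j$, this becomes $(\Delta_i\otimes\alpha)\circ\Delta_j\circ\beta^2$. I then push one copy of $\beta$ back out to the left:
$$\Delta_j\circ\beta^2=(\beta\otimes\beta)\circ\Delta_j\circ\beta,$$
so
$$(\Delta_i\otimes\alpha)\circ(\beta\otimes\beta)\circ\Delta_j\beta=\bigl(\Delta_i\beta\otimes\alpha\beta\bigr)\circ\Delta_j\beta=\bigl((\beta\otimes\beta)\Delta_i\otimes\beta\alpha\bigr)\circ\Delta_j\beta=\beta^{\otimes 3}\circ(\Delta_i\otimes\alpha)\circ\Delta_j\circ\beta .$$
The second identity is the mirror computation with the tensor factors swapped.

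Next I apply these identities to both sides of each axiom (\ref{c1})--(\ref{c7}) written for the twisted structure. Axioms (\ref{c1}) and (\ref{c3}) contain sums $\Delta_{-1}+\Delta_0+\Delta_1$. These are handled by linearity: $\Delta^\beta_{-1}+\Delta^\beta_0+\Delta^\beta_1=(\Delta_{-1}+\Delta_0+\Delta_1)\circ\beta$, and this sum is again comultiplicative with respect to $\beta$. After the rewriting, each twisted axiom has the form $\beta^{\otimes3}\circ(\text{LHS}-\text{RHS})\circ\beta=0$, where LHS and RHS are the two sides of the original axiom. This holds because the original axiom holds.

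I expect the only delicate point to be bookkeeping of where $\beta$ and $\alpha$ sit, in particular the step $\Delta_j\beta^2=(\beta\otimes\beta)\Delta_j\beta$ together with the commutation $\beta\alpha=\alpha\beta$. If the statement were meant with $\beta$ merely linear, the result would be false in general. So I would state explicitly at the outset that $\beta$ is taken to be a morphism in the sense of the preceding definition.
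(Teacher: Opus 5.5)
Your proposal is correct and takes essentially the same approach as the paper: you read $\beta$ as a Hom-tridendriform coalgebra endomorphism, pull every copy of $\beta$ outside, and reduce each twisted axiom to the original one. The paper collects the factors as $(\beta^2)^{\otimes 3}\circ(\cdot)$ and writes this out only for axiom (\ref{c2}), whereas your uniform identity gives $\beta^{\otimes 3}\circ(\cdot)\circ\beta$ for all seven axioms at once. (Your detour through $\Delta_j\circ\beta^2$ is circular, and $\Delta_i\beta\otimes\beta\alpha=\beta^{\otimes 3}\circ(\Delta_i\otimes\alpha)$ follows from comultiplicativity alone, so your identity does not even need $\beta\alpha=\alpha\beta$.)
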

 \begin{proof}
 We only prove (\ref{c2}). We have 
 \begin{eqnarray}
 (\Delta^\beta_{1}\otimes\beta\circ\alpha)\circ\Delta^\beta_{-1}
 &=&(\Delta_{1}\circ\beta\otimes\beta\circ\alpha)\circ\Delta_{-1}\circ\beta\nonumber\\
&=&\beta^{\otimes3}\circ(\Delta_{1}\otimes\alpha)\circ(\beta\otimes\beta)\circ\Delta_{-1}\nonumber\\
&=&\beta^{\otimes3}\circ(\Delta_{1}\circ\beta\otimes\alpha\circ\beta)\circ\Delta_{-1}\nonumber\\
 &=&(\beta^2)^{\otimes3}\circ(\Delta_{1}\otimes\alpha)\circ\Delta_{-1}\nonumber\\
 &=&(\beta^2)^{\otimes3}\circ(\alpha\otimes\Delta_{-1})\circ\Delta_1\quad\mbox{(by (\ref{c2})})\nonumber\\
 &=&(\beta^2\circ\alpha\otimes((\beta^2\otimes\beta^2)\circ\Delta_{-1})\circ\Delta_1\nonumber\\
 &=&(\beta\circ\alpha\circ\beta\otimes\Delta_{-1}\circ\beta^2)\circ\Delta_1\nonumber\\
 &=&(\beta\circ\alpha\otimes\Delta_{-1}\circ\beta)\circ(\beta\otimes\beta)\circ\Delta_1\nonumber\\
 &=&(\beta\circ\alpha\otimes\Delta_{-1}\circ\beta)\circ\Delta_1\circ\beta\nonumber\\
 &=&(\beta\circ\alpha\otimes\Delta^\beta_{-1})\circ\Delta^\beta_1\nonumber.
 \end{eqnarray}
 The other axioms are proved similarly.
 \end{proof}
\begin{cor}
 If $(C, \Delta_{-1}, \Delta_0, \Delta_1, \alpha)$ is a multiplicative Hom-tridendriform coalgebra, then so is
$$(C, \Delta^n_{-1}=\Delta_{-1}\circ\alpha^n, \Delta^n_0=\Delta_0\circ\alpha^n, \Delta^n_1=\Delta_1\circ\alpha^n, \alpha^{n+1}).$$
 \end{cor}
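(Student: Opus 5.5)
The plan is to derive the corollary directly from the preceding theorem by taking $\beta=\alpha^n$. Two things must be checked: that $\alpha^n$ is an admissible endomorphism in the sense of the theorem, and that multiplicativity survives the twisting. Multiplicativity of $(C,\Delta_{-1},\Delta_0,\Delta_1,\alpha)$ means
\[
\Delta_i\circ\alpha=(\alpha\otimes\alpha)\circ\Delta_i \quad \text{for } i\in\{-1,0,1\}.
\]
A straightforward induction on $n$ then gives
\[
\Delta_i\circ\alpha^n=(\alpha^n\otimes\alpha^n)\circ\Delta_i .
\]
For the inductive step, write $\Delta_i\circ\alpha^{n+1}=(\Delta_i\circ\alpha^n)\circ\alpha$ and apply the case $n$ and then the case $1$. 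So $\alpha^n$ is an endomorphism of the Hom-tridendriform coalgebra with respect to all three comultiplications, and it obviously commutes with $\alpha$. This is exactly what the proof of the theorem uses: it moves $\beta^{\otimes 3}$ and $\beta\otimes\beta$ past the $\Delta_i$, and commutes $\beta$ with $\alpha$.

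Applying the theorem with $\beta=\alpha^n$ shows that
\[
(C,\ \Delta_{-1}\circ\alpha^n,\ \Delta_0\circ\alpha^n,\ \Delta_1\circ\alpha^n,\ \alpha^n\circ\alpha=\alpha^{n+1})
\]
satisfies the seven axioms (\ref{c1})--(\ref{c7}). The case $n=0$ is trivial.

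It remains to show that the twisted structure is multiplicative, i.e. that $\Delta^n_i\circ\alpha^{n+1}=(\alpha^{n+1}\otimes\alpha^{n+1})\circ\Delta^n_i$. Using the identity from the induction,
\[
\Delta_i\circ\alpha^n\circ\alpha^{n+1}=\Delta_i\circ\alpha^{2n+1}=(\alpha^{2n+1}\otimes\alpha^{2n+1})\circ\Delta_i .
\]
On the other side,
\[
(\alpha^{n+1}\otimes\alpha^{n+1})\circ\Delta_i\circ\alpha^n=(\alpha^{n+1}\otimes\alpha^{n+1})\circ(\alpha^n\otimes\alpha^n)\circ\Delta_i=(\alpha^{2n+1}\otimes\alpha^{2n+1})\circ\Delta_i .
\]
The two expressions agree, which proves multiplicativity.

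The only mildly delicate point is the hypothesis of the theorem. It says only ``endomorphism'', but its proof implicitly needs $\beta$ to be comultiplicative for every $\Delta_i$ and to commute with $\alpha$. So the main obstacle is simply to make explicit that $\alpha^n$ has both properties. This follows from the multiplicativity assumption via the induction above; no further computation is needed.
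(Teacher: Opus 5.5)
Your proof is correct and follows the paper's intended route: the paper gives no separate proof, and the corollary is the preceding twisting theorem specialized to $\beta=\alpha^n$. You also supply the two details the paper leaves implicit. First, multiplicativity makes $\alpha^n$ a Hom-tridendriform endomorphism commuting with $\alpha$. Second, the twisted structure is again multiplicative, since both sides of the required identity equal $(\alpha^{2n+1}\otimes\alpha^{2n+1})\circ\Delta_i$.
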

\begin{cor}
 Let $(C, \Delta_{-1}, \Delta_0, \Delta_1, \alpha)$ be a multiplicative Hom-tridendriform coalgebra such that $\alpha$ be inversible.
 Then 
 $$(C, \Delta_{-1}\circ\alpha^{-1}, \Delta_0\circ\alpha^{-1}, \Delta_1\circ\alpha^{-1})$$ 
 is a tridendriform coalgebra.
 \end{cor}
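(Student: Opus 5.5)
The statement is the last corollary: if $(C,\Delta_{-1},\Delta_0,\Delta_1,\alpha)$ is multiplicative with $\alpha$ invertible, then $\Delta'_i:=\Delta_i\circ\alpha^{-1}$ satisfy the seven identities (\ref{c1})--(\ref{c7}) with $\alpha$ replaced by the identity. My plan is to \emph{untwist} by hand. Since each axiom is a single operator identity, it should suffice to precompose the corresponding Hom-axiom with a suitable power of $\alpha^{-1}$ and move the inverses through using multiplicativity. Applying the preceding theorem with $\beta=\alpha^{-1}$ is not enough. It gives a Hom-tridendriform coalgebra with structure map $\alpha^{-1}\circ\alpha=I$, but that theorem uses the hypothesis that $\beta$ is an endomorphism of the comultiplications.

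\textbf{Step 1: $\alpha^{-1}$ is also multiplicative.} From $\Delta_i\circ\alpha=(\alpha\otimes\alpha)\circ\Delta_i$, precompose with $\alpha^{-1}$ and postcompose with $\alpha^{-1}\otimes\alpha^{-1}$. This gives $(\alpha^{-1}\otimes\alpha^{-1})\circ\Delta_i=\Delta_i\circ\alpha^{-1}$ for $i=-1,0,1$. Hence $\alpha^{-1}$ is a morphism for all three comultiplications. With that, invoking the theorem with $\beta=\alpha^{-1}$ is a legitimate alternative route: it directly yields a Hom-tridendriform coalgebra with structure map $I$, which is exactly a tridendriform coalgebra.

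\textbf{Step 2: direct verification of a representative axiom.} I will check (\ref{c2}) explicitly. First,
$$(\Delta'_1\otimes I)\circ\Delta'_{-1}=(\Delta_1\circ\alpha^{-1}\otimes I)\circ\Delta_{-1}\circ\alpha^{-1}.$$
Write $I=\alpha\circ\alpha^{-1}$ in the second factor. This turns the map into $(\Delta_1\otimes\alpha)\circ(\alpha^{-1}\otimes\alpha^{-1})\circ\Delta_{-1}\circ\alpha^{-1}$. By Step 1 this equals $(\Delta_1\otimes\alpha)\circ\Delta_{-1}\circ\alpha^{-2}$.

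Now apply (\ref{c2}) and reverse the manipulation on the other side:
$$(\alpha\otimes\Delta_{-1})\circ\Delta_1\circ\alpha^{-2}=(\alpha\otimes\Delta_{-1})\circ(\alpha^{-1}\otimes\alpha^{-1})\circ\Delta_1\circ\alpha^{-1}=(I\otimes\Delta'_{-1})\circ\Delta'_1.$$
The remaining identities are handled the same way. In (\ref{c1}) and (\ref{c3}), linearity lets the sum $\Delta_{-1}+\Delta_0+\Delta_1$ pass through this argument unchanged.

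\textbf{Main obstacle.} The only real content is Step 1, which transfers multiplicativity from $\alpha$ to $\alpha^{-1}$. Once that holds, every axiom follows by the identical bookkeeping: insert $\alpha\circ\alpha^{-1}$, commute $\alpha^{-1}$ past $\Delta_i$, and apply the Hom-axiom.
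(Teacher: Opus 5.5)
Your proposal is correct and takes essentially the paper's approach. The paper writes no separate proof: the corollary is meant to follow from the preceding twisting theorem with $\beta=\alpha^{-1}$, and your Step 1 is exactly the check that this substitution is legitimate. That check needs $\alpha^{-1}$ to be multiplicative, and it also trivially commutes with $\alpha$, which that theorem's proof tacitly uses. Your direct verification of (\ref{c2}) is the same computation specialized to $\beta=\alpha^{-1}$, except that you move the inverses to the right rather than the left.
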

 The below result associates a Hom-coassociative coalgebra to a given Hom-tridendriform coalgebra.
 \begin{thm}
 Let $(C, \Delta_{-1}, \Delta_0, \Delta_1, \alpha)$  be a Hom-tridendriform coalgebra. Define the comultiplication
 $\Delta : C\rightarrow C\otimes C$ as
 $$\Delta=\Delta_{-1}+\Delta_0+\Delta_1.$$
 Then $(C, \Delta, \alpha)$ is a Hom-coassociative coalgebra.
\end{thm}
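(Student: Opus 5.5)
We must show $(\alpha\otimes\Delta)\circ\Delta=(\Delta\otimes\alpha)\circ\Delta$ with $\Delta=\Delta_{-1}+\Delta_0+\Delta_1$. The plan is a direct expansion. By bilinearity of composition and tensor product, both sides split into nine terms,
$$(\Delta\otimes\alpha)\circ\Delta=\sum_{i,j\in\{-1,0,1\}}(\Delta_j\otimes\alpha)\circ\Delta_i,\qquad (\alpha\otimes\Delta)\circ\Delta=\sum_{i,j\in\{-1,0,1\}}(\alpha\otimes\Delta_j)\circ\Delta_i .$$
We then group these terms according to the seven axioms (\ref{c1})--(\ref{c7}) of Definition \ref{htca} and check that the groups match.

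On the left-hand side $(\Delta\otimes\alpha)\circ\Delta$, we proceed as follows.
\begin{itemize}
\item The three terms with $i=1$ together give $(\Delta_{-1}\otimes\alpha+\Delta_1\otimes\alpha+\Delta_0\otimes\alpha)\circ\Delta_1$. By (\ref{c3}) this equals $(\alpha\otimes\Delta_1)\circ\Delta_1$.
\item The term $(\Delta_{-1}\otimes\alpha)\circ\Delta_{-1}$ equals $(\alpha\otimes(\Delta_{-1}+\Delta_0+\Delta_1))\circ\Delta_{-1}$ by (\ref{c1}).
\item $(\Delta_1\otimes\alpha)\circ\Delta_{-1}=(\alpha\otimes\Delta_{-1})\circ\Delta_1$ by (\ref{c2}).
\item $(\Delta_0\otimes\alpha)\circ\Delta_{-1}=(\alpha\otimes\Delta_{-1})\circ\Delta_0$ by (\ref{c6}).
\item $(\Delta_{-1}\otimes\alpha)\circ\Delta_0=(\alpha\otimes\Delta_1)\circ\Delta_0$ by (\ref{c4}).
\item $(\Delta_1\otimes\alpha)\circ\Delta_0=(\alpha\otimes\Delta_0)\circ\Delta_1$ by (\ref{c5}).
\item $(\Delta_0\otimes\alpha)\circ\Delta_0=(\alpha\otimes\Delta_0)\circ\Delta_0$ by (\ref{c7}).
\end{itemize}

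Collecting the rewritten terms gives
$$(\alpha\otimes\Delta_1)\circ\Delta_1+(\alpha\otimes\Delta)\circ\Delta_{-1}+(\alpha\otimes\Delta_{-1})\circ\Delta_1+(\alpha\otimes\Delta_{-1})\circ\Delta_0+(\alpha\otimes\Delta_1)\circ\Delta_0+(\alpha\otimes\Delta_0)\circ\Delta_1+(\alpha\otimes\Delta_0)\circ\Delta_0 .$$
Regrouping by the outer map $\Delta_i$, this is
$$(\alpha\otimes\Delta)\circ\Delta_{-1}+(\alpha\otimes\Delta)\circ\Delta_0+(\alpha\otimes\Delta)\circ\Delta_1=(\alpha\otimes\Delta)\circ\Delta,$$
which is the claim.

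The main point of care is the bookkeeping. Each of the nine terms on each side must be used exactly once. In particular, the axioms (\ref{c1}) and (\ref{c3}) each absorb three terms at once, on opposite sides. The seven axioms then account for $1+1+1+1+1+3+3=\dots$ matched pieces on each side: on the left, (\ref{c3}) covers three terms and the other six axioms cover one each, giving nine; on the right, (\ref{c1}) covers three terms and the others cover one each, giving nine. I would do this count explicitly to confirm that nothing is missed or double counted.

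No multiplicativity of $\alpha$ is needed, since every axiom is used verbatim.
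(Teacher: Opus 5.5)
Your proof is correct and is the same argument as the paper's. The paper only says to expand $(\Delta\otimes\alpha)\circ\Delta-(\alpha\otimes\Delta)\circ\Delta$ in terms of $\Delta_{-1},\Delta_0,\Delta_1$ and apply (\ref{c1})--(\ref{c7}); your term-by-term matching, with each of the nine terms on each side used exactly once, fills in that sketch correctly. The only blemish is the stray sum $1+1+1+1+1+3+3$, which equals $11$ and should be deleted: the correct count is the one you give right after it, nine terms per side, with (\ref{c3}) absorbing three on the left and (\ref{c1}) absorbing three on the right.
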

 \begin{proof}
  By extending $c_\alpha(\Delta):=(\Delta\otimes\alpha)\circ\Delta-(\alpha\otimes\Delta)\circ\Delta$ by means of $\Delta_i, i=-1, 0, 1$ 
 and using axioms in Definition \ref{htca}, it follows that $(C, \Delta, \alpha)$ is a Hom-coassociative coalgebra.
 \end{proof}
 The following theorem asserts that any Hom-coassociative Rota-Baxter coalgebra give rise to Hom-tridendriform coalgebra.
\begin{thm}
  Let $(C, \Delta_0, \alpha, R)$ be a Hom-coassociative Rota Baxter coalgebra of weight $\lambda\in\mathbb{K}$.
 Let us set 
 $$ \Delta_{-1}=(I\otimes R)\circ\Delta_{0},\quad \Delta_{1}=(R\otimes I)\circ\Delta_0,\quad \bar{\Delta}_0=\lambda\Delta_0.$$
 Then, $(C, \Delta_{-1}, \bar{\Delta}_0, \Delta_1, \alpha)$ is a Hom-tridendriform coalgebra.
 \end{thm}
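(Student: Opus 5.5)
The plan is to verify the seven axioms (\ref{c1})--(\ref{c7}) of Definition \ref{htca} one at a time for $\Delta_{-1}=(I\otimes R)\circ\Delta_0$, $\Delta_1=(R\otimes I)\circ\Delta_0$ and $\bar\Delta_0=\lambda\Delta_0$. Each axiom will be reduced to the Hom-coassociativity $(\Delta_0\otimes\alpha)\circ\Delta_0=(\alpha\otimes\Delta_0)\circ\Delta_0$. The tools are:
\begin{itemize}
\item the commutation $R\circ\alpha=\alpha\circ R$;
\item the Rota-Baxter identity, read with $\Delta=\Delta_0$,
$$(R\otimes R)\circ\Delta_0=\big((R\otimes I)\circ\Delta_0+(I\otimes R)\circ\Delta_0+\lambda\Delta_0\big)\circ R.$$
\end{itemize}
The basic manipulation is to pull every occurrence of $R$ out to the left as a tensor factor $R_{(i)}$ acting on a single slot of $C^{\otimes 3}$. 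This uses $\alpha R=R\alpha$ whenever $R$ must pass through $\alpha$.

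\textbf{Axioms (\ref{c2}) and (\ref{c4})--(\ref{c7}).} These do not need the Rota-Baxter identity.
\begin{itemize}
\item For (\ref{c7}), both sides are $\lambda^2$ times the two sides of Hom-coassociativity.
\item For (\ref{c4}), both sides equal $\lambda(I\otimes R\otimes I)$ applied to $(\Delta_0\otimes\alpha)\circ\Delta_0$ and to $(\alpha\otimes\Delta_0)\circ\Delta_0$, respectively.
\item For (\ref{c5}), the left side is $\lambda(R\otimes I\otimes I)\circ(\Delta_0\otimes\alpha)\circ\Delta_0$. The right side is $\lambda(\alpha\otimes\Delta_0)\circ(R\otimes I)\circ\Delta_0=\lambda(R\otimes I\otimes I)\circ(\alpha\otimes\Delta_0)\circ\Delta_0$, using $\alpha R=R\alpha$.
\item Axiom (\ref{c6}) is the mirror case, with $I\otimes I\otimes R$ in place of $R\otimes I\otimes I$.
\item For (\ref{c2}), the left side is $\big((R\otimes I)\circ\Delta_0\otimes\alpha R\big)\circ\Delta_0=(R\otimes I\otimes R)\circ(\Delta_0\otimes\alpha)\circ\Delta_0$. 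The right side is $(R\otimes I\otimes R)\circ(\alpha\otimes\Delta_0)\circ\Delta_0$. These agree by Hom-coassociativity.
\end{itemize}

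\textbf{Axioms (\ref{c1}) and (\ref{c3}).} These are the only places where the Rota-Baxter identity enters, and they are the main step. For (\ref{c1}), write the left side as
$$(\Delta_{-1}\otimes\alpha)\circ\Delta_{-1}=\big((I\otimes R)\circ\Delta_0\otimes\alpha R\big)\circ\Delta_0=(I\otimes R\otimes R)\circ(\Delta_0\otimes\alpha)\circ\Delta_0.$$
On the right side, factor out the common $(I\otimes R)\circ\Delta_0$:
$$\big(\alpha\otimes(\Delta_{-1}+\Delta_1+\lambda\Delta_0)\big)\circ(I\otimes R)\circ\Delta_0=\Big(\alpha\otimes\big((I\otimes R)\circ\Delta_0+(R\otimes I)\circ\Delta_0+\lambda\Delta_0\big)\circ R\Big)\circ\Delta_0.$$
By the Rota-Baxter identity this equals $(\alpha\otimes(R\otimes R)\circ\Delta_0)\circ\Delta_0=(I\otimes R\otimes R)\circ(\alpha\otimes\Delta_0)\circ\Delta_0$. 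Hom-coassociativity then closes the argument.

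Axiom (\ref{c3}) is symmetric. Its right side is $\big((\Delta_{-1}+\Delta_1+\lambda\Delta_0)\circ R\otimes\alpha\big)\circ\Delta_0$, which collapses by the Rota-Baxter identity to $(R\otimes R\otimes I)\circ(\Delta_0\otimes\alpha)\circ\Delta_0$. Its left side, $(\alpha\otimes(R\otimes I)\circ\Delta_0)\circ(R\otimes I)\circ\Delta_0$, equals $(R\otimes R\otimes I)\circ(\alpha\otimes\Delta_0)\circ\Delta_0$.

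The only care needed throughout is the bookkeeping of which tensor slot each $R$ lands in, and consistently moving $R$ past $\alpha$.
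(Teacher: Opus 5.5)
Your proof is correct and takes the same route as the paper. The paper checks only (\ref{c2}) explicitly, pulling $R$ out slot by slot via $R\circ\alpha=\alpha\circ R$ and then applying Hom-coassociativity exactly as you do, while you also supply the omitted cases and correctly identify (\ref{c1}) and (\ref{c3}) as the only axioms needing the Rota-Baxter identity $(\Delta_{-1}+\Delta_1+\lambda\Delta_0)\circ R=(R\otimes R)\circ\Delta_0$.
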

 \begin{proof}
  Here also, we only prove (\ref{c2}). We have 
 \begin{eqnarray}
  (\Delta\otimes\alpha)\circ\Delta_{-1}
 &=&((R\otimes I)\circ\Delta_0\otimes\alpha)\circ((I\otimes R)\circ\Delta_0)\nonumber\\
 &=&((R\otimes I)\circ\Delta_0\otimes\alpha\circ R)\circ\Delta_0\nonumber\\
 &=&((R\otimes I)\circ\Delta_0\otimes R\circ\alpha)\circ\Delta_0\nonumber\\
 &=&((R\otimes I\otimes R)\circ(\Delta_0\otimes\alpha)\circ\Delta_0\nonumber\\
 &=&((R\otimes I\otimes R)\circ(\alpha\otimes\Delta_0)\circ\Delta_0\nonumber\\
 &=&(R\circ\alpha\otimes(I\otimes R)\circ\Delta_0)\circ\Delta_0\nonumber\\
 &=&(\alpha\circ R\otimes \Delta_{-1})\circ\Delta_0\nonumber\\
 &=&(\alpha\otimes\Delta_{-1})\circ(R\otimes I)\circ\Delta_0\nonumber\\
 &=&(\alpha\otimes\Delta_{-1})\circ\Delta_1.\nonumber
 \end{eqnarray}
 Similarly we prove the other relations.
 \end{proof}
 
 \begin{cor}
  Let $(C, \Delta_0, \alpha, R)$ be a Hom-coassociative Rota Baxter coalgebra of weight $\lambda\in\mathbb{K}$. Then,
 $(C, \bar\Delta_{-1}, \bar\Delta_1, \alpha)$ is a Hom-dendriform coalgebra, where
 $$\bar\Delta_{-1}=\Delta_{-1}+\lambda\Delta_{0}\quad\mbox{and}\quad \bar\Delta_{1}=\Delta_{1}.$$
 \end{cor}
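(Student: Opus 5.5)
The plan is to derive this from the preceding theorem rather than recompute everything from the Rota-Baxter identity. That theorem already shows that $(C, \Delta_{-1}, \bar\Delta_0, \Delta_1, \alpha)$, with $\bar\Delta_0=\lambda\Delta_0$, is a Hom-tridendriform coalgebra. So it suffices to prove a general fact: if $(C, \Delta_{-1}, \Delta_0, \Delta_1, \alpha)$ is any Hom-tridendriform coalgebra, then $(C, \Delta_{-1}+\Delta_0, \Delta_1, \alpha)$ is a Hom-dendriform coalgebra. Applying this with $\bar\Delta_0$ in place of $\Delta_0$ gives exactly $\bar\Delta_{-1}=\Delta_{-1}+\lambda\Delta_0$ and $\bar\Delta_1=\Delta_1$.

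By the remark after Definition \ref{htca}, the Hom-dendriform axioms are (\ref{c1})--(\ref{c3}) with the middle comultiplication set to zero. Write $L=\Delta_{-1}+\Delta_0$ and $\Delta=\Delta_{-1}+\Delta_0+\Delta_1=L+\Delta_1$. The three identities to check are
$$(L\otimes\alpha)\circ L=(\alpha\otimes\Delta)\circ L,\qquad (\Delta_1\otimes\alpha)\circ L=(\alpha\otimes L)\circ\Delta_1,\qquad (\alpha\otimes\Delta_1)\circ\Delta_1=(\Delta\otimes\alpha)\circ\Delta_1 .$$

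The third identity is literally (\ref{c3}). For the second, I expand the left side as $(\Delta_1\otimes\alpha)\circ\Delta_{-1}+(\Delta_1\otimes\alpha)\circ\Delta_0$. I then rewrite the first term by (\ref{c2}) and the second by (\ref{c5}), obtaining $(\alpha\otimes\Delta_{-1})\circ\Delta_1+(\alpha\otimes\Delta_0)\circ\Delta_1=(\alpha\otimes L)\circ\Delta_1$.

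The first identity is the only one needing some bookkeeping. Expanding $(L\otimes\alpha)\circ L$ gives four terms, which I rewrite as follows:
\begin{itemize}
\item $(\Delta_{-1}\otimes\alpha)\circ\Delta_{-1}=(\alpha\otimes\Delta)\circ\Delta_{-1}$ by (\ref{c1});
\item $(\Delta_0\otimes\alpha)\circ\Delta_{-1}=(\alpha\otimes\Delta_{-1})\circ\Delta_0$ by (\ref{c6});
\item $(\Delta_{-1}\otimes\alpha)\circ\Delta_0=(\alpha\otimes\Delta_1)\circ\Delta_0$ by (\ref{c4});
\item $(\Delta_0\otimes\alpha)\circ\Delta_0=(\alpha\otimes\Delta_0)\circ\Delta_0$ by (\ref{c7}).
\end{itemize}
The last three sum to $(\alpha\otimes\Delta)\circ\Delta_0$. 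Adding the first gives $(\alpha\otimes\Delta)\circ L$, as required.

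Since the Hom-tridendriform axioms are linear in $\Delta_0$, I will note explicitly that the argument applies verbatim with $\bar\Delta_0=\lambda\Delta_0$. That is the structure supplied by the previous theorem, so it is what we must use. The main obstacle is only this bookkeeping in the first identity: each of the four cross terms must be matched to the correct axiom among (\ref{c1}), (\ref{c4}), (\ref{c6}), (\ref{c7}).
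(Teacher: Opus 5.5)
Your proposal is correct and follows the route the paper intends. The paper states the corollary without proof as a consequence of the preceding theorem, and you supply the omitted step: merging the middle comultiplication into $\Delta_{-1}$ turns any Hom-tridendriform coalgebra into a Hom-dendriform one, and you match each cross term to the right axiom among (\ref{c1})--(\ref{c7}).

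One wording slip: the axioms are not linear in $\Delta_0$, since (\ref{c7}) is quadratic. You never need this, though, because your general fact is applied directly to the Hom-tridendriform coalgebra $(C,\Delta_{-1},\bar\Delta_0,\Delta_1,\alpha)$, with $\bar\Delta_0=\lambda\Delta_0$ as its middle comultiplication.
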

 
 Now let $\mathcal{S}_3$ be the symmetric group of order $3$. Given $\sigma\in\mathcal{S}_3$, we define a linear map 
 $$\Phi_\sigma : C^{\otimes3}\rightarrow C^{\otimes3}$$
 by
 $$\Phi_\sigma(x_1\otimes x_2\otimes x_3)=x_{\sigma^{-1}(1)}\otimes x_{\sigma^{-1}(2)}\otimes x_{\sigma^{-1}(3)}.$$
  The following serie of propositions give the connections of Hom-dendriform and Hom-coassociative Rota-Baxter coalgebras to
 Hom-preLie coalgebras.
 \begin{prop}
  Let $(C, \Delta_{-1}, \Delta_1, \alpha)$ be a Hom-dendriform coalgebra. Define the linear map
 $$\Delta=\Delta_1-\tau\circ\Delta_{-1} : C\rightarrow C\otimes C.$$
 Then, $(C, \Delta, \alpha)$ is a Hom-preLie coalgebra.
 \end{prop}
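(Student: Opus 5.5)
The plan is to mimic the classical algebra argument: if $x\circ y=x\succ y-y\prec x$ on a dendriform algebra, then the associator of $\circ$ is left-symmetric. Everything will be dualized through the permutation operators $\Phi_\sigma$ just introduced.

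Since a Hom-dendriform coalgebra is the case $\Delta_0=0$ of Definition \ref{htca}, the only axioms available are
\begin{eqnarray*}
(\Delta_{-1}\otimes\alpha)\circ\Delta_{-1}&=&(\alpha\otimes\Delta_{-1}+\alpha\otimes\Delta_1)\circ\Delta_{-1},\\
(\Delta_1\otimes\alpha)\circ\Delta_{-1}&=&(\alpha\otimes\Delta_{-1})\circ\Delta_1,\\
(\alpha\otimes\Delta_1)\circ\Delta_1&=&(\Delta_{-1}\otimes\alpha+\Delta_1\otimes\alpha)\circ\Delta_1 .
\end{eqnarray*}
Set $A(\Delta)=(\Delta\otimes\alpha)\circ\Delta-(\alpha\otimes\Delta)\circ\Delta$ with $\Delta=\Delta_1-\tau\circ\Delta_{-1}$. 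We must show $A(\Delta)=(\tau\otimes I)\circ A(\Delta)$.

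First I expand $A(\Delta)$ bilinearly into eight terms, four from each half. Each term has the shape $(\pm)(X\otimes\alpha)\circ Y$ or $(\pm)(\alpha\otimes X)\circ Y$, with $X,Y\in\{\Delta_1,\tau\circ\Delta_{-1}\}$.

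Next I move every $\tau$ to the far left using the elementary rules
\[
(f\otimes g)\circ\tau=\tau\circ(g\otimes f),\qquad (\tau\otimes I)\ \text{and}\ (I\otimes\tau)\ \text{are}\ \Phi_\sigma\ \text{for transpositions}\ \sigma.
\]
For a map $f: C\to C\otimes C$ I also use $(f\otimes\alpha)\circ\tau=\Phi_\sigma\circ(\alpha\otimes f)$ for a suitable $3$-cycle $\sigma$, and $(\alpha\otimes f)\circ\tau=\Phi_{\sigma^{-1}}\circ(f\otimes\alpha)$. After this step each of the eight terms reads $\pm\Phi_\sigma\circ B$, where $B$ is one of the six basic compositions
\[
(\Delta_i\otimes\alpha)\circ\Delta_j,\ (\alpha\otimes\Delta_i)\circ\Delta_j,\qquad i,j\in\{-1,1\},
\]
that occur in the three axioms above.

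I then apply the axioms to substitute for one side of each. Concretely, I eliminate $(\Delta_{-1}\otimes\alpha)\circ\Delta_{-1}$, $(\Delta_1\otimes\alpha)\circ\Delta_{-1}$ and $(\alpha\otimes\Delta_1)\circ\Delta_1$. This leaves $A(\Delta)$ as a combination $\sum\pm\Phi_\sigma\circ B'$ with the $B'$ drawn from a reduced list of independent compositions. In the algebra case the outcome is
\[
(x,y,z)_\circ=(x\prec y)\succ z\ \text{-type terms}+\cdots,
\]
and the terms not symmetric in $x,y$ cancel. Dually, I expect the surviving terms to group into pairs $\Phi_\sigma\circ B'+\Phi_{(12)\sigma}\circ B'$, together with terms already fixed by $\tau\otimes I$. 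Each such group is visibly invariant under left composition with $\tau\otimes I=\Phi_{(12)}$, which gives the identity.

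The main obstacle is the permutation bookkeeping in the second step. One must get each $\sigma$ exactly right, including the convention $\Phi_\sigma(x_1\otimes x_2\otimes x_3)=x_{\sigma^{-1}(1)}\otimes\cdots$. To control this, I would first record a small table of the eight terms with their $\sigma$, and check it in Sweedler-type notation $\Delta_{\pm1}(x)=x_{(1)}^{\pm}\otimes x_{(2)}^{\pm}$. I would also cross-check by dualizing the known algebra computation, valid in finite dimensions by the preceding Proposition on duals: transposing the pre-Lie identity for $x\succ y-y\prec x$ must produce exactly the same pattern of $\Phi_\sigma$'s. Note that no multiplicativity of $\alpha$ is needed, since $\alpha$ only ever appears in the untouched tensor slot.
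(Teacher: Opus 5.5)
Your proposal is correct and follows essentially the same route as the paper: expand $(\Delta\otimes\alpha)\circ\Delta-(\alpha\otimes\Delta)\circ\Delta$ into eight terms $\pm\Phi_\sigma\circ B$, substitute via the three dendriform axioms, and check invariance under $\tau\otimes I$. The bookkeeping closes as you predict. After your three eliminations the $(\Delta_1\otimes\alpha)\circ\Delta_1$ terms cancel, and the $(\alpha\otimes\Delta_{-1})\circ\Delta_{-1}$ terms cancel because $(\tau\otimes I)\circ\xi=(I\otimes\tau)\circ\xi^2$. What remains is $(1+\tau\otimes I)\circ\big(-(\Delta_{-1}\otimes\alpha)\circ\Delta_1-\xi\circ(\alpha\otimes\Delta_1)\circ\Delta_{-1}+(I\otimes\tau)\circ(\alpha\otimes\Delta_{-1})\circ\Delta_1\big)$, which is visibly fixed by $\tau\otimes I$. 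One small slip: there are eight basic compositions $(\Delta_i\otimes\alpha)\circ\Delta_j$, $(\alpha\otimes\Delta_i)\circ\Delta_j$, not six, and all eight appear in the axioms.
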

 \begin{proof}
 On the one hand we have
 \begin{eqnarray}
  (\Delta\otimes\alpha)\circ\Delta
  &=&((\Delta_1-\tau\circ\Delta_{-1})\otimes\alpha)\circ\Delta_1
 -(\alpha\otimes(\Delta_1-\tau\circ\Delta_{-1}))\circ\Delta_{-1}\nonumber\\
 &=&(\Delta_1\otimes\alpha)\circ\Delta_1-(\tau\circ\Delta_{-1}\otimes\alpha)\circ\Delta_1
 -\Phi_{(231)}\circ(\alpha\otimes\Delta_1)\circ\Delta_{-1}\nonumber\\
&&+\Phi_{(321)}\circ(\alpha\otimes\Delta_{-1})\circ\Delta_{-1}.\nonumber
 \end{eqnarray}
 On the other hand, 
 \begin{eqnarray}
  (\alpha\otimes\Delta)\circ\Delta
 &=&(\alpha\otimes(\Delta_1-\Delta_{-1}\circ\tau))\circ\Delta_1
 -\Phi_{(312)}\circ((\Delta_1-\Delta_{-1}\circ\tau)\otimes\alpha)\circ\Delta_{-1}\nonumber\\
 &=&(\alpha\otimes(\Delta_1)\circ\Delta_1-(\alpha\otimes(\Delta_{-1}\circ\tau))\circ\Delta_1
 -\Phi_{(312)}\circ(\Delta_1\otimes\alpha)\circ\Delta_{-1}\nonumber\\
&&+\Phi_{(312)}\circ(\Delta_{-1}\circ\tau\otimes\alpha)\circ\Delta_{-1}\nonumber\\
 &=&(\Delta_{-1}\otimes\alpha)\circ\Delta_{1}+(\Delta_{1}\otimes\alpha)\circ\Delta_{1}
 -(\alpha\otimes(\Delta_{-1}\circ\tau))\circ\Delta_1\nonumber\\
 &&-\Phi_{(312)}\circ(\Delta_1\otimes\alpha)\circ\Delta_{-1}+\Phi_{(321)}\circ(\alpha\otimes\Delta_{-1})\circ\Delta_{-1}\nonumber\\
 &&+\Phi_{(321)}\circ(\alpha\otimes\Delta_{1})\circ\Delta_{-1}.\quad(\mbox{by}\;(\ref{c3})\;\mbox{and}\;(\ref{c1}))\nonumber
 \end{eqnarray}
 Thus, using \ref{c2}, one has
 \begin{eqnarray}
 c_\alpha(\Delta)&=&-(\Delta_{-1}\circ\tau\otimes)\circ\Delta_{1}-\Phi_{(321)}\circ(\alpha\otimes\Delta_{1})\circ\Delta_{-1}
 -(\Delta_{-1}\otimes\alpha)\circ\Delta_{1}\nonumber\\
&&+\Phi_{(213)}\circ(\Delta_{1}\otimes\alpha)\circ\Delta_{1}+\Phi_{(312)}\circ(\Delta_{1}\otimes\alpha)\circ\Delta_{-1}\nonumber\\
 &&-\Phi_{(321)}\circ(\alpha\otimes\Delta_{1})\circ\Delta_{-1}\nonumber.
 \end{eqnarray}
 It follows that 
 \begin{eqnarray}
  c_\alpha(\Delta)-\Phi_{(213)}\circ c_\alpha(\Delta)=0,\nonumber
 \end{eqnarray}
 by axioms in Definition \ref{htca}, where  $c_\alpha(\Delta)=(\alpha\otimes\Delta_\cdot)\circ\Delta_\cdot-(\Delta_\cdot\otimes\alpha)\circ\Delta_\cdot.$
 \end{proof}
 The following two propositions are proved as the previous one.
 \begin{prop}
  Let $(C, \Delta_0, \alpha, R)$ be a Hom-coassociative Rota Baxter coalgebra of weight $0$.
Then, $(C, \Delta, \alpha)$ is a Hom-preLie coalgebra, where
 $$\Delta=(R\otimes I)\circ\Delta_0-\tau\circ(I\otimes R)\circ\Delta_0.$$
 \end{prop}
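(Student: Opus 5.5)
The plan is to reduce to the previous proposition. By the preceding theorem with $\lambda=0$, setting $\Delta_{-1}=(I\otimes R)\circ\Delta_0$, $\Delta_1=(R\otimes I)\circ\Delta_0$ and $\bar\Delta_0=0$ makes $(C,\Delta_{-1},0,\Delta_1,\alpha)$ a Hom-tridendriform coalgebra. By the remark following Definition \ref{htca}, the case $\bar\Delta_0=0$ is exactly a Hom-dendriform coalgebra $(C,\Delta_{-1},\Delta_1,\alpha)$; equivalently, this is the preceding corollary with $\lambda=0$. Applying the previous proposition to it, the map
$$\Delta_1-\tau\circ\Delta_{-1}=(R\otimes I)\circ\Delta_0-\tau\circ(I\otimes R)\circ\Delta_0$$
endows $C$ with a Hom-preLie coalgebra structure. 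That is precisely the statement.

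The only genuine work lies in the preceding theorem, whose proof in the paper checks only (\ref{c2}). I would therefore verify the remaining axioms (\ref{c1}) and (\ref{c3}) in weight $0$; axioms (\ref{c4})--(\ref{c7}) are trivial because $\bar\Delta_0=0$.

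For (\ref{c1}), the left side is
$$(\Delta_{-1}\otimes\alpha)\circ\Delta_{-1}=(I\otimes R\otimes \alpha R)\circ(\Delta_0\otimes I)\circ\Delta_0.$$
Using $\alpha R=R\alpha$ and Hom-coassociativity, this becomes
$$(I\otimes R\otimes R)\circ(\alpha\otimes\Delta_0)\circ\Delta_0=(\alpha\otimes (R\otimes R)\circ\Delta_0)\circ\Delta_0.$$
The right side is
$$(\alpha\otimes(\Delta_{-1}+\Delta_1))\circ(I\otimes R)\circ\Delta_0=\Big(\alpha\otimes\big((I\otimes R)\circ\Delta_0+(R\otimes I)\circ\Delta_0\big)\circ R\Big)\circ\Delta_0.$$
The two sides agree by the Rota--Baxter identity $(R\otimes R)\circ\Delta_0=((R\otimes I)\circ\Delta_0+(I\otimes R)\circ\Delta_0)\circ R$ in weight $0$. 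Axiom (\ref{c3}) is the mirror computation, with $R$ placed on the left tensor factor.

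The main obstacle is not conceptual but bookkeeping: making sure the operator $R$ commutes past $\alpha$ in the correct tensor slot, and that the Rota--Baxter identity is applied to the inner $\Delta_0$ rather than the outer one. Alternatively, one could verify the Hom-preLie identity directly. In that approach one expands $c_\alpha(\Delta)$ into eight terms with $\Phi_\sigma$ permutations, exactly as in the previous proof, and checks that $c_\alpha(\Delta)-\Phi_{(213)}\circ c_\alpha(\Delta)=0$ using the Rota--Baxter identity and Hom-coassociativity. This is what ``proved as the previous one'' suggests, but routing through the dendriform structure avoids redoing that permutation computation.
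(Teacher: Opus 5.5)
Your proof is correct. It takes a somewhat different route from the paper, whose only argument for this proposition is that it is ``proved as the previous one''. Read literally, that means expanding $(\Delta\otimes\alpha)\circ\Delta-(\alpha\otimes\Delta)\circ\Delta$ directly for $\Delta=(R\otimes I)\circ\Delta_0-\tau\circ(I\otimes R)\circ\Delta_0$ and checking symmetry under $\tau\otimes I$. In that computation, Hom-coassociativity and the Rota--Baxter identity play the role of the dendriform axioms. You instead factor the statement through two earlier results: the $\lambda=0$ case of the Rota--Baxter-to-tridendriform theorem (equivalently, the corollary at $\lambda=0$), followed by the dendriform-to-pre-Lie proposition.

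Your factorization has two advantages:
\begin{itemize}
\item the permutation bookkeeping is done once, not repeated with $R$'s inserted;
\item it shows exactly where each hypothesis enters: the weight-$0$ Rota--Baxter identity only in (\ref{c1}) and (\ref{c3}), Hom-coassociativity and $R\circ\alpha=\alpha\circ R$ alone in (\ref{c2}), while (\ref{c4})--(\ref{c7}) are vacuous.
\end{itemize}
Your checks of (\ref{c1}) and (\ref{c3}) are correct, and they supply what the paper leaves as ``similarly''. The direct route, by contrast, is self-contained and does not rely on another proof.

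The one thing your route inherits is the dendriform-to-pre-Lie proposition, whose written proof in the paper has several misprints. Its statement is nevertheless sound and needs no multiplicativity of $\alpha$. Take $\Delta=\Delta_1-\tau\circ\Delta_{-1}$, use (\ref{c1})--(\ref{c3}) with $\Delta_0=0$, and use $\xi^2=(\tau\otimes I)\circ(I\otimes\tau)$. Then one gets
\[
(\Delta\otimes\alpha)\circ\Delta-(\alpha\otimes\Delta)\circ\Delta=(1+\tau\otimes I)\circ\Big((I\otimes\tau)\circ(\Delta_1\otimes\alpha)\circ\Delta_{-1}-(\Delta_{-1}\otimes\alpha)\circ\Delta_1-\xi\circ(\alpha\otimes\Delta_1)\circ\Delta_{-1}\Big).
\]
This is fixed by $\tau\otimes I$, because $(\tau\otimes I)\circ(1+\tau\otimes I)=1+\tau\otimes I$. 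With this in hand, your reduction is complete.
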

 
 \begin{prop}
   Let $(C, \Delta_0, \alpha, R)$ be a Hom-coassociative Rota Baxter coalgebra of weight $-1$.
 Define the linea map $\Delta : C\rightarrow C\otimes C$ by
 $$\Delta=(R\otimes I)\circ\Delta_0-\tau\circ(I\otimes R)\circ\Delta_0-\Delta_0 .$$
 Then, $(C, \Delta, \alpha)$ is a Hom-preLie coalgebra.
 \end{prop}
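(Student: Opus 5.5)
The plan is to reduce to the preceding results instead of expanding $c_\alpha(\Delta)$ by hand. Set $\Delta_{-1}=(I\otimes R)\circ\Delta_0$, $\Delta_1=(R\otimes I)\circ\Delta_0$ and $\bar\Delta_0=\lambda\Delta_0$ with $\lambda=-1$. By the theorem on Hom-coassociative Rota--Baxter coalgebras, $(C,\Delta_{-1},\bar\Delta_0,\Delta_1,\alpha)$ is a Hom-tridendriform coalgebra. The corollary that follows it then gives a Hom-dendriform coalgebra $(C,\bar\Delta_{-1},\bar\Delta_1,\alpha)$, with
$$\bar\Delta_{-1}=(I\otimes R)\circ\Delta_0-\Delta_0,\qquad \bar\Delta_1=(R\otimes I)\circ\Delta_0.$$
Applying the first proposition of this series to this Hom-dendriform coalgebra produces the Hom-preLie coalgebra
$$\bar\Delta_1-\tau\circ\bar\Delta_{-1}=(R\otimes I)\circ\Delta_0-\tau\circ(I\otimes R)\circ\Delta_0+\tau\circ\Delta_0 .$$

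This differs from the stated $\Delta$ in the last term: it has $+\tau\circ\Delta_0$ where the statement has $-\Delta_0$. The key step is therefore to reconcile the two, and I see two options.

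\emph{Option one: dendriform $op$.} Use the Hom-dendriform analogue of the $T^{op}$ example, i.e.\ the case $\Delta_0=0$. This swaps the roles of the two coproducts: $\Delta_{-1}^{op}=\tau\circ\bar\Delta_1$ and $\Delta_1^{op}=\tau\circ\bar\Delta_{-1}$. The resulting preLie coproduct has the form $\tau\circ(\text{stuff})$, which again does not match the stated $\Delta$ directly.

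\emph{Option two: the dual algebra picture.} The classical fact is that $x\circ y=R(x)y-yR(x)-xy$ is preLie for weight $-1$; this is the sum of the two dendriform products minus the commutator-type term. Dualizing, this suggests regrouping the Hom-dendriform structure as $\Delta'_1=(R\otimes I)\circ\Delta_0-\Delta_0$ and $\Delta'_{-1}=(I\otimes R)\circ\Delta_0$. Then
$$\Delta'_1-\tau\circ\Delta'_{-1}=(R\otimes I)\circ\Delta_0-\tau\circ(I\otimes R)\circ\Delta_0-\Delta_0,$$
which is exactly the stated $\Delta$. It then remains to check that $(C,\Delta'_{-1},\Delta'_1,\alpha)$ is Hom-dendriform, i.e.\ that it satisfies axioms (\ref{c1})--(\ref{c3}) with $\Delta_0$ absorbed into $\Delta_1$ rather than into $\Delta_{-1}$.

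That check is the main obstacle. Absorbing the tridendriform middle coproduct into the right coproduct works symmetrically to the corollary's absorption into the left one. Axiom (\ref{c1}) for $\Delta'_{-1}$ uses (\ref{c1}) and (\ref{c6}). Axiom (\ref{c2}) uses (\ref{c2}) and (\ref{c6}). Axiom (\ref{c3}) uses (\ref{c3}), (\ref{c4}), (\ref{c5}) and (\ref{c7}). In each case the added $\bar\Delta_0$ terms pair off exactly through (\ref{c4})--(\ref{c7}).

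If any pairing fails, the fallback is to verify the Hom-preLie identity directly. Expand $c_\alpha(\Delta)-\Phi_{(213)}\circ c_\alpha(\Delta)$ into the nine cross terms built from $(R\otimes I)\Delta_0$, $\tau(I\otimes R)\Delta_0$ and $\Delta_0$. Apply the weight $-1$ Rota--Baxter identity to replace each $(R\otimes R)\circ\Delta_0$ by $\big((R\otimes I)\circ\Delta_0+(I\otimes R)\circ\Delta_0-\Delta_0\big)\circ R$, and use $R\alpha=\alpha R$ together with Hom-coassociativity of $\Delta_0$ to cancel. This mirrors the proof of the Hom-dendriform-to-preLie proposition.
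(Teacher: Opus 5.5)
Your Option two is correct, and it takes a genuinely different route from the paper. The paper gives no separate argument for this statement: it says it is ``proved as the previous one''. That means a direct expansion of $c_\alpha(\Delta)-\Phi_{(213)}\circ c_\alpha(\Delta)$ using the weight $-1$ Rota--Baxter identity, $R\circ\alpha=\alpha\circ R$ and Hom-coassociativity, which is essentially your fallback.

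You instead factor the statement through results already in the paper:
\begin{enumerate}
\item the Rota--Baxter theorem gives a Hom-tridendriform coalgebra with middle coproduct $\bar\Delta_0=-\Delta_0$;
\item a mirror image of the Corollary, absorbing $\bar\Delta_0$ into $\Delta_1$ rather than into $\Delta_{-1}$, gives a Hom-dendriform coalgebra;
\item the dendriform-to-preLie proposition finishes.
\end{enumerate}

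You are right that the Corollary as printed only yields $(R\otimes I)\circ\Delta_0-\tau\circ(I\otimes R)\circ\Delta_0+\tau\circ\Delta_0$, so the mirror version is really needed. Your check of it goes through. Take $\Delta'_{-1}=\Delta_{-1}$ and $\Delta'_1=\Delta_1+\bar\Delta_0$. The first dendriform axiom is literally (\ref{c1}), because $\Delta'_{-1}+\Delta'_1=\Delta_{-1}+\Delta_1+\bar\Delta_0$; so (\ref{c6}) is not needed there, contrary to what you wrote. The second follows from (\ref{c2}) and (\ref{c6}). The third follows from (\ref{c3}), (\ref{c4}), (\ref{c5}) and (\ref{c7}).

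What your route buys:
\begin{itemize}
\item no Sweedler bookkeeping is required;
\item the weight visibly enters only through $\bar\Delta_0=\lambda\Delta_0$;
\item the same argument gives, for every weight $\lambda$, the Hom-preLie coproduct $(R\otimes I)\circ\Delta_0-\tau\circ(I\otimes R)\circ\Delta_0+\lambda\Delta_0$, which covers the paper's weight-$0$ proposition too;
\item the printed Corollary gives the companion coproduct $(R\otimes I)\circ\Delta_0-\tau\circ(I\otimes R)\circ\Delta_0-\lambda\,\tau\circ\Delta_0$.
\end{itemize}

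The cost is that your proof inherits its rigor from the Rota--Baxter theorem and the dendriform-to-preLie proposition, whose proofs in the paper are only sketched. Both are true: they are duals of the standard algebra facts, and the axioms left unchecked verify in the same way. A complete write-up should say so, whereas the direct computation is self-contained. Option one and the fallback can be dropped.
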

 \subsection{Hom-tridendriform comodules}
 \begin{defn}\label{}
 A Hom-tridendriform comodule ove a tridendriform coalgebra $(C, \Delta_{-1}, \Delta_{0}, \Delta_1, \alpha)$ is a Hom-module $(M, \alpha_M)$
  equipped with three linear maps 
 $\Delta_{-1, M}, \Delta_{0, M}, \Delta_{1, M} : C\rightarrow C\otimes M$ satisfing the  following relations :
  \begin{eqnarray}
   (\Delta_{-1}\otimes\alpha_M)\circ\Delta_{-1, M}&=&(\alpha\otimes\Delta_{-1, M}+\alpha\otimes\Delta_{0, M}
 +\alpha\otimes\Delta_{1, M})\circ\Delta_{-1, M},\\
 (\Delta_1\otimes\alpha_M)\circ\Delta_{-1, M}&=&(\alpha\otimes\Delta_{-1, M})\circ\Delta_{1, M},\label{cc2}\\
 \qquad(\alpha\otimes\Delta_{1, M})\circ\Delta_{1, M}&=&(\Delta_{-1}\otimes\alpha_M+\Delta_1\otimes\alpha_M+\Delta_0\otimes\alpha_M)\circ\Delta_{1, M},\\
 (\Delta_{-1}\otimes\alpha_M)\circ\Delta_{0, M}&=&(\alpha\otimes\Delta_{1, M})\circ\Delta_{0, M},\\
 (\Delta_1\otimes\alpha_M)\circ\Delta_{0, M}&=&(\alpha\otimes\Delta_{0, M})\circ\Delta_{1, M},\\
 (\Delta_0\otimes\alpha_M)\circ\Delta_{-1, M}&=&(\alpha\otimes\Delta_{-1, M})\circ\Delta_{0, M},\label{cc6}\\
 (\Delta_0\otimes\alpha_M)\circ\Delta_{0, M}&=&(\alpha\otimes\Delta_{0, M})\circ\Delta_{0, M}.
  \end{eqnarray}
 \end{defn}
 \begin{exam}
  Taking $M=C$ as vector space, $\Delta_{-1, M}=\Delta_{-1}$, $\Delta_{0, M}=\Delta_{0}$, $\Delta_{0, M}=\Delta_{0}$ and 
$\alpha_M=\alpha$, we see that a Hom-tridendriform coalgebra is a comodule over itself.
 \end{exam}

 \begin{thm}
  Let $(C, \Delta_{-1, M}, \Delta_{0, M}, \Delta_{1, M}, \alpha_M)$ be a comodule over the Hom-tridendriform coalgebra
 $(C, \Delta_{-1}, \Delta_0, \Delta_1, \alpha)$.
 Define the linear maps
 $$\Delta_{i, M}^{n, 0}=(\alpha^n\otimes Id_M)\circ\Delta_{i, M} : M\rightarrow C\otimes M, \quad i=-1; 0; 1.$$
 Then, $(M, \Delta_{-1, M}^{n, 0}, \Delta_{0, M}^{n, 0}, \Delta_{1, M}^{n, 0}, \alpha_M)$ is a $C$-comodule.
 \end{thm}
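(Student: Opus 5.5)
The plan is to check the seven comodule axioms one at a time. Each one reduces to the corresponding axiom for the original comodule $(M,\Delta_{-1,M},\Delta_{0,M},\Delta_{1,M},\alpha_M)$, conjugated by the operator $\alpha^n\otimes\alpha^n\otimes Id_M$ on $C\otimes C\otimes M$. The essential input is that $\alpha$ commutes with the comultiplications, i.e.
$$\Delta_i\circ\alpha=(\alpha\otimes\alpha)\circ\Delta_i,\qquad i=-1,0,1.$$
The statement does not say this explicitly, so I will assume $C$ is multiplicative in the sense of the definition above, as in the analogous twisting results of the paper. Iterating gives $\Delta_i\circ\alpha^n=(\alpha^n\otimes\alpha^n)\circ\Delta_i$. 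The structure maps $\Delta_i$ of $C$ and the map $\alpha_M$ stay unchanged; only the coactions are twisted.

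As a model, I first treat (\ref{cc2}). On the left-hand side,
$$(\Delta_1\otimes\alpha_M)\circ(\alpha^n\otimes Id_M)\circ\Delta_{-1,M}=(\Delta_1\circ\alpha^n\otimes\alpha_M)\circ\Delta_{-1,M}=(\alpha^n\otimes\alpha^n\otimes Id_M)\circ(\Delta_1\otimes\alpha_M)\circ\Delta_{-1,M}.$$
On the right-hand side,
$$(\alpha\otimes(\alpha^n\otimes Id_M)\circ\Delta_{-1,M})\circ(\alpha^n\otimes Id_M)\circ\Delta_{1,M}=(\alpha^n\otimes\alpha^n\otimes Id_M)\circ(\alpha\otimes\Delta_{-1,M})\circ\Delta_{1,M},$$
using only that $\alpha$ commutes with $\alpha^n$. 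The two sides then agree by (\ref{cc2}) for the original comodule.

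The remaining axioms split into two shapes. In those with $\Delta_j\otimes\alpha_M$ applied after a twisted coaction, I push $\alpha^n$ through $\Delta_j$ by multiplicativity. In those with $\alpha\otimes\Delta^{n,0}_{j,M}$ applied after a twisted coaction, I only need $\alpha\alpha^n=\alpha^n\alpha$ and the tensor interchange $(f\otimes g)\circ(h\otimes k)=fh\otimes gk$. For the first and third axioms, whose right-hand sides are sums of three terms, I use linearity so that the common factor $\alpha^n\otimes\alpha^n\otimes Id_M$ comes out of each summand. In every case both sides become $(\alpha^n\otimes\alpha^n\otimes Id_M)$ applied to the two sides of the original axiom, and the result follows.

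The only real obstacle is bookkeeping: making sure the $M$-slot carries exactly one $\alpha_M$ on both sides. That $\alpha_M$ is never touched by the twist, so the factor $Id_M$ in $\alpha^n\otimes\alpha^n\otimes Id_M$ is legitimate. The key check is that each of the two $C$-slots receives exactly one $\alpha^n$. On the $\Delta_j$ side this is where multiplicativity is indispensable, and it is the hypothesis I would flag or add explicitly to the statement.
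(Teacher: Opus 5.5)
Your proposal is correct and follows the paper's argument. The paper checks only (\ref{cc6}), rewriting both sides as $(\alpha^n\otimes\alpha^n\otimes Id_M)$ applied to the original axiom, exactly as you do for (\ref{cc2}). Your flagged hypothesis is also well placed: the paper's step $(\Delta_0\circ\alpha^n\otimes\alpha_M)\circ\Delta_{-1,M}=(\alpha^n\otimes\alpha^n\otimes Id_M)\circ(\Delta_0\otimes\alpha_M)\circ\Delta_{-1,M}$ silently uses the multiplicativity of $C$, which the statement omits.
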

 \begin{proof}
 We only prove (\ref{cc6}). We have
  \begin{eqnarray}
   (\Delta_0\otimes\alpha_M)\circ\Delta_{-1, M}^{n, 0}
 &=&(\Delta_0\otimes\alpha_M)\circ(\alpha^n\otimes Id_M)\circ\Delta_{-1, M}\nonumber\\
 &=&(\Delta_0\circ\alpha^n\otimes\alpha_M)\circ\Delta_{-1, M}\nonumber\\
 &=&(\alpha^n\otimes\alpha^n\otimes Id_M)\circ(\Delta_0\otimes\alpha_M)\circ\Delta_{-1, M}\nonumber\\
 &=&(\alpha^n\otimes\alpha^n\otimes Id_M)\circ(\alpha\otimes\Delta_{-1, M})\circ\Delta_{0, M}\nonumber\\
 &=&(\alpha^{n+1}\otimes(\alpha^n\otimes Id_M)\circ\Delta_{-1, M})\circ\Delta_{0, M}\nonumber\\
 &=&(\alpha\otimes\Delta_{-1, M}^{n, 0})\circ\Delta_{0, M}^{n, 0}.\nonumber
 \end{eqnarray}
 The proofs of the other axioms are similar.
 \end{proof}
 
 \begin{rem}
 We can prove by a straightforward calculation that if $(M, \Delta_{-1, M}, \Delta_{0, M}$, $ \Delta_{1, M}, \alpha_M)$ is a $C$-comodule 
 with $(C, \Delta_{-1}, \Delta_0, \Delta_1, \alpha)$ a Hom-tridendriform coalgebra,
 then $(M, \Delta_{-1, M}^{0, k}=\Delta_{-1, M}\circ\alpha_M^{2^k-1}, \Delta_{0, M}^{0, k}=\Delta_{0, M}\circ\alpha_M^{2^k-1}, 
 \Delta_{1, M}^{0, k}=\Delta_{1, M}\circ\alpha_M^{2^k-1}, \alpha_M^{2^k-1})$ is a comodule over 
 $(C, \Delta_{-1}^{0, k}=\Delta_{-1}\circ\alpha^{2^k-1}, \Delta_{0, M}^{0, k}=\Delta_0\circ\alpha^{2^k-1}, 
 \Delta_{1, M}^{0, k}=\Delta_1\circ\alpha^{2^k-1}, \alpha^{2^k-1})$.
 (resp. $\Delta_{i, M}^{n, k}=(\alpha^n\otimes Id_M)\circ\Delta_{i, M}\circ\alpha_M^{2^k}$)
 confere to $s$ (resp. $m$) a structure  of Hom-tridendriform comodule.
 \end{rem}
 
\section{Post-Hom-Lie coalgebra}
In this section, we define post-Hom-Lie coalgebras and give some properties in relation with other algebraic structures.
\begin{defn}\label{tdd}
 A post-Hom-Lie coalgebra is a vector space $C$ endowed with two linear comultiplications $\Delta : C\rightarrow C\otimes C$ and $\gamma : C\rightarrow C\otimes C$ and, $\alpha : C\rightarrow C$ a linear map such that:
\begin{eqnarray}
 \gamma=-\tau\circ\gamma;\ \ \gamma\circ\alpha= \alpha^{\otimes^{2}}\circ\gamma\hspace{7cm} \label{dplc1}
 \\
(1+\xi+\xi^2)\circ(\alpha\otimes\gamma)\circ\gamma=0,\hspace{6cm}\label{dplc2}\\
(\alpha\otimes\gamma)\circ\Delta-(\Delta\otimes \alpha)\circ\gamma-(\tau\otimes I)\circ(\alpha\otimes\Delta)\circ\gamma=0,\hspace{4cm}\label{dplc3}\\
(\Delta\otimes \alpha)\circ\Delta\!-\!(\alpha\otimes\Delta)\circ\Delta\!-\!(\tau\otimes I)\circ(\Delta\otimes \alpha)\circ\Delta\!
+\!(\tau\otimes I)\circ(\alpha\otimes\Delta)\circ\Delta\!+\!(\gamma\otimes \alpha)\circ\Delta=0,\label{dplc4}
\end{eqnarray}
where $\tau(x\otimes y)=y\otimes x$, $\xi(x\otimes y\otimes z)=y\otimes z\otimes x$ and $\xi^2(x\otimes y\otimes z)=z\otimes x\otimes y$ for all $x, y,z\in C$.
\end{defn}

\begin{rem}
We have the following observations :
 $$\xi=(I\otimes \tau)\circ(\tau\otimes I)\quad\mbox{and}\quad \xi^2=(\tau\otimes I)\circ(I\otimes\tau).$$
\end{rem}

\begin{rem}
 The qualities (\ref{dplc1})-(\ref{dplc4}) may be rewritten respectively as :
\begin{eqnarray}
 x_1\otimes x_2=-x_2\otimes x_1;\ \alpha(x)_1\otimes \alpha(x)_2=\alpha(x_1)\otimes \alpha(x_2)\hspace{3cm}\label{dplc1a}
 \\
\alpha(x_1)\otimes x_{21}\otimes x_{22}+x_{21}\otimes x_{22}\otimes \alpha(x_1)+x_{22}\otimes \alpha(x_1)\otimes x_{21}=0,\hspace{3cm}\label{dplc2a}
\\
\alpha(x_{(1)})\otimes x_{(2)1}\otimes x_{(2)2}-x_{1(1)}\otimes x_{1(2)}\otimes \alpha(x_2)-x_{2(1)}\otimes \alpha(x_1)\otimes x_{2(2)}=0,\hspace{2cm}\label{dplc3a}
\\
x_{(1)(1)}\otimes x_{(1)(2)}\otimes\alpha(x_{(2)})-\alpha(x_{(1)})\otimes x_{(2)(1)}\otimes x_{(2)(2)}
-x_{(1)(2)}\otimes x_{(1)(1)}\otimes\alpha( x_{(2)})\label{dplc4a}\hspace{1,5cm}
\\
+x_{(2)(1)}\otimes \alpha(x_{(1)})\otimes x_{(2)(2)}+x_{(1)1}\otimes x_{(1)2}\otimes \alpha(x_{(2)})=0\nonumber.
\end{eqnarray}
\end{rem}

\begin{rem}
 The axioms (\ref{dplc3a}) and (\ref{dplc4a}) are respectively equivalent to :
\begin{eqnarray}
 x_{(2)1}\otimes \alpha(x_{(1)})\otimes x_{(2)2}- x_{1(2)}\otimes x_{1(1)}\otimes \alpha(x_2)- \alpha(x_1)\otimes x_{2(1)}\otimes x_{2(2)}=0,\hspace{2cm}\label{dplc3b}
 \\
 x_{(1)(2)}\otimes x_{(1)(1)}\otimes \alpha(x_{(2)})- x_{(2)(1)}\otimes \alpha(x_{(1)})\otimes x_{(2)(2)}
- x_{(1)(1)}\otimes x_{(1)(2)}\otimes \alpha(x_{(2)})\label{dplc4b}\hspace{1,5cm}\\
+ \alpha(x_{(1)})\otimes x_{(2)(1)}\otimes x_{(2)(2)}+ x_{(1)2}\otimes x_{(1)1}\otimes \alpha(x_{(2)})=0\nonumber.
\end{eqnarray}
\end{rem}

\begin{rem}
In any Hom-Lie coalgebra we have :
\begin{eqnarray}
x_{11}\otimes x_{12}\otimes \alpha(x_2)&=&(\gamma\otimes \alpha)\gamma(x)=-(\gamma\otimes \alpha)\tau\gamma(x)=-x_{21}\otimes x_{22}\otimes\alpha(x_1). \label{re1}\\
x_{21}\otimes \alpha(x_1)\otimes x_{22}&=&(\tau\otimes I)(\alpha\otimes \gamma)\gamma(x)
=-(\tau\otimes I)(\alpha\otimes \tau\gamma)\gamma(x)=-x_{22}\otimes \alpha(x_1)\otimes x_{21}\label{re2}.
 \end{eqnarray}
 \begin{eqnarray}
  x_{12}\otimes x_{11}\otimes \alpha(x_2)&=&(\tau\otimes I)(x_{11}\otimes x_{12}\otimes \alpha(x_2))=(\tau\otimes I)(\gamma\otimes \alpha)\gamma(x)\nonumber\\
&=&(\gamma\otimes \alpha)\tau\gamma(x)=x_{21}\otimes x_{22}\otimes \alpha(x_1)\\
&=&-(\gamma\otimes \alpha)\gamma(x)=-x_{11}\otimes x_{12}\otimes \alpha(x_2).
 \end{eqnarray}
\end{rem}
\begin{rem}
In any post-Hom-Lie coalgebra we have :
\begin{eqnarray}
x_{2(1)}\otimes x_{2(2)}\otimes \alpha(x_{1})&=&(\Delta\otimes \alpha)\tau\gamma(x)=-(\Delta\otimes \alpha)\gamma(x)=x_{1(1)}\otimes x_{1(2)}\otimes \alpha(x_{2}).
\end{eqnarray}
\end{rem}
\begin{defn}
 Let $(A, \gamma,  \Delta, \alpha)$ and $(A', \gamma', \Delta', \alpha')$ be two post-Hom-Lie coalgebras.
 A morphism of post-Hom-Lie coalgebras is a linear map $f : A\rightarrow A'$ such that
$$\Delta\circ f=(f\otimes f)\circ\Delta',\quad \gamma\circ f=(f\otimes f)\circ\gamma' \quad\mbox{and}\quad \alpha'\circ f=f\circ\alpha.$$
\end{defn}
\begin{lem}\label{hplm1}
  Let $(L, \gamma, \Delta, \alpha)$ be a post-Hom-Lie coalgebra and $\beta : L\rightarrow L$ be a
post-Hom-Lie coalgebra  endomorphism. Then
$L_\beta=(L, \gamma_\beta=\gamma\circ\beta, \Delta_\beta=\Delta\circ\beta, \beta\circ\alpha)$ is a post-Hom-Lie coalgebra.

Moreover, suppose that $(L', \gamma', \Delta')$ is a post-Lie coalgebra and $\alpha' : L'\rightarrow L'$ is a
post-Lie coalgebra  endomorphism. If $f : L\rightarrow L'$ is a post-Lie coalgebra morphism that satisfies
 $\beta\circ f= f\circ\alpha'$, then $ f : L_\beta\rightarrow L'_{\alpha'}$
is a morphism of post-Hom-Lie coalgebras.
\end{lem}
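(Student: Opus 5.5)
The plan is to verify the four axioms (\ref{dplc1})--(\ref{dplc4}) for $L_\beta$ directly. I will use the endomorphism relations $\gamma\circ\beta=(\beta\otimes\beta)\circ\gamma$, $\Delta\circ\beta=(\beta\otimes\beta)\circ\Delta$ and $\beta\circ\alpha=\alpha\circ\beta$, in the same way as in the proof of the twisting theorem for Hom-tridendriform coalgebras. The basic tool is the identity
\[
(\Delta_\beta\otimes\beta\alpha)\circ\Delta_\beta=(\Delta\circ\beta\otimes\beta\circ\alpha)\circ(\beta\otimes\beta)\circ\Delta=(\beta^2)^{\otimes3}\circ(\Delta\otimes\alpha)\circ\Delta .
\]
The same identity holds, mutatis mutandis, for every composite of two comultiplications $\mu_\beta,\nu_\beta\in\{\gamma_\beta,\Delta_\beta\}$ placed on either side: $(\beta\alpha\otimes\mu_\beta)\circ\nu_\beta=(\beta^2)^{\otimes3}\circ(\alpha\otimes\mu)\circ\nu$. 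To see this, first pull the inner $\beta$ of $\nu_\beta$ out as $\beta\otimes\beta$. Then use $\mu\circ\beta^2=(\beta^2\otimes\beta^2)\circ\mu$ and $\beta^2\alpha=\alpha\beta^2$ to factor $(\beta^2)^{\otimes 3}$ to the left.

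Next I note that $\tau\otimes I$, $\xi$ and $\xi^2$ are permutations of tensor factors, so they commute with $(\beta^2)^{\otimes3}$. Consequently each of the left-hand sides of (\ref{dplc2}), (\ref{dplc3}) and (\ref{dplc4}) computed for $L_\beta$ equals $(\beta^2)^{\otimes 3}$ applied to the corresponding expression for $L$. That expression vanishes by hypothesis, so the twisted axiom holds. For (\ref{dplc1}), skew-cocommutativity is immediate: $\tau\circ\gamma\circ\beta=-\gamma\circ\beta$. Comultiplicativity follows from
\[
\gamma_\beta\circ\beta\alpha=(\beta\alpha)^{\otimes2}\circ\gamma_\beta .
\]
This holds because $\gamma\beta\beta\alpha=(\beta^2\otimes\beta^2)\gamma\alpha=(\beta^2\otimes\beta^2)(\alpha\otimes\alpha)\gamma$, and $\beta^2\alpha=\beta\alpha\beta$.

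For the morphism part, I read $L$ as an untwisted post-Lie coalgebra with $L_\beta$ its twist (the statement mirrors Yau's standard setup), and take $L'_{\alpha'}=(L',\gamma'\circ\alpha',\Delta'\circ\alpha',\alpha')$. Then the required identities are
\[
\Delta'_{\alpha'}\circ f=\Delta'\alpha' f=\Delta' f\beta=(f\otimes f)\Delta\beta=(f\otimes f)\circ\Delta_\beta ,
\]
the same computation for $\gamma$, and $f\circ\beta=\alpha'\circ f$. (The direction of the compatibility condition must be read as $f\circ\beta=\alpha'\circ f$, and the direction of the morphism equations as $\Delta'\circ f=(f\otimes f)\circ\Delta$; I will state these conventions explicitly.)

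The main obstacle I expect is bookkeeping rather than mathematics. Specifically, I must check that the permutation $\tau\otimes I$ in (\ref{dplc3}) and (\ref{dplc4}) really commutes past $(\beta^2)^{\otimes3}$, which it does because all three factors carry the same map. I also need to fix the slightly ambiguous direction conventions in the definition of morphism and in the hypothesis $\beta\circ f=f\circ\alpha'$ so that the composites match.
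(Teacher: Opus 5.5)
Your proposal is correct and is essentially the paper's argument. The paper also reduces each twisted axiom to the untwisted one via the endomorphism relations, writing it as (original expression)$\circ\beta^2$ rather than $(\beta^2)^{\otimes 3}\circ$(original expression); these agree because $\beta$ is an endomorphism commuting with $\alpha$. It then proves the morphism part by the same computation $\Delta'\circ\alpha'\circ f=\Delta'\circ f\circ\beta=(f\otimes f)\circ\Delta\circ\beta$, up to its reversed conventions. Your explicit repair of the direction conventions, and your check of $f\circ\beta=\alpha'\circ f$, fill gaps the paper leaves. That check needs $L$ untwisted, as you assume, since otherwise $L_\beta$ has structure map $\beta\circ\alpha$.
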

\begin{proof}
 It is clear that $(L,\gamma_\beta, \beta\circ\alpha)$ is a Hom-Lie coalgebra. Then,
 we have :
\begin{eqnarray}
 &&(\beta\alpha\otimes\gamma_\beta)\circ\Delta_\beta-\circ(\Delta_\beta\otimes\beta\alpha)\circ\gamma_\beta -(I\otimes \tau)\circ(\beta\alpha\otimes\Delta_\beta)\circ\gamma_\beta\nonumber\\
&=&(\beta\alpha\otimes\gamma\circ\beta)\circ\Delta\circ\beta-(\Delta\circ\beta\otimes\beta\alpha)\circ\gamma\circ\beta -(I\otimes \tau)\circ(\beta\alpha\otimes\Delta\circ\beta)\circ\gamma\circ\beta\nonumber\\
&=&(\alpha\otimes\gamma)\circ\Delta-(\Delta\otimes\alpha)\circ\gamma -(I\otimes \tau)\circ(\alpha\otimes\Delta)\circ\gamma)\circ\beta^2=0\nonumber.
\end{eqnarray}
Axiom (\ref{dplc4}) has a similar proof.

For the second part, observe that
\begin{eqnarray}
 \gamma_\beta\circ f\!\!\!&=&\!\!\!\!\gamma\circ\beta\circ f=\gamma\circ f\circ\alpha'\nonumber\\
&=&(f\otimes f)\circ \gamma'\circ\alpha'
=(f\otimes f)\circ\gamma'_{\alpha'},\label{ccc}\nonumber
\end{eqnarray}
 and
\begin{eqnarray}
\Delta_\beta\circ f\!\!\!\!&=&\!\!\!\!\Delta\circ\beta\circ f=\Delta\circ f\circ\alpha'
=(f\otimes f)\circ \Delta'\circ\alpha'=(f\otimes f)\circ\Delta'_{\alpha'}\nonumber.
\end{eqnarray}
Hence, the conclusion holds.
\end{proof}
\begin{prop}
 Let $(C, \Delta, \gamma, \alpha)$ be a post-Hom-Lie coalgebra. Let us define
$$\tilde\Delta=\Delta+\gamma\quad\mbox{and}\quad\tilde\gamma=-\gamma.$$
Then, $(C, \tilde\Delta, \tilde\gamma, \alpha)$ is a post-Hom-Lie coalgebra.
\end{prop}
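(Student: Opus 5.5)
The plan is to verify the axioms (\ref{dplc1})--(\ref{dplc4}) of Definition \ref{tdd} directly for $(\tilde\Delta,\tilde\gamma,\alpha)$. We expand every occurrence of $\tilde\Delta=\Delta+\gamma$ and $\tilde\gamma=-\gamma$ and sort the result by type: terms quadratic in $\Delta$, mixed $\Delta\gamma$ terms, and terms quadratic in $\gamma$. Each group should vanish by one of the original axioms, by skew-cocommutativity, or by the Hom-co-Jacobi identity. This mirrors the algebra statement that $x\rhd' y=x\rhd y+[x,y]$, $[x,y]'=-[x,y]$ is again post-Lie.

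First, (\ref{dplc1}) and (\ref{dplc2}) for $\tilde\gamma$ are immediate. Skew-cocommutativity and comultiplicativity are linear in $\gamma$. The co-Jacobi expression $(\alpha\otimes\gamma)\circ\gamma$ is quadratic in $\gamma$, so the sign squares away.

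For (\ref{dplc3}), expanding gives
$$-\Big[(\alpha\otimes\gamma)\circ\Delta-(\Delta\otimes\alpha)\circ\gamma-(\tau\otimes I)\circ(\alpha\otimes\Delta)\circ\gamma\Big]
-(\alpha\otimes\gamma)\circ\gamma+(\gamma\otimes\alpha)\circ\gamma+(\tau\otimes I)\circ(\alpha\otimes\gamma)\circ\gamma .$$
The bracket vanishes by (\ref{dplc3}). For the $\gamma\gamma$-part, the identities (\ref{re1}) and (\ref{re2}) give
$$(\gamma\otimes\alpha)\circ\gamma=-\xi\circ(\alpha\otimes\gamma)\circ\gamma,\qquad (\tau\otimes I)\circ(\alpha\otimes\gamma)\circ\gamma=-\xi^2\circ(\alpha\otimes\gamma)\circ\gamma .$$
Hence that part equals $-(1+\xi+\xi^2)\circ(\alpha\otimes\gamma)\circ\gamma$, which is $0$ by (\ref{dplc2}).

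For (\ref{dplc4}), the pure $\Delta\Delta$ terms reproduce the first four terms of (\ref{dplc4}). The last term $(\tilde\gamma\otimes\alpha)\circ\tilde\Delta$ contributes $-(\gamma\otimes\alpha)\circ\Delta$, which exactly cancels the mixed term $+(\gamma\otimes\alpha)\circ\Delta$ coming from $(\tilde\Delta\otimes\alpha)\circ\tilde\Delta$. It also leaves $-(\gamma\otimes\alpha)\circ\Delta$ in a form where, combined with the original (\ref{dplc4}), the $\Delta\Delta$ block plus this term is zero. The $\gamma\gamma$ terms are $(\gamma\otimes\alpha)\gamma-(\alpha\otimes\gamma)\gamma-(\tau\otimes I)(\gamma\otimes\alpha)\gamma+(\tau\otimes I)(\alpha\otimes\gamma)\gamma-(\gamma\otimes\alpha)\gamma$. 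Using $(\tau\otimes I)\circ(\gamma\otimes\alpha)=-(\gamma\otimes\alpha)$ and the same $\xi$-rewriting as above, they reduce to $-(1+\xi+\xi^2)\circ(\alpha\otimes\gamma)\circ\gamma=0$.

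The main obstacle is the remaining mixed terms:
$$(\Delta\otimes\alpha)\gamma-(\alpha\otimes\gamma)\Delta-(\alpha\otimes\Delta)\gamma-(\tau\otimes I)(\gamma\otimes\alpha)\Delta-(\tau\otimes I)(\Delta\otimes\alpha)\gamma+(\tau\otimes I)(\alpha\otimes\gamma)\Delta+(\tau\otimes I)(\alpha\otimes\Delta)\gamma .$$
Here I would use $(\tau\otimes I)(\gamma\otimes\alpha)\Delta=-(\gamma\otimes\alpha)\Delta$, then substitute (\ref{dplc3}) for $(\alpha\otimes\gamma)\circ\Delta$. I would also substitute its image under $\tau\otimes I$, which is (\ref{dplc3b}), for $(\tau\otimes I)(\alpha\otimes\gamma)\circ\Delta$. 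After these substitutions, the terms $(\Delta\otimes\alpha)\gamma$ and $(\tau\otimes I)(\alpha\otimes\Delta)\gamma$ should cancel in pairs. The remaining ones are matched via the last remark, $(\Delta\otimes\alpha)\circ\tau\circ\gamma=-(\Delta\otimes\alpha)\circ\gamma$. I would carry out this bookkeeping in Sweedler notation, following the pattern of (\ref{dplc3a})--(\ref{dplc4b}). If a sign mismatch survives, I would recheck it against the dual of the algebra identity $[x,y]\rhd z$-compatibility.
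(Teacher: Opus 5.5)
Your overall route is the paper's: expand $\tilde\Delta=\Delta+\gamma$ and $\tilde\gamma=-\gamma$, sort the terms by type, and kill each block with (\ref{dplc3}), its $(\tau\otimes I)$-image (\ref{dplc3b}), (\ref{dplc4}), or the Hom-co-Jacobi identity via (\ref{re1})--(\ref{re2}). Your treatment of (\ref{dplc1})--(\ref{dplc3}) is correct, and so is your treatment of the five $\gamma\gamma$ terms of (\ref{dplc4}). The gap is in how you handle the terms of shape $(\gamma\otimes\alpha)\circ\Delta$ in (\ref{dplc4}). As you set it up, neither the block you pair with the $\Delta\Delta$ terms nor your block of remaining mixed terms actually vanishes. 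There are exactly three such terms:
\begin{itemize}
\item $+(\gamma\otimes\alpha)\circ\Delta$, from $(\tilde\Delta\otimes\alpha)\circ\tilde\Delta$;
\item $-(\tau\otimes I)\circ(\gamma\otimes\alpha)\circ\Delta$, from $-(\tau\otimes I)\circ(\tilde\Delta\otimes\alpha)\circ\tilde\Delta$;
\item $-(\gamma\otimes\alpha)\circ\Delta$, from $(\tilde\gamma\otimes\alpha)\circ\tilde\Delta$.
\end{itemize}
Once the first and third cancel, nothing of them is left. Your claim that a $-(\gamma\otimes\alpha)\circ\Delta$ remains to close the $\Delta\Delta$ block uses the same term twice. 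It also has the wrong sign: (\ref{dplc4}) says the $\Delta\Delta$ block equals $-(\gamma\otimes\alpha)\circ\Delta$, so it must be completed by $+(\gamma\otimes\alpha)\circ\Delta$. That completion is exactly the second term, since $-(\tau\otimes I)\circ(\gamma\otimes\alpha)\circ\Delta=+(\gamma\otimes\alpha)\circ\Delta$ by skew-cocommutativity. You instead listed it among the remaining mixed terms, where it cannot cancel. Substituting (\ref{dplc3}) for $(\alpha\otimes\gamma)\circ\Delta$ and (\ref{dplc3b}) for $(\tau\otimes I)\circ(\alpha\otimes\gamma)\circ\Delta$ makes the other six terms cancel outright in pairs. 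That leaves a residual $+(\gamma\otimes\alpha)\circ\Delta$. The remark $(\Delta\otimes\alpha)\circ\tau\circ\gamma=-(\Delta\otimes\alpha)\circ\gamma$ cannot absorb it, because it concerns a different composite.

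The repair is to move $-(\tau\otimes I)\circ(\gamma\otimes\alpha)\circ\Delta$ into the $\Delta\Delta$ block, which then becomes precisely the left-hand side of (\ref{dplc4}) and vanishes. The six genuinely mixed terms, $(\Delta\otimes\alpha)\circ\gamma$, $(\alpha\otimes\Delta)\circ\gamma$, $(\alpha\otimes\gamma)\circ\Delta$ and their $(\tau\otimes I)$-images, vanish by (\ref{dplc3}) and (\ref{dplc3b}) alone, so the last remark is not needed. The $\gamma\gamma$ block vanishes as you showed. With that correction your argument is complete. It then matches in substance the paper's proof, which expands all eighteen terms in Sweedler notation and groups them into a (\ref{dplc4}) block, a co-Jacobi block, and two mixed blocks governed by (\ref{dplc3a}) and (\ref{dplc3b}).
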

\begin{proof}
 The skew-cosymmetry and the Hom-coJacobi identity are trivial. Let us prove the last two axioms. For any $x\in C$, we have on the one hand
\begin{eqnarray}
 &&\qquad(\alpha\otimes\tilde\gamma)\tilde\Delta(x)-(\tilde\Delta\otimes \alpha)\tilde\gamma(x)-(\tau\otimes\tilde\Delta)\tilde\gamma(x)=\nonumber\\
&&=-(\alpha\otimes\gamma)(\Delta+\gamma)(x)+((\Delta+\gamma)\otimes \alpha)\gamma(x)+(\tau\otimes I)(\alpha\otimes(\Delta+\gamma))\gamma(x)\nonumber\\
&&=-(\alpha\otimes\gamma)\Delta(x)-(\alpha\otimes\gamma)\gamma(x)+(\Delta\otimes \alpha)\gamma(x)+(\gamma\otimes \alpha)\gamma(x)\nonumber\\
&&\qquad+(\tau\otimes I)(\alpha\otimes\Delta)\gamma(x)+(\tau\otimes I)(\alpha\otimes\gamma)\gamma(x)\nonumber\\
&&=\Big(-(\alpha\otimes\gamma)\Delta(x)+(\Delta\otimes \alpha)\gamma(x)+(\tau\otimes I)(\alpha\otimes\Delta)\gamma(x)\Big)\nonumber\\
&&\qquad+\Big(-(\alpha\otimes\gamma)\gamma(x)+(\gamma\otimes \alpha)\gamma(x)+(\tau\otimes I)(\alpha\otimes\gamma)\gamma(x)\Big).\nonumber
\end{eqnarray}
The firt big parenthesis cancel by (\ref{dplc3}). For the second, we have 
\begin{eqnarray}
 &&-(\alpha\otimes\gamma)\gamma(x)+(\gamma\otimes \alpha)\gamma(x)+(\tau\otimes I)(\alpha\otimes\gamma)\gamma(x)=\nonumber\\
&&\qquad=-\alpha(x_1)\otimes x_{21}\otimes x_{22}+x_{11}\otimes x_{12}\otimes \alpha(x_2)+x_{21}\otimes \alpha(x_1)\otimes x_{22}.\nonumber
\end{eqnarray}
Which cancels by Hom-coJacobi identity, using (\ref{re1}) and (\ref{re2}).\\
On the other hand, 
\begin{eqnarray}
 &&(\tilde\Delta\otimes \alpha)\tilde\Delta(x)-(\alpha\otimes \tilde\Delta)\tilde\Delta(x) -(\tau\otimes I)(\tilde\Delta\otimes \alpha)\tilde\Delta(x) 
+(\tau\otimes I)(\alpha\otimes \tilde\Delta)\tilde\Delta(x) +(\tilde\gamma\otimes \alpha)\tilde\Delta(x)=\nonumber\\
&&\quad=\tilde\Delta(x_{(1)})\otimes \alpha(x_{(2)})+\tilde\Delta(x_{1})\otimes \alpha(x_{2})
-\alpha(x_{(1)})\otimes\tilde\Delta(x_{(2)})-\alpha(x_{1})\otimes\tilde\Delta(x_{2})\nonumber\\
&&\qquad-(\tau\otimes I)(\tilde\Delta(x_{(1)})\otimes \alpha(x_{(2)}))
- (\tau\otimes I)(\tilde\Delta(x_{1})\otimes \alpha(x_{2}))
+(\tau\otimes I)(\alpha(x_{(1)})\otimes\tilde\Delta(x_{(2)}))\nonumber\\
&&\qquad+(\tau\otimes I)(\alpha(x_{1})\otimes\tilde\Delta(x_{2}))
+\tilde\gamma(x_{(1)})\otimes \alpha(x_{(2)})+\tilde\gamma(x_1)\otimes \alpha(x_2)\nonumber\\
&&\quad=x_{(1)(1)}\otimes x_{(1)(2)}\otimes \alpha(x_{(2)})+x_{(1)1}\otimes x_{(1)2}\otimes \alpha(x_{(2)})+x_{1(1)}\otimes x_{1(2)}\otimes \alpha(x_{2})
+x_{11}\otimes x_{12}\otimes \alpha(x_{2})\nonumber\\
&&\qquad-\alpha(x_{(1)})\otimes x_{(2)(1)}\otimes x_{(2)(2)}-\alpha(x_{(1)})\otimes x_{(2)1}\otimes x_{(2)2}
-\alpha(x_{1})\otimes x_{2(1)}\otimes x_{2(2)}-\alpha(x_{1})\otimes x_{21}\otimes x_{22}\nonumber\\
&&\qquad-x_{(1)(2)}\otimes x_{(1)(1)}\otimes \alpha(x_{(2)})
-x_{(1)2}\otimes x_{(1)1}\otimes \alpha(x_{(2)})-x_{1(2)}\otimes x_{1(1)}\otimes \alpha(x_{2})-x_{12}\otimes x_{11}\otimes \alpha(x_{2})\nonumber\\
&&\qquad+x_{(2)(1)}\otimes \alpha(x_{(1)})\otimes x_{(2)(2)}+x_{(2)1}\otimes \alpha(x_{(1)})\otimes x_{(2)2}+x_{2(1)}\otimes\alpha(x_{1})\otimes x_{2(2)}
+x_{21}\otimes \alpha(x_{1})\otimes x_{22}\nonumber\\
&&\qquad-x_{(1)1}\otimes x_{(1)2}\otimes\alpha(x_{(2)})-x_{11}\otimes x_{12}\otimes \alpha(x_{2})\nonumber.
\end{eqnarray}
By ordering and grouping, it comes
\begin{eqnarray}
&&(\tilde\Delta\otimes \alpha)\tilde\Delta(x)\!-\!(\alpha\otimes \tilde\Delta)\tilde\Delta(x)\! -\!(\tau\otimes I)(\tilde\Delta\otimes \alpha)\tilde\Delta(x) 
\!+\!(\tau\otimes I)(\alpha\otimes \tilde\Delta)\tilde\Delta(x)\! +\!(\tilde\gamma\otimes \alpha)\tilde\Delta(x)=\nonumber\\
&&\quad=\Big(x_{(1)(1)}\otimes x_{(1)(2)}\otimes \alpha(x_{(2)})-\alpha(x_{(1)})\otimes x_{(2)(1)}\otimes x_{(2)(2)}
-x_{(1)(2)}\otimes x_{(1)(1)}\otimes \alpha(x_{2})\nonumber\\
&&\qquad+x_{(2)(1)}\otimes \alpha(x_{(1)})\otimes x_{(2)(2)}-x_{(1)1}\otimes x_{(1)2}\otimes \alpha(x_{(2)})\Big)
+\Big(-\alpha(x_{1})\otimes x_{21}\otimes x_{22}-x_{12}\otimes x_{11}\otimes \alpha(x_{2})\nonumber\\
&&\qquad+x_{21}\otimes \alpha(x_{1})\otimes x_{22} \Big)+\Big(x_{11}\otimes x_{12}\otimes \alpha(x_{2})-\alpha(x_{1})\otimes x_{21}\otimes x_{22}\Big)
+\Big(x_{1(1)}\otimes x_{1(2)}\otimes \alpha(x_{2})\nonumber\\
&&\qquad-\alpha(x_{(1)})\otimes x_{(2)1}\otimes x_{(2)2}+x_{2(1)}\otimes\alpha(x_{1})\otimes x_{2(2)}\Big)
+\Big(x_{(2)1}\otimes\alpha(x_{(1)})\otimes x_{(2)2}-\alpha(x_{1})\otimes x_{2(1)}\otimes x_{2(2)}\nonumber\\
&&\qquad-x_{1(2)}\otimes x_{1(1)}\otimes \alpha(x_{2}) \Big)
+\Big(x_{(1)1}\otimes x_{(1)2}\otimes\alpha(x_{2})-x_{(1)2}\otimes x_{(1)1}\otimes \alpha(x_{(2)})\Big).\nonumber
\end{eqnarray}
\end{proof}

\begin{prop}
 Let $(C, \Delta, \gamma)$ be a post-Hom-Lie coalgebra. Then, $C$ is an admissible Hom-Lie coalgebra with the map
$$\tilde\Delta(x)=\Delta(x)+\frac{1}{2}\gamma(x).$$
\end{prop}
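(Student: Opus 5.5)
We first reduce the statement to a Hom-co-Jacobi identity for an explicit map. By skew-cocommutativity (\ref{dplc1}) we have $\tau\circ\gamma=-\gamma$. The commutator comultiplication attached to $\tilde\Delta$ is therefore
$$\tilde\gamma:=(1-\tau)\circ\tilde\Delta=(1-\tau)\circ\Delta+\tfrac12(\gamma-\tau\circ\gamma)=\Delta-\tau\circ\Delta+\gamma .$$
This is the coalgebra dual of the classical bracket $\{x,y\}=x\triangleright y-y\triangleright x+[x,y]$ of a post-Lie algebra. Skew-cocommutativity of $\tilde\gamma$ is automatic. By the definition of Hom-admissible coalgebra, and the remark following it, it only remains to prove
$$(1+\xi+\xi^2)\circ(\alpha\otimes\tilde\gamma)\circ\tilde\gamma=0 .$$

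We will expand $(\alpha\otimes\tilde\gamma)\circ\tilde\gamma$ bilinearly into nine terms, sorted by which of $\Delta$, $\tau\Delta$ and $\gamma$ occurs in the outer and inner slot. Each term is written as $\Phi_\sigma\circ(\cdot)$ applied to one of the basic composites $(\alpha\otimes\Delta)\Delta$, $(\Delta\otimes\alpha)\Delta$, $(\alpha\otimes\gamma)\Delta$, $(\Delta\otimes\alpha)\gamma$, $(\alpha\otimes\Delta)\gamma$, $(\gamma\otimes\alpha)\Delta$ and $(\alpha\otimes\gamma)\gamma$. For example, $(\alpha\otimes\Delta)\circ\tau\circ\Delta=\Phi_{(231)}\circ(\Delta\otimes\alpha)\circ\Delta$ (up to the paper's permutation convention). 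The terms then fall into three groups, and each is killed by one axiom once the cyclic symmetrizer $1+\xi+\xi^2$ is applied.
\begin{enumerate}
\item The $\gamma$--$\gamma$ term $(\alpha\otimes\gamma)\circ\gamma$ vanishes after cyclic summation by the Hom-co-Jacobi identity (\ref{dplc2}).
\item The mixed terms, in which exactly one factor is $\gamma$, are collected into multiples of
$$(\alpha\otimes\gamma)\circ\Delta-(\Delta\otimes\alpha)\circ\gamma-(\tau\otimes I)\circ(\alpha\otimes\Delta)\circ\gamma$$
and of its permuted copies. They cancel by (\ref{dplc3}), using the identities (\ref{re1}), (\ref{re2}) and the last remark (that $(\Delta\otimes\alpha)\tau\gamma=-(\Delta\otimes\alpha)\gamma$) to align the index patterns.
\item The pure $\Delta$ terms (four of them) are the main obstacle. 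Under $1+\xi+\xi^2$ their twelve resulting summands should reorganize into $(1+\xi+\xi^2)$ applied to the left-hand side of (\ref{dplc4}), \emph{without} its last summand $(\gamma\otimes\alpha)\circ\Delta$. That missing summand must then be matched against the leftover $\Delta$--$\gamma$ term coming from the inner $\gamma$ in $(\alpha\otimes\gamma)\circ\Delta$ and its cyclic images, with signs and coefficient $1$ once the $\tfrac12$ has been absorbed.
\end{enumerate}

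In practice we would carry out the computation in the Sweedler-type notation of (\ref{dplc1a})--(\ref{dplc4b}), with $\Delta(x)=x_{(1)}\otimes x_{(2)}$ and $\gamma(x)=x_1\otimes x_2$. We would list the eighteen $\Delta\Delta$ summands together with the $\Delta\gamma$ summands, and verify that they split as $(1+\xi+\xi^2)$ applied to the left-hand side of (\ref{dplc4}) plus permuted copies of (\ref{dplc3}). The delicate point is the permutation bookkeeping: checking that $\xi$ and $\xi^2$ applied to the $\tau\otimes I$ terms in (\ref{dplc4}) reproduce exactly the $(\alpha\otimes\tau\Delta)\circ\Delta$ and $(\alpha\otimes\Delta)\circ\tau\Delta$ terms. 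For this we would use the relations $\xi=(I\otimes\tau)(\tau\otimes I)$ and $\xi^2=(\tau\otimes I)(I\otimes\tau)$ recorded in the remark after Definition~\ref{tdd}.
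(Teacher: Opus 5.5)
Your plan is correct and is the same direct computation the paper does: write $\tilde\gamma=\Delta-\tau\circ\Delta+\gamma$, expand $(\alpha\otimes\tilde\gamma)\circ\tilde\gamma$, symmetrize by $1+\xi+\xi^2$, and cancel the pieces using (\ref{dplc2}), (\ref{dplc3}) and (\ref{dplc4}); your term-by-term grouping is cleaner than the paper's. One slip in step 3: the summand that matches $(\gamma\otimes\alpha)\circ\Delta$ is $-(\alpha\otimes\gamma)\circ\tau\circ\Delta=-\xi^2\circ(\gamma\otimes\alpha)\circ\Delta$ (outer $\tau\Delta$, inner $\gamma$), not $(\alpha\otimes\gamma)\circ\Delta$, because the latter is used up with $(\alpha\otimes\Delta)\circ\gamma-(I\otimes\tau)\circ(\alpha\otimes\Delta)\circ\gamma$ in (\ref{dplc3}); the four pure $\Delta$ terms then give exactly $-(1+\xi+\xi^2)$ applied to the first four summands of (\ref{dplc4}).
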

\begin{proof}
 For any $x\in C$, we have
\begin{eqnarray}
 &&\qquad(\alpha\otimes\tilde\gamma)\tilde\gamma(x)=\nonumber\\
&=&(\alpha\otimes(\Delta+\frac{1}{2}\gamma-\tau\Delta-\frac{1}{2}\tau\gamma))\tilde\Delta(x)\nonumber\\
&&-(\tau\otimes I)(I\otimes\tau)((\alpha\otimes(\Delta+\frac{1}{2}\gamma-\tau\Delta-\frac{1}{2}\tau\gamma)\otimes \alpha)\tilde\Delta(x)\nonumber\\
&=&(\alpha\otimes(\Delta-\tau\Delta+\gamma))\Delta(x)+\frac{1}{2}(\alpha\otimes(\Delta-\tau\Delta+\gamma))\gamma(x)\nonumber\\
&&-(\tau\otimes I)(I\otimes\tau)((\Delta-\tau\Delta+\gamma)\otimes \alpha)\Delta(x)
-\frac{1}{2}(\tau\otimes I)(I\otimes\tau)((\Delta-\tau\Delta+\gamma)\otimes \alpha)\gamma(x)\nonumber
\end{eqnarray}

\begin{eqnarray}
&=&(\alpha\otimes\Delta)\Delta(x)-(I\otimes\tau)(\alpha\otimes\Delta)\Delta(x)+(\alpha\otimes\gamma)\Delta(x)
-(\tau\otimes I)(I\otimes\tau)(\Delta\otimes \alpha)\Delta(x)\nonumber\\
&&+(\tau\otimes I)(I\otimes\tau)(\tau\otimes I)(\Delta\otimes \alpha)\Delta(x)
-(\tau\otimes I)(I\otimes\tau)(\gamma\otimes \alpha)\Delta(x)+\frac{1}{2}(\alpha\otimes\Delta)\gamma(x)\nonumber\\
&&-\frac{1}{2}(\tau\otimes I)(\alpha\otimes\Delta)\gamma(x)+\frac{1}{2}(\alpha\otimes\gamma)\gamma(x)
-\frac{1}{2}(\tau\otimes I)(I\otimes\tau)(\Delta\otimes \alpha)\gamma(x)\nonumber\\
&&-\frac{1}{2}(\tau\otimes I)(I\otimes\tau)(\gamma\otimes \alpha)\gamma(x)
+\frac{1}{2}(\tau\otimes I)(I\otimes\tau)(\tau\otimes I)(\Delta\otimes \alpha)\gamma(x)\nonumber\\
&=&(\alpha\otimes\Delta)\Delta(x)-(I\otimes\tau)(\alpha\otimes\Delta)\Delta(x)+(\alpha\otimes\gamma)\Delta(x)
-(\tau\otimes I)(I\otimes\tau)(\Delta\otimes \alpha)\Delta(x)\nonumber\\
&&+(\tau\otimes I)(I\otimes\tau)(\tau\otimes I)(\Delta\otimes \alpha)\Delta(x)
-(\tau\otimes I)(I\otimes\tau)(\gamma\otimes \alpha)\Delta(x)\nonumber\\
&&-(I\otimes\tau)(\alpha\otimes\Delta)\gamma(x)+(I\otimes\tau)(\alpha\otimes\Delta)\gamma(x)
+(\alpha\otimes\gamma)\gamma(x)\nonumber\\
&=&\alpha(x_{(1)})\otimes x_{(2)(1)}\otimes x_{(2)(2)}-\alpha(x_{(1)})\otimes x_{(2)(2)}\otimes x_{(2)(1)}+\alpha(x_{(1)})\otimes x_{(2)1}\otimes x_{(2)2}\nonumber\\
&&-\alpha(x_{(2)})\otimes x_{(1)(1)}\otimes x_{(1)(2)}+\alpha(x_{(2)})\otimes x_{(1)(2)}\otimes x_{(1)(1)}-\alpha(x_{(2)})\otimes x_{(1)1}\otimes x_{(1)2}\nonumber\\
&&-\alpha(x_{1})\otimes x_{2(2)}\otimes x_{2(1)}+(\alpha\otimes\gamma)\gamma.\nonumber
\end{eqnarray}
It follows, by grouping the terms,
\begin{eqnarray}
 &&\qquad(\alpha\otimes\tilde\gamma)\tilde\gamma(x)+\xi(\alpha\otimes\tilde\gamma)\tilde\gamma(x)+\xi^2(\alpha\otimes\tilde\gamma)\tilde\gamma(x)=\nonumber\\
&=&\Big(\alpha(x_{(1))}\otimes x_{(2)(1)}\otimes x_{(2)(2)}-x_{(1)(1)}\otimes x_{(1)(2)}\otimes \alpha(x_{(2)})+x_{(1)(2)}\otimes x_{(1)(1)}\otimes \alpha(x_{(2)})\nonumber\\
&&-x_{(1)1}\otimes x_{(1)2}\otimes \alpha(x_{(2)})-x_{(2)(1)}\otimes \alpha(x_{(1)})\otimes x_{(2)(2)}\Big)
+\Big(-\alpha(x_{(1)})\otimes x_{(2)(2)}\otimes x_{(2)(1)}\nonumber\\
&&+x_{(2)(1)}\otimes x_{(2)(2)}\otimes \alpha(x_{(1)})-x_{(1)(2)}\otimes \alpha(x_{(2)})\otimes x_{(1)(1)}
+x_{(1)(1)}\otimes \alpha(x_{(2)})\otimes x_{(1)(2)}\nonumber\\
&&-x_{(1)2}\otimes \alpha(x_{(2)})\otimes x_{(1)1}\Big)
+\Big(\alpha(x_{(1)})\otimes x_{(2)1}\otimes x_{(2)2}+x_{2(1)}\otimes x_{2(2)}\otimes \alpha(x_{1})-x_{2(1)}\otimes \alpha(x_{1})\otimes x_{2(2)}\Big)\nonumber\\
&&+\Big(-\alpha(x_{(2)})\otimes x_{(1)(1)}\otimes x_{(1)(2)}+\alpha(x_{(2)})\otimes x_{(1)(2)}\otimes x_{(1)(1)}-\alpha(x_{(2)})\otimes x_{(1)1}\otimes x_{(1)2}\nonumber
\end{eqnarray}
\begin{eqnarray}
&&-x_{(2)(2)}\otimes x_{(2)(1)}\otimes \alpha(x_{(1)})+x_{(2)(2)}\otimes \alpha(x_{(1)})\otimes x_{(2)(1)}\Big)
+\Big(-\alpha(x_{1})\otimes x_{2(2)}\otimes x_{2(1)}\nonumber\\
&&+\alpha(x_{1})\otimes x_{2(1)}\otimes x_{2(2)}+x_{(2)1}\otimes x_{(2)2}\otimes \alpha(x_{(1)})\Big)
+\Big(-x_{2(2)}\otimes x_{2(1)}\otimes \alpha(x_{1})\nonumber\\
&&+\alpha(x_{(1)})\otimes x_{(2)2}\otimes x_{(2)1}+x_{2(2)}\otimes \alpha(x_{1})\otimes x_{2(1)}\Big)\nonumber.
\end{eqnarray}
The second hand side cancels by (\ref{dplc3a}) and (\ref{dplc4a}).
\end{proof}
\begin{lem}\label{le1}
 Let $(C, \Delta, \gamma, \alpha)$ be a post-Hom-Lie coalgebra. Let us define the map $\tilde\Delta : C\rightarrow C\otimes C$ by
$$\tilde\Delta(x)=\Delta(x)+\frac{1}{2}\gamma(x)$$
for any $x\in C$. Then,
$$as_{\tilde\Delta}(x)-(\tau\otimes I)as_{\tilde\Delta}(x)=-\frac{1}{2}(\tau\otimes I)(\gamma\otimes \alpha)\gamma(x).$$
\end{lem}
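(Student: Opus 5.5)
Take $as_{\tilde\Delta}:=(\tilde\Delta\otimes\alpha)\circ\tilde\Delta-(\alpha\otimes\tilde\Delta)\circ\tilde\Delta$, the Hom-associator. This is the convention I assume, because the paper has not yet defined $as$. Write $T:=\tau\otimes I$. The plan is to substitute $\tilde\Delta=\Delta+\frac12\gamma$ and expand by degree in $\gamma$:
\begin{eqnarray*}
as_{\tilde\Delta}&=&as_{\Delta}+\tfrac12\Big((\gamma\otimes\alpha)\Delta+(\Delta\otimes\alpha)\gamma-(\alpha\otimes\gamma)\Delta-(\alpha\otimes\Delta)\gamma\Big)\\
&&+\tfrac14\Big((\gamma\otimes\alpha)\gamma-(\alpha\otimes\gamma)\gamma\Big).
\end{eqnarray*}
We then apply $1-T$ and treat the three groups separately. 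The tools are the axioms (\ref{dplc3})--(\ref{dplc4}), the Hom-co-Jacobi identity (\ref{dplc2}), and the identities (\ref{re1})--(\ref{re2}) of the remark on Hom-Lie coalgebras.

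\emph{Degree $0$ and part of degree $1$.} Axiom (\ref{dplc4}) says exactly that $(1-T)as_\Delta=-(\gamma\otimes\alpha)\Delta$. Skew-cocommutativity gives $T(\gamma\otimes\alpha)=-(\gamma\otimes\alpha)$, so $(1-T)\frac12(\gamma\otimes\alpha)\Delta=(\gamma\otimes\alpha)\Delta$. These two contributions cancel.

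\emph{The remaining degree-$1$ terms.} By (\ref{dplc3}) we have $(\alpha\otimes\gamma)\Delta=(\Delta\otimes\alpha)\gamma+T(\alpha\otimes\Delta)\gamma$. Hence
\[
(\Delta\otimes\alpha)\gamma-(\alpha\otimes\gamma)\Delta-(\alpha\otimes\Delta)\gamma=-(1+T)(\alpha\otimes\Delta)\gamma .
\]
Since $T^2=I$, we get $(1-T)(1+T)=0$, and this whole group is killed by $1-T$.

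\emph{Degree $2$.} This is where the right-hand side comes from. By (\ref{re1}), $T(\gamma\otimes\alpha)\gamma=-(\gamma\otimes\alpha)\gamma$. So $\frac14(1-T)(\gamma\otimes\alpha)\gamma$ is a multiple of $(\gamma\otimes\alpha)\gamma$, which is itself a multiple of $T(\gamma\otimes\alpha)\gamma$. For the term $(1-T)(\alpha\otimes\gamma)\gamma$ I rewrite the Hom-co-Jacobi identity using $\xi(\alpha\otimes\gamma)\gamma=-(\gamma\otimes\alpha)\gamma$ (from (\ref{re1})) and $\xi^2(\alpha\otimes\gamma)\gamma=-T(\alpha\otimes\gamma)\gamma$ (from (\ref{re2})). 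This gives $(1-T)(\alpha\otimes\gamma)\gamma=(\gamma\otimes\alpha)\gamma$. Collecting, the degree-$2$ part becomes a scalar multiple of $(\tau\otimes I)(\gamma\otimes\alpha)\gamma(x)$, which has the form of the stated right-hand side.

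The main obstacle is the scalar bookkeeping in this last step, because the signs from $\xi$, $\xi^2$ and $\tau$ are easy to get wrong. Pushing my assumed convention through naively gives $\frac14(\gamma\otimes\alpha)\gamma=-\frac14 T(\gamma\otimes\alpha)\gamma$, not the stated $-\frac12 T(\gamma\otimes\alpha)\gamma$. So I would recheck each of (\ref{re1}), (\ref{re2}) and the $\xi$-rewriting in Sweedler notation term by term. I would also check whether the intended definition of $as$ (for instance, the opposite order or a different normalization) accounts for the factor. The cancellations in degrees $0$ and $1$, by contrast, are robust.
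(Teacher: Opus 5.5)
Your route is the same as the paper's. You expand $\tilde\Delta=\Delta+\frac12\gamma$ by degree in $\gamma$ and cancel $(1-T)as_\Delta=-(\gamma\otimes\alpha)\Delta$ (from (\ref{dplc4})) against $\frac12(1-T)(\gamma\otimes\alpha)\Delta=(\gamma\otimes\alpha)\Delta$. You then kill the remaining degree-one terms with (\ref{dplc3}) and $(1-T)(1+T)=0$, and handle degree two with the Hom-co-Jacobi identity. The paper groups the terms into exactly these brackets. Every intermediate identity you state is correct (with $T=\tau\otimes I$), in particular:
\[
\xi(\alpha\otimes\gamma)\gamma=-(\gamma\otimes\alpha)\gamma,\qquad \xi^2(\alpha\otimes\gamma)\gamma=-T(\alpha\otimes\gamma)\gamma,\qquad (1-T)(\alpha\otimes\gamma)\gamma=(\gamma\otimes\alpha)\gamma .
\]

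The factor-of-two discrepancy you found is real, and it is an error in the statement, not in your bookkeeping. For $\tilde\Delta=\Delta+c\gamma$ your computation gives $(1-T)as_{\tilde\Delta}=(2c-1)(\gamma\otimes\alpha)\Delta+c^2(\gamma\otimes\alpha)\gamma$. So $c=\frac12$ is exactly what removes the $\Delta$-terms, and the correct identity is
\[
as_{\tilde\Delta}(x)-(\tau\otimes I)as_{\tilde\Delta}(x)=\tfrac14(\gamma\otimes\alpha)\gamma(x)=-\tfrac14(\tau\otimes I)(\gamma\otimes\alpha)\gamma(x).
\]

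Here is a concrete counterexample to the stated $-\frac12$. Take $C$ with basis $e^1,e^2$, $\gamma(e^1)=0$, $\gamma(e^2)=e^1\otimes e^2-e^2\otimes e^1$ (the dual of the non-abelian two-dimensional Lie algebra), $\Delta=0$ and $\alpha=\mathrm{id}$. All of (\ref{dplc1})--(\ref{dplc4}) hold. At $e^2$ the left-hand side is $\frac14(e^2\otimes e^1\otimes e^1-e^1\otimes e^2\otimes e^1)$, while the stated right-hand side is twice that.

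No convention rescues the $-\frac12$. The paper's proof uses exactly your definition of $as_{\tilde\Delta}$, and reversing the order would only flip the sign. The paper's own computation in fact also ends at coefficient $\frac14$. Its Hom-co-Jacobi bracket $\frac14\big((\gamma\otimes\alpha)\gamma-(\alpha\otimes\gamma)\gamma+T(\alpha\otimes\gamma)\gamma\big)$ vanishes, and the leftover is $-\frac14T(\gamma\otimes\alpha)\gamma$. Its printed last line, $-\frac14(\alpha\otimes\gamma)\gamma(x)$, is a slip: that term is not even $T$-antisymmetric, as anything of the form $(1-T)X$ must be.

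The only change you need is to stop hedging at the end. State and prove the lemma with $-\frac14$ in place of $-\frac12$.
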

\begin{proof}
We have,
\begin{eqnarray}
 as_{\tilde\Delta}(x)
&=&(\tilde\Delta\otimes \alpha)\tilde\Delta(x)-(\alpha\otimes\tilde\Delta)\tilde\Delta(x)\nonumber\\
&=&((\Delta+\frac{1}{2}\gamma)\otimes \alpha)(\Delta+\frac{1}{2}\gamma)(x)-(\alpha\otimes(\Delta+\frac{1}{2}\gamma))(\Delta+\frac{1}{2}\gamma)(x)\nonumber\\
&=&(\Delta\otimes \alpha)\Delta(x)+\frac{1}{2}(\gamma\otimes \alpha)\Delta(x)+
\frac{1}{2}(\Delta\otimes \alpha)\gamma(x)+\frac{1}{4}(\gamma\otimes \alpha)\gamma(x)\nonumber\\
&&-(\alpha\otimes\Delta)\Delta(x)-\frac{1}{2}(\alpha\otimes\gamma)\Delta(x)
-\frac{1}{2}(\alpha\otimes\Delta)\gamma(x)-\frac{1}{4}(\alpha\otimes\gamma)\gamma(x).\nonumber
\end{eqnarray}
It comes, by grouping the terms,
\begin{eqnarray}
&&\qquad as_{\tilde\Delta}(x)-(\tau\otimes \alpha)as_{\tilde\Delta}(x)=\nonumber\\
&&=\Big((\Delta\otimes \alpha)\Delta(x)-(\alpha\otimes\Delta)\Delta(x)-(\tau\otimes I)(\Delta\otimes \alpha)\Delta(x)
+(\tau\otimes I)(\alpha\otimes\Delta)\Delta(x)\Big)\nonumber\\
&&\quad+\Big(\frac{1}{2}(\gamma\otimes \alpha)\Delta(x)-\frac{1}{2}(\tau\otimes I)(\gamma\otimes \alpha)\Delta(x) \Big)
+\Big(\frac{1}{2}(\Delta\otimes \alpha)\gamma(x)-\frac{1}{2}(\alpha\otimes\gamma)\Delta(x)\nonumber\\
&&\quad+\frac{1}{2}(\tau\otimes I)(\alpha\otimes\Delta)\gamma(x)\Big)
+\Big(\frac{1}{4}(\gamma\otimes \alpha)\gamma(x)-\frac{1}{4}(\alpha\otimes\gamma)\gamma(x)+\frac{1}{4}(\tau\otimes I)(\alpha\otimes\gamma)\gamma(x)\Big)\nonumber\\
&&+\Big(-\frac{1}{2}(\tau\otimes I)(\Delta\otimes \alpha)\gamma(x)+\frac{1}{2}(\tau\otimes I)(\alpha\otimes\gamma)\Delta(x)
-\frac{1}{2}(\alpha\otimes\Delta)\gamma(x)\Big)-\frac{1}{4}(\alpha\otimes\gamma)\gamma(x)\nonumber\\
&&=-(\gamma\otimes \alpha)\Delta(x)+(\gamma\otimes \alpha)\Delta(x)
+\Big(\frac{1}{2}(\Delta\otimes \alpha)\gamma(x)-\frac{1}{2}(\alpha\otimes\gamma)\Delta(x)\nonumber\\
&&\quad+\frac{1}{2}(\tau\otimes I)(\alpha\otimes\Delta)\gamma(x)\Big)
+\Big(\frac{1}{4}(\gamma\otimes \alpha)\gamma(x)-\frac{1}{4}(\alpha\otimes\gamma)\gamma(x)+\frac{1}{4}(\tau\otimes I)(\alpha\otimes\gamma)\gamma(x)\Big)\nonumber\\
&&+\Big(-\frac{1}{2}(\tau\otimes I)(\Delta\otimes \alpha)\gamma(x)+\frac{1}{2}(\tau\otimes I)(\alpha\otimes\gamma)\Delta(x)
-\frac{1}{2}(\alpha\otimes\Delta)\gamma(x)\Big)-\frac{1}{4}(\alpha\otimes\gamma)\gamma(x)\nonumber\\
&&=-\frac{1}{4}(\alpha\otimes\gamma)\gamma(x).\nonumber
\end{eqnarray}
The three other bracket cancel by (\ref{dplc2}) and (\ref{dplc3}).
\end{proof}
\begin{prop}
 Let $(C, \Delta, \gamma, \alpha)$ be a post-Hom-Lie coalgebra. Let us define the map $\tilde\Delta : C\rightarrow C\otimes C$ by
$$\tilde\Delta(x)=\Delta(x)+\frac{1}{2}\gamma(x)$$
for any $x\in C$. Then, $(C, \tilde\Delta, \alpha)$ is an admissible Hom-Lie coalgebra if and only if
$$(I+\xi+\xi^2)\Big(as_{\tilde\Delta}(x)-(\tau\otimes I)as_{\tilde\Delta}(x)\Big)=0.$$
\end{prop}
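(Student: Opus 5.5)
The plan is to reduce the Hom-co-Jacobi identity for the commutator cobracket $\tilde\gamma=(1-\tau)\circ\tilde\Delta$ to an expression in the associator $as_{\tilde\Delta}=(\tilde\Delta\otimes\alpha)\circ\tilde\Delta-(\alpha\otimes\tilde\Delta)\circ\tilde\Delta$. This is the coalgebra analogue of the classical fact that a product is Lie-admissible if and only if the alternating sum of its associator over $\mathcal{S}_3$ vanishes.

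\textbf{Step 1: the general identity.} First I would expand, for an arbitrary comultiplication $\tilde\Delta$ and linear map $\alpha$,
$$(\alpha\otimes\tilde\gamma)\circ\tilde\gamma=(\alpha\otimes\tilde\Delta)\tilde\Delta-(I\otimes\tau)(\alpha\otimes\tilde\Delta)\tilde\Delta-\xi^{2}(\tilde\Delta\otimes\alpha)\tilde\Delta+\xi^{2}(\tau\otimes I)(\tilde\Delta\otimes\alpha)\tilde\Delta .$$
Here I use $\tau\otimes$-manipulations of the form $(\alpha\otimes\tilde\Delta)\tau=\xi^2(\tilde\Delta\otimes\alpha)$, up to the precise cyclic convention, which I would check on elements $x_{(1)}\otimes x_{(2)}$. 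I would then apply $I+\xi+\xi^2$. Every one of the resulting twelve terms is $\Phi_\sigma$ applied to either $(\tilde\Delta\otimes\alpha)\tilde\Delta$ or $(\alpha\otimes\tilde\Delta)\tilde\Delta$. Matching them with signs gives
$$(I+\xi+\xi^2)(\alpha\otimes\tilde\gamma)\tilde\gamma=-\sum_{\sigma\in\mathcal{S}_3}\mathrm{sgn}(\sigma)\,\Phi_\sigma\, as_{\tilde\Delta}$$
(up to an overall sign). Since $\xi$ is even and $\tau\otimes I$ is odd, the right-hand side equals $\pm(I+\xi+\xi^2)\big(as_{\tilde\Delta}-(\tau\otimes I)as_{\tilde\Delta}\big)$. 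This step uses nothing about post-Hom-Lie structure.

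\textbf{Step 2: conclude.} The remark after the definition of Hom-Lie admissible coalgebras says $\tilde\gamma$ is automatically skew-cocommutative. So $(C,\tilde\Delta,\alpha)$ is Hom-Lie admissible exactly when $(I+\xi+\xi^2)(\alpha\otimes\tilde\gamma)\tilde\gamma=0$. By Step 1, this holds exactly when $(I+\xi+\xi^2)\big(as_{\tilde\Delta}-(\tau\otimes I)as_{\tilde\Delta}\big)=0$, which is the statement.

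As a consistency check, I would substitute Lemma \ref{le1}, which gives $-\frac12(I+\xi+\xi^2)(\tau\otimes I)(\gamma\otimes\alpha)\gamma$. Using (\ref{re1}), (\ref{re2}) and (\ref{dplc2}), this should be compared with the Hom-co-Jacobi sum of $\gamma$, but the characterization itself does not need this.

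\textbf{Main obstacle.} The hardest part is the bookkeeping in Step 1: verifying that the twelve permuted terms pair up exactly into the alternating sum, with the correct signs under the paper's conventions $\xi=(I\otimes\tau)(\tau\otimes I)$. I would first write each term in Sweedler notation as $\Phi_\sigma$ of one of the two basic terms and record $\sigma$. I would then check that each $\sigma\in\mathcal{S}_3$ appears once with $(\tilde\Delta\otimes\alpha)\tilde\Delta$ and once with $(\alpha\otimes\tilde\Delta)\tilde\Delta$, with opposite signs.
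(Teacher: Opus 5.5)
Your argument is correct, but it follows a different route from the paper.

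The paper's entire proof is ``It comes from Lemma~\ref{le1}''. That lemma uses the post-Hom-Lie axioms (\ref{dplc2})--(\ref{dplc4}) to rewrite $as_{\tilde\Delta}-(\tau\otimes I)as_{\tilde\Delta}$ as a multiple of $(\tau\otimes I)(\gamma\otimes\alpha)\gamma=\xi(\alpha\otimes\gamma)\gamma$. The operator $I+\xi+\xi^2$ kills this by the Hom-co-Jacobi identity. So what the paper actually extracts is that the stated condition holds automatically. The equivalence between that condition and the Hom-co-Jacobi identity for $(1-\tau)\circ\tilde\Delta$ is never written down; at best it is borrowed from the earlier proposition asserting admissibility outright.

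Your Step~1 is precisely that missing link, and it checks out. With $A=(\tilde\Delta\otimes\alpha)\tilde\Delta$ and $B=(\alpha\otimes\tilde\Delta)\tilde\Delta$ one gets $(\alpha\otimes\tilde\gamma)\tilde\gamma=B-(I\otimes\tau)B-\xi^2A+\xi^2(\tau\otimes I)A$. Since $(I+\xi+\xi^2)\xi^2=I+\xi+\xi^2$ and $I\otimes\tau=\xi(\tau\otimes I)$, this gives $(I+\xi+\xi^2)(\alpha\otimes\tilde\gamma)\tilde\gamma=-(I+\xi+\xi^2)(as_{\tilde\Delta}-(\tau\otimes I)as_{\tilde\Delta})$, with exactly the sign you predicted.

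What your route buys:
\begin{itemize}
\item It proves the ``if and only if'' itself, and for any comultiplication and any $\alpha$.
\item It uses neither the post-Hom-Lie axioms nor the specific choice $\Delta+\frac12\gamma$.
\item It does not depend on the computation in Lemma~\ref{le1}. In the paper, that lemma's stated right-hand side and the expression reached at the end of its proof do not agree, though any such multiple is annihilated by $I+\xi+\xi^2$ anyway.
\end{itemize}

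The paper's route, in turn, gives the extra fact that the condition is always satisfied. That is exactly what your ``consistency check'' would recover.
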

\begin{proof}
 It comes from Lemma \ref{le1}.
\end{proof}

\begin{defn}
 Let $(C, \gamma, \alpha)$ be a Hom-Lie coalgebra. A linear map $R : C\rightarrow C$ is called a Rota-Baxter operator on $C$ of weight $\lambda\in\mathbb{K}$
if 
\begin{eqnarray}
    R\circ\alpha=\alpha\circ R,
\end{eqnarray}
\begin{eqnarray}
 (R\otimes R)\gamma(x)=((R\otimes I)\gamma+(I\otimes R)\gamma+\lambda\gamma)R(x)
\end{eqnarray}
for any $x\in C$.
\end{defn}
In this case, we say that $(C, \gamma, R, \alpha)$ is a Rota-Baxter Hom-Lie coalgebra.

\begin{prop}
 Let $(C, \gamma, R, \alpha)$ be a Rota-Baxter Hom-Lie coalgebra. Let us define
$$\tilde\gamma=\lambda\gamma\quad\mbox{and}\quad\tilde\Delta=(R\otimes I)\circ\gamma.$$
Then, $(C, \tilde\Delta, \tilde\gamma, \alpha)$ is a post-Hom-Lie coalgebra.
\end{prop}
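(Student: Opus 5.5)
Since $\tilde\gamma=\lambda\gamma$ and $(C,\gamma,\alpha)$ is a Hom-Lie coalgebra, axioms (\ref{dplc1}) and (\ref{dplc2}) for $\tilde\gamma$ are immediate: skew-cocommutativity, comultiplicativity and the Hom-co-Jacobi identity are linear in $\gamma$ (the co-Jacobi expression scales by $\lambda^2$). The work is in (\ref{dplc3}) and (\ref{dplc4}) for the pair $(\tilde\Delta,\tilde\gamma)$, with $\tilde\Delta=(R\otimes I)\circ\gamma$. Throughout I will use $R\circ\alpha=\alpha\circ R$ to move $R$ past $\alpha$. I will also use the Rota-Baxter identity, written in Sweedler form as
$$R(x_1)\otimes R(x_2)=R(x)_1'\otimes R(x)_2' ,$$
where the right side abbreviates $\big((R\otimes I)\gamma+(I\otimes R)\gamma+\lambda\gamma\big)(R(x))$. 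The plan is to reduce each axiom to the Hom-co-Jacobi identity of $\gamma$, after factoring out an operator of the form $R\otimes I\otimes I$ (or a permutation of it).

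For (\ref{dplc3}), I expand the three terms:
$$(\alpha\otimes\tilde\gamma)\tilde\Delta=\lambda(\alpha R\otimes\gamma)\gamma,\qquad (\tilde\Delta\otimes\alpha)\tilde\gamma=\lambda(R\otimes I\otimes I)(\gamma\otimes\alpha)\gamma,$$
$$(\tau\otimes I)(\alpha\otimes\tilde\Delta)\tilde\gamma=\lambda(\tau\otimes I)(\alpha\otimes R\otimes I)(\alpha\otimes\gamma)\gamma .$$
The last term equals $\lambda(R\otimes I\otimes I)(\tau\otimes I)(\alpha\otimes\gamma)\gamma$. Hence the whole expression is
$$\lambda(R\otimes I\otimes I)\Big[(\alpha\otimes\gamma)\gamma-(\gamma\otimes\alpha)\gamma-(\tau\otimes I)(\alpha\otimes\gamma)\gamma\Big].$$
By (\ref{re1}), (\ref{re2}) and skew-symmetry, the bracket is, up to sign, exactly the combination shown to vanish by Hom-co-Jacobi in the proof of the earlier proposition (the one with $\tilde\Delta=\Delta+\gamma$). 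So (\ref{dplc3}) holds. This step involves only $\gamma$ and not the Rota-Baxter relation.

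For (\ref{dplc4}), the two terms $(\tilde\Delta\otimes\alpha)\tilde\Delta$ and $(\tau\otimes I)(\tilde\Delta\otimes\alpha)\tilde\Delta$ contain $R$ twice in the first two tensor slots: they are $(R\otimes I\otimes I)(\gamma\otimes\alpha)(R\otimes I)\gamma$ and its $\tau$-flip. This is where the Rota-Baxter identity enters.
\begin{itemize}
\item I rewrite $(\gamma\circ R)$ through $(R\otimes I)$ applied to $\gamma$, using the defining identity read backwards: $(R\otimes R)\gamma=\big((R\otimes I)\gamma+(I\otimes R)\gamma+\lambda\gamma\big)R$. Concretely, $(R\otimes I)\gamma R$ is expressed as $(R\otimes R)\gamma$ minus the terms $(I\otimes R)\gamma R$ and $\lambda\gamma R$.
\item Once the double-$R$ terms are replaced, I sum the expression with its $\tau$-flip. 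Terms of type $(R\otimes R\otimes\alpha)(\gamma\otimes I)\gamma$ cancel against their flips because $\gamma$ is skew.
\item The remaining terms, together with $-(\alpha\otimes\tilde\Delta)\tilde\Delta+(\tau\otimes I)(\alpha\otimes\tilde\Delta)\tilde\Delta$ and $(\tilde\gamma\otimes\alpha)\tilde\Delta=\lambda(\gamma\otimes\alpha)(R\otimes I)\gamma$, should regroup into $(R\otimes I\otimes I)$ (resp.\ $(I\otimes R\otimes I)$) applied to a cyclic Hom-co-Jacobi sum. That sum vanishes.
\item The $\lambda\gamma R$ contribution from the Rota-Baxter identity is what cancels the new term $(\tilde\gamma\otimes\alpha)\tilde\Delta$. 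This is the coalgebraic shadow of the classical fact that $x\circ y=[R(x),y]$ together with $\lambda[\,,]$ gives a post-Lie algebra.
\end{itemize}

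The main obstacle is the bookkeeping in (\ref{dplc4}): applying the Rota-Baxter identity in the correct tensor slot and under the correct $\alpha$-twist, and tracking which permutation $\Phi_\sigma$ each term carries. To control this, I will first dualize mentally. I verify the identity for the post-Lie algebra on $C^*$ with products $x\circ y=[R^*x,y]$ and $\lambda[x,y]$, where it is the standard computation. I then transcribe each step to the comultiplicative side by transposing, following the pattern of the proof of (\ref{c2}) in the Rota-Baxter--tridendriform theorem. If the weight sign convention turns out mismatched, I would try $\tilde\gamma=-\lambda\gamma$ as a check, but I expect $\lambda\gamma$ to work with the given form of (\ref{dplc4}).
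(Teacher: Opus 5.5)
Your treatment of (\ref{dplc1})--(\ref{dplc3}) is correct and matches the paper's argument: pull out $R\otimes I\otimes I$ and reduce to the Hom-co-Jacobi identity. There is one slip: $(\alpha\otimes\tilde\Delta)\tilde\gamma=\lambda(I\otimes R\otimes I)(\alpha\otimes\gamma)\gamma$, not $\lambda(\alpha\otimes R\otimes I)(\alpha\otimes\gamma)\gamma$, which would put $\alpha^2$ in the first slot. Your next line is nevertheless right.

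The gap is in (\ref{dplc4}), which carries the real content, and there the cancellation scheme in your bullets is wrong in two ways.

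\emph{The $(R\otimes R)$-terms do not cancel.} In (\ref{dplc4}) the flipped term is subtracted, and $(\tau\otimes I)\big((R\otimes R)\gamma\otimes\alpha\big)\gamma=-\big((R\otimes R)\gamma\otimes\alpha\big)\gamma$. So these terms reinforce rather than cancel.

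\emph{They also must not disappear.} Using $R\alpha=\alpha R$, the remaining pair is $-(\alpha\otimes\tilde\Delta)\tilde\Delta+(\tau\otimes I)(\alpha\otimes\tilde\Delta)\tilde\Delta=(R\otimes R\otimes I)\big[-(\alpha\otimes\gamma)\gamma+(\tau\otimes I)(\alpha\otimes\gamma)\gamma\big]$. It carries two copies of $R$, so it cannot be absorbed into a single factor $R\otimes I\otimes I$ or $I\otimes R\otimes I$ in front of a vanishing Hom-co-Jacobi sum. The Hom-co-Jacobi identity, in the same form you used for (\ref{dplc3}), turns it into $-(R\otimes R\otimes I)(\gamma\otimes\alpha)\gamma$. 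That term is generally nonzero, and it is exactly what the $(R\otimes R)$-term has to cancel. Following your outline as written leaves this residue.

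The correct bookkeeping is shorter and uses the Rota--Baxter identity forwards, once. By skew-symmetry alone, $(\tau\otimes I)(\tilde\Delta\otimes\alpha)\tilde\Delta=\big((\tau(R\otimes I)\gamma R)\otimes\alpha\big)\gamma=-\big((I\otimes R)\gamma R\otimes\alpha\big)\gamma$. Hence
\[
(\tilde\Delta\otimes\alpha)\tilde\Delta-(\tau\otimes I)(\tilde\Delta\otimes\alpha)\tilde\Delta+(\tilde\gamma\otimes\alpha)\tilde\Delta
=\Big(\big((R\otimes I)\gamma+(I\otimes R)\gamma+\lambda\gamma\big)R\otimes\alpha\Big)\gamma=(R\otimes R\otimes I)(\gamma\otimes\alpha)\gamma,
\]
which cancels the residue above. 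This is what the paper does: everything collapses to the Rota--Baxter expression tensored with $\alpha$, composed with $\gamma$.

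Your remark that the $\lambda\gamma R$ part absorbs $(\tilde\gamma\otimes\alpha)\tilde\Delta$ is right. It also settles the sign, so there is no need to hedge toward $-\lambda\gamma$.

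Your dualization fallback is legitimate and would produce the same computation. Functionals $f\otimes g\otimes h$ with $f,g,h\in C^*$ separate points of $C^{\otimes 3}$, and the dual product is $f\circ g=[R^*f,g]$ on a Rota--Baxter Hom-Lie algebra of weight $\lambda$. But you only announce it, so it does not fill the gap.
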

\begin{proof}
The first two axioms are trivial. Next, 
\begin{eqnarray}
&&\qquad(\tilde\Delta\otimes \alpha)\tilde\Delta(x)-(\alpha\otimes \tilde\Delta)\tilde\Delta(x)-(\tau\otimes I)(\tilde\Delta\otimes \alpha)\tilde\Delta(x)\nonumber\\ 
&&\hspace{6cm}+(\tau\otimes I)(\alpha\otimes \tilde\Delta)\tilde\Delta(x)\! +\!(\tilde\gamma\otimes \alpha)\tilde\Delta(x)=\nonumber\\
&&=((R\otimes I)\gamma\otimes \alpha)(R\otimes I)\gamma(x)-(\alpha\otimes(R\otimes I)\gamma)(R\otimes I)\gamma(x)\nonumber\\
&&-(\tau\otimes I)((R\otimes I)\gamma\otimes \alpha)(R\otimes I)\gamma(x)\nonumber\\
&&\quad+(\tau\otimes I)(\alpha\otimes(R\otimes I)\gamma)(R\otimes I)\gamma(x)+\lambda(\gamma\otimes \alpha)(R\otimes I)\gamma(x)\nonumber\\
&&=((R\otimes I)\gamma\otimes \alpha)(R\otimes I)\gamma(x)
+\Big(-(\alpha\otimes(R\otimes I)\gamma)(R\otimes I)\gamma(x)+(\tau\otimes I)(\alpha\otimes \tilde\Delta)\Delta(x)\Big)\nonumber\\
&&\quad -\!(\tau\otimes I)(\Delta\otimes \alpha)\Delta(x)+\lambda(\gamma\otimes \alpha)(R\otimes I)\gamma(x)\nonumber\\
&&=(R\otimes I\otimes I)(\gamma\otimes \alpha)(R\otimes I)\gamma(x)-(R\otimes R\otimes I)(\gamma\otimes \alpha)\gamma(x)\nonumber\\
&&\quad+(\tau\otimes I)((R\otimes I)\gamma\otimes \alpha)(R\otimes I)\gamma(x)+\lambda(\gamma R\otimes I)\gamma(x)\nonumber\\
&&=\Big((R\otimes I\otimes I)(\gamma R\otimes \alpha)-(R\otimes R\otimes I)(\gamma\otimes \alpha)+((I\otimes R)\gamma R\otimes \alpha)\gamma
+\lambda(\gamma R\otimes \alpha)\Big)\gamma(x)\nonumber\\
&&=\Big((R\otimes I)\gamma R\otimes \alpha-(R\otimes R)\gamma\otimes \alpha+(I\otimes R)\gamma R\otimes \alpha+\lambda\gamma R\otimes \alpha\Big)\gamma(x)\nonumber\\
&&=\Big\{\Big((R\otimes I)\gamma R-(R\otimes R)\gamma+(I\otimes R)\gamma R+\lambda\gamma R\Big)\otimes \alpha\Big\}\gamma(x)\nonumber\\
&&=0.\nonumber
\end{eqnarray}
Finally,
\begin{eqnarray}
&& \qquad(\alpha\otimes\tilde\gamma)\tilde\Delta-(\tilde\Delta\otimes \alpha)\tilde\gamma-(\tau\otimes I)(\alpha\otimes\tilde\Delta)\tilde\gamma=\nonumber\\
&&=(\alpha\otimes\lambda\gamma)(R\otimes I)\gamma-((R\otimes I)\gamma\otimes \alpha)\lambda\gamma
-(\tau\otimes I)(\alpha\otimes(R\otimes I)\gamma)\lambda\gamma\nonumber\\
&&=\lambda(R\otimes\gamma)\gamma-\lambda(R\otimes I\otimes \alpha)(\gamma\otimes I)\gamma
-\lambda(\tau\otimes I)(\alpha\otimes R\otimes I)(I\otimes\gamma)\gamma\nonumber\\
&&=\lambda\Big((R\otimes\gamma)\gamma+(R\otimes I\otimes \alpha)(\tau\gamma\otimes I)\gamma
+(\tau\otimes I)(\alpha\otimes R\otimes I)(I\otimes\tau\gamma)\gamma\Big)\nonumber\\
&&=\lambda\Big(R(x_{1})\otimes x_{21}\otimes x_{22}+R(x_{21})\otimes x_{22}\otimes \alpha(x_{1})+R(x_{22})\otimes\alpha(x_{1})\otimes x_{21}\Big)\nonumber\\
&&=\lambda\Big(\alpha(x_{1})\otimes x_{21}\otimes x_{22}+x_{21}\otimes x_{22}\otimes \alpha(x_{1})+x_{22}\otimes\alpha(x_{1})\otimes x_{21}\Big)\nonumber.
\end{eqnarray}
The right hand side cancels by Hom-coJacobi identity. This achieves the proof.
\end{proof}

\begin{prop}
 Let $(C, \Delta, \gamma,\alpha)$ be a post-Hom-Lie coalgebra and $(C', \Delta',\alpha')$ a cocommutative Hom-coalgebra. Then, $C'\otimes C$ is a post-Hom-Lie
coalgebra with
\begin{eqnarray}
 \tilde\Delta(a\otimes x)&=&(I\otimes \tau\otimes I)(\Delta'(a)\otimes\Delta(x))=a_{(1)}\otimes x_{(1)}\otimes a_{(2)}\otimes x_{(2)},\nonumber\\
\tilde\gamma(a\otimes x)&=&(I\otimes \tau\otimes I)(\Delta'(a)\otimes\gamma(x))=a_{(1)}\otimes x_{1}\otimes a_{(2)}\otimes x_{2},\nonumber\\
\tilde\alpha(a\otimes  x)&=&\alpha'(a)\otimes\alpha(x) \nonumber
\end{eqnarray}
for any $a\in C', x\in C$.
\end{prop}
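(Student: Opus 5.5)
The plan is to verify the four axioms (\ref{dplc1})--(\ref{dplc4}) for $(C'\otimes C,\tilde\Delta,\tilde\gamma,\tilde\alpha)$ in Sweedler notation. Throughout, $\Delta'(a)=a_{(1)}\otimes a_{(2)}$, and we assume the standard hypotheses: $\Delta'$ is Hom-coassociative, cocommutative ($\tau\circ\Delta'=\Delta'$) and comultiplicative with respect to $\alpha'$. The idea is that every composite of $\tilde\Delta,\tilde\gamma,\tilde\alpha$ takes the form
\[
a_{(1)(1)}\otimes X_1\otimes a_{(1)(2)}\otimes X_2\otimes \alpha'(a_{(2)})\otimes X_3
\quad\text{or}\quad
\alpha'(a_{(1)})\otimes X_1\otimes a_{(2)(1)}\otimes X_2\otimes a_{(2)(2)}\otimes X_3,
\]
where $X_1\otimes X_2\otimes X_3$ is the corresponding composite in $C$. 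Hom-coassociativity identifies the two $C'$-parts, so all $C'$-factors carry a common tensor $a_{[1]}\otimes a_{[2]}\otimes a_{[3]}:=(\Delta'\otimes\alpha')\Delta'(a)$. Each axiom on $C'\otimes C$ then reduces, after applying the shuffle $C'^{\otimes3}\otimes C^{\otimes3}\to(C'\otimes C)^{\otimes3}$, to $a_{[1]}\otimes a_{[2]}\otimes a_{[3]}$ tensored with the same axiom in $C$.

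The key steps run in order. First, skew-cocommutativity of $\tilde\gamma$: $\tau\tilde\gamma(a\otimes x)=a_{(2)}\otimes x_2\otimes a_{(1)}\otimes x_1$. Cocommutativity of $\Delta'$ lets us swap $a_{(1)},a_{(2)}$, and then $\gamma=-\tau\gamma$ gives $-\tilde\gamma$. Comultiplicativity $\tilde\gamma\tilde\alpha=\tilde\alpha^{\otimes2}\tilde\gamma$ follows from $\Delta'\alpha'=\alpha'^{\otimes2}\Delta'$ together with $\gamma\alpha=\alpha^{\otimes2}\gamma$.

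Second, the Hom-co-Jacobi identity and (\ref{dplc3}), (\ref{dplc4}) each involve permutations of the three tensor slots: $\xi$, $\xi^2$ and $\tau\otimes I$. Such a permutation acts simultaneously on the $C'$-factors and the $C$-factors. So I will establish a lemma: $a_{[1]}\otimes a_{[2]}\otimes a_{[3]}$ is invariant under every permutation in $\mathcal S_3$, i.e.\ under $\Phi_\sigma$. Invariance under $\tau\otimes I$ is immediate from cocommutativity applied to the inner $\Delta'$. Invariance under $I\otimes\tau$ follows by first rewriting the tensor as $(\alpha'\otimes\Delta')\Delta'(a)$ and then applying cocommutativity to the inner $\Delta'$. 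These two transpositions generate $\mathcal S_3$.

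Once the lemma is in hand, the permutations may be moved off the $C'$-part, and each term of the axiom becomes $\mathrm{sh}\bigl(a_{[1]}\otimes a_{[2]}\otimes a_{[3]}\otimes T(x)\bigr)$, where $T$ is the corresponding term in $C$ and $\mathrm{sh}$ is the shuffle isomorphism. Summing the terms gives $\mathrm{sh}(a_{[\,]}\otimes 0)=0$ by (\ref{dplc2}), (\ref{dplc3}) and (\ref{dplc4}) in $C$. Mixed terms such as $(\tilde\gamma\otimes\tilde\alpha)\tilde\Delta$ or $(\tilde\alpha\otimes\tilde\gamma)\tilde\Delta$ are handled identically, since $\gamma$ and $\Delta$ induce the same $C'$-pattern.

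The main obstacle is bookkeeping. Terms of type $(\tilde\alpha\otimes\tilde\Delta)\tilde\Delta$ produce the pattern $(\alpha'\otimes\Delta')\Delta'$, while terms of type $(\tilde\Delta\otimes\tilde\alpha)\tilde\Delta$ produce $(\Delta'\otimes\alpha')\Delta'$. Hom-coassociativity is essential precisely here, so that a single common $C'$-tensor factors out of all five terms of (\ref{dplc4}). I would write out (\ref{dplc4}) explicitly as the representative case, and state that (\ref{dplc2}) and (\ref{dplc3}) are identical in structure.
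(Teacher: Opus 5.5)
Your proposal is correct and follows essentially the same route as the paper: write each composite as the shuffle of a $C'$-part and the corresponding $C$-part, then reduce every axiom on $C'\otimes C$ to the same axiom in $C$. Your explicit lemma that $(\Delta'\otimes\alpha')\Delta'(a)$ is $\mathcal S_3$-invariant (from cocommutativity plus Hom-coassociativity) makes precise a step the paper's computation uses tacitly, and so does your comultiplicativity assumption on $\alpha'$, which is needed for $\tilde\gamma\circ\tilde\alpha=\tilde\alpha^{\otimes 2}\circ\tilde\gamma$ and which the paper omits.
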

\begin{proof} For all $a\in C', x\in C$,\\
Skew-cosymmetry :
\begin{eqnarray}
 \tilde\gamma(a\otimes x)
&=&-(I\otimes\tau\otimes I)(\tau\Delta(a)\otimes\tau\gamma(x))=-(I\otimes\tau\otimes I)(a_{(2)}\otimes a_{1}\otimes x_{(2)}\otimes x_{1})\nonumber\\
&=&-a_{(2)}\otimes x_{2}\otimes a_{(1)}\otimes x_{1}=-\eta(a_{(1)}\otimes x_{1}\otimes a_{(2)}\otimes x_{2})\nonumber\\
&=&-\eta\tilde\gamma(a\otimes x)\nonumber.
\end{eqnarray}
Hom-CoJacobi identity :
\begin{eqnarray}
 ({\bf \tilde\alpha}\otimes\tilde\gamma)\tilde\gamma(a\otimes x)&=&({\bf \tilde\alpha}\otimes\tilde\gamma)(a_{(1)}\otimes x_{1}\otimes a_{(2)}\otimes x_{2})\nonumber\\
&=&\alpha'(a_{(1)})\otimes \alpha(x_{1})\otimes\tilde\gamma(a_{(2)}\otimes x_{2})\nonumber\\
&=&\alpha'(a_{(1)})\otimes \alpha(x_{1})\otimes a_{(2)(1)}\otimes x_{21}\otimes a_{(2)(2)}\otimes x_{22},\nonumber
\end{eqnarray}
and
\begin{eqnarray}
 &&({\bf \tilde\alpha}\otimes\tilde\gamma)\tilde\gamma(a\otimes x)+\xi({\bf \tilde\alpha}\otimes\tilde\gamma)\tilde\gamma(a\otimes x)
+\xi^2({\bf \tilde\alpha}\otimes\tilde\gamma)\tilde\gamma(a\otimes x)=\nonumber\\
&&\qquad=\alpha'(a_{(1)})\otimes\alpha(x_{1})\otimes a_{(2)(1)}\otimes x_{21}\otimes a_{(2)(2)}\otimes x_{22}
+a_{(2)(1)}\otimes x_{21}\otimes a_{(2)(2)}\otimes x_{22}\otimes \alpha'(a_{(1)})\otimes \alpha(x_{1})\nonumber\\
&&\qquad\quad+a_{(2)(2)}\otimes x_{22}\otimes \alpha'(a_{(1)})\otimes\alpha(x_{1})\otimes a_{(2)(1)}\otimes x_{21}\nonumber\\
&&\qquad=({\bf I}\otimes \tau\otimes{\bf I})(I\otimes\tau\otimes\tau\otimes I)
\alpha'(a_{(1)})\otimes a_{(2)(1)}\otimes a_{(2)(2)}\otimes\Big(\alpha(x_{1})\otimes x_{21}\otimes x_{22}\nonumber\\
&&\qquad\quad+x_{21}\otimes x_{22}\otimes x_{1}+x_{22}\otimes x_{1}\otimes x_{21}\Big).\nonumber
\end{eqnarray}
Which cancels by Hom-coJacobi identity in $C$.\\
Let us verify axiom (\ref{dplc3}) for $\tilde\Delta$ and $\tilde\gamma$ :
\begin{eqnarray}
&&\qquad ({\bf (\tilde\alpha}\otimes\tilde\Delta)\tilde\gamma+(\eta\otimes{\bf I} )(\tilde\Delta\otimes {\bf \tilde\alpha})\tilde\gamma
-(\eta\otimes {\bf I})({\bf \tilde\alpha}\otimes\tilde\gamma)\tilde\alpha))(a\otimes x)=\nonumber\\
&&=({\bf \tilde\alpha}\otimes\tilde\Delta)(a_{(1)}\otimes x_{1}\otimes a_{(2)}\otimes x_{2})
+(\eta\otimes {\bf I})(\tilde\Delta\otimes {\bf \tilde\alpha})(a_{(1)}\otimes x_{1}\otimes a_{(2)}\otimes x_{2})\nonumber\\
&&\quad-(\eta\otimes {\bf I})({\bf \tilde\alpha}\otimes\tilde\gamma)(a_{(1)}\otimes x_{1}\otimes a_{(2)}\otimes x_{2})\nonumber\\
&&=\alpha'(a_{(1)})\otimes \alpha(x_{1})\otimes\tilde\Delta(a_{(2)}\otimes x_{2})
+(\tau\otimes {\bf I})(\tilde\Delta(a_{(1)}\otimes x_{1})\otimes \alpha'(a_{(2)})\otimes \alpha(x_{2}))\nonumber\\
&&\quad-(\eta\otimes {\bf I})(\alpha'(a_{(1)})\otimes \alpha(x_{1})\otimes\gamma(a_{(2)}\otimes x_{2}))\nonumber\\
&&=\alpha'(a_{(1)})\otimes\alpha(x_{1})\otimes a_{(2)(1)}\otimes x_{2(1)}\otimes a_{(2)(2)}\otimes x_{2(2)}
+a_{(1)(2)}\otimes x_{1(2)}\otimes a_{(1)(1)}\otimes x_{1(1)}\otimes \alpha'(a_{(2)})\otimes \alpha(x_{2})\nonumber\\
&&\quad-a_{(2)(1)}\otimes x_{(2)1}\otimes \alpha'(a_{(1)})\otimes\alpha(x_{(1)})\otimes a_{(2)(2)}\otimes x_{(2)(2)}\nonumber\\
&&=({\bf I}\otimes \tau\otimes{\bf I})(I\otimes\tau\otimes\tau\otimes I)
\alpha'(a_{(1)})\otimes a_{(2)(1)}\otimes a_{(2)(2)}\otimes\Big(\alpha(x_{1})\otimes \otimes x_{2(1)}\otimes \otimes x_{2(2)}\nonumber\\
&&\quad+x_{1(2)}\otimes x_{1(1)}\otimes \alpha(x_{2})-x_{(2)1}\otimes\alpha(x_{(1)})\otimes x_{(2)(2)}\Big).\nonumber
\end{eqnarray}
The right hand side vanishes by (\ref{dplc3a}).\\
Now let us verify axiom (\ref{dplc4}) for $\tilde\Delta$ and $\tilde\gamma$ :\\
As,
\begin{eqnarray}
 as_{\tilde\Delta}(a\otimes x)
&=&(\tilde\Delta\otimes \tilde\alpha)\tilde\Delta(a\otimes x)-(\tilde\alpha\otimes\tilde\Delta)\tilde\Delta(a\otimes x)\nonumber\\
&=&\tilde\Delta(a_{(1)}\otimes x_{(1)})\otimes \alpha'(a_{(2)})\otimes \alpha(x_{(2)})-\alpha'(a_{(1)})\otimes \alpha(x_{(1)})\otimes\tilde\Delta(a_{(2)}\otimes x_{2})\nonumber\\
&=&a_{(1)(1)}\otimes x_{(1)(1)}\otimes a_{(1)(2)}\otimes x_{(1)(2)}\otimes\alpha'(a_{(2)})\otimes \alpha(x_{(2)})\nonumber\\
&&-\alpha(a_{(1)})\otimes\alpha(x_{(1)})\otimes a_{(2)(1)}\otimes x_{(2)(1)}\otimes a_{(2)(2)}\otimes x_{(2)(2)}\nonumber,
\end{eqnarray}
and
\begin{eqnarray}
 (\tilde\gamma\otimes \tilde\alpha)\tilde\Delta(a\otimes x)
=a_{(1)(1)}\otimes x_{1(1)}\otimes a_{(1)(2)}\otimes x_{(1)2}\otimes\alpha'(a_{(2)})\otimes \alpha(x_{(2)}),\nonumber
\end{eqnarray}

it comes
\begin{eqnarray}
 &&\qquad(\tilde\gamma\otimes \tilde\alpha)\tilde\Delta(a\otimes x)-(\eta\otimes I)(\tilde\gamma\otimes \tilde\alpha)\tilde\Delta(a\otimes x)=\nonumber\\
&&=a_{(1)(1)}\otimes x_{(1)(1)}\otimes a_{(1)(2)}\otimes x_{(1)(2)}\otimes \alpha'(a_{(2)})\otimes \alpha(x_{(2)})\nonumber\\
&&\quad\quad-\alpha'(a_{(1)})\otimes \alpha(x_{(1)})\otimes a_{(2)(1)}\otimes x_{(2)(1)}\otimes a_{(2)(2)}\otimes x_{(2)(2)}\nonumber\\
&&\quad-a_{(1)(2)}\otimes x_{(1)(2)}\otimes a_{(1)(1)}\otimes x_{(1)(1)}\otimes \alpha'(a_{(2)})\otimes \alpha(x_{(2)})\nonumber\\
&&\quad\quad-a_{(2)(1)}\otimes x_{(2)(1)}\otimes \alpha'(a_{(1)})\otimes \alpha(x_{(1)})\otimes a_{(2)(2)}\otimes x_{(2)(2)}\nonumber\\
&&\quad+a_{(1)(1)}\otimes x_{(1)1}\otimes a_{(1)(2)}\otimes x_{(1)2}\otimes \alpha'(a_{(2)})\otimes \alpha(x_{(2)})\nonumber\\
&&=({\bf I}\otimes\tau\otimes {\bf I})(I\otimes\tau\otimes\tau\otimes I)a_{(1)(1)}\otimes a_{(1)(2)}\otimes \alpha'(a_{(2)})\otimes\Big( 
x_{(1)(1)}\otimes x_{(1)(2)}\otimes \alpha(x_{(2)})\nonumber\\
&&\quad-\alpha(x_{(1)})\otimes x_{(2)(1)}\otimes x_{(2)(2)}-x_{(1)(2)}\otimes x_{(1)(1)}\otimes \alpha(x_{(2)})
-x_{(2)(1)}\otimes \alpha(x_{(1)})\otimes x_{(2)(2)}\nonumber\\
&&\quad+x_{(1)1}\otimes x_{(1)2}\otimes \alpha(x_{(2)})\Big).\nonumber
\end{eqnarray}
The right hand side vanishes by (\ref{dplc4a})
\end{proof}
\subsection{Connexion with Hom-tridendriform coalgebras}
\begin{defn}\label{chd}
 A cocommutative Hom-tridendriform coalgebra is a quadruple $(T, \Delta_\star, \Delta_\cdot, \alpha)$ in which $(T, \Delta_\cdot, \alpha)$
 is a cocommutative Hom-coassociative coalgebra and
  $\Delta_\star : T\rightarrow T\otimes T$ is a linear operation such that :
\begin{eqnarray}
( (\Delta_\star + \tau\circ\Delta_\star +\Delta_\cdot)\otimes\alpha )\circ\Delta_\star=(\alpha\otimes\Delta_\star)\circ\Delta_\star,\label{d3}
\end{eqnarray}
\begin{eqnarray}
 (\Delta_\star\otimes\alpha)\circ\Delta_\cdot =(\alpha\otimes\Delta_\cdot)\circ\Delta_\star.\label{d4}
\end{eqnarray}
\end{defn}
\begin{prop}
 Let $(T, \Delta_{-1}, \Delta_1, \Delta_0, \alpha)$ be a Hom-tridendriform coalgebra. Then, $(T, \gamma, \Delta_\ast, \alpha)$ is a post-Hom-Lie coalgebra, where
$\gamma=(1-\tau)\circ\Delta_0$ and $\Delta_\ast= \Delta_1-\tau\circ\Delta_{-1}$.  
\end{prop}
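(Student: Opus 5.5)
The plan is to verify the four axioms (\ref{dplc1})--(\ref{dplc4}) of Definition \ref{tdd} directly for $\gamma=(1-\tau)\circ\Delta_0$ and $\Delta_\ast=\Delta_1-\tau\circ\Delta_{-1}$. Throughout, I will use the permutation maps $\Phi_\sigma$ on $T^{\otimes 3}$ to move $\tau$'s past tensor products, as in the proof that a Hom-dendriform coalgebra gives a Hom-preLie coalgebra. The identity that does most of this work is
$$(\tau\circ\Delta_i\otimes\alpha)=\Phi_{(213)}\circ(\Delta_i\otimes\alpha),\qquad (\alpha\otimes\tau\circ\Delta_i)=\Phi_{(132)}\circ(\alpha\otimes\Delta_i).$$

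\emph{Step 1 (the Hom-Lie part).} Axiom (\ref{c7}) says that $(T,\Delta_0,\alpha)$ is Hom-coassociative, so Lemma \ref{Am1} shows that $(T,\gamma,\alpha)$ is a Hom-Lie coalgebra. This gives skew-cocommutativity and the Hom-co-Jacobi identity (\ref{dplc2}). The comultiplicativity $\gamma\circ\alpha=\alpha^{\otimes 2}\circ\gamma$ does not follow from the axioms of Definition \ref{htca}. I will therefore assume, as the paper implicitly does, that $\alpha$ is multiplicative with respect to $\Delta_0$.

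\emph{Step 2 (axiom (\ref{dplc3})).} Substituting the definitions turns the left side into a sum of composites of the form $(\alpha\otimes\Delta_j)\circ\Delta_i$ or $(\Delta_j\otimes\alpha)\circ\Delta_i$ with $i$ or $j$ equal to $0$, each preceded by some $\Phi_\sigma$. The relevant terms are $(\alpha\otimes\Delta_0)\circ\Delta_1$, $(\alpha\otimes\Delta_0)\circ\Delta_{-1}$, $(\Delta_1\otimes\alpha)\circ\Delta_0$, $(\Delta_{-1}\otimes\alpha)\circ\Delta_0$ and their mixed versions. I will group these terms according to the mixed axioms (\ref{c4}), (\ref{c5}) and (\ref{c6}). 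I expect each pair to cancel after the appropriate permutation: (\ref{c5}) cancels $(\alpha\otimes\Delta_0)\Delta_1$ against $(\Delta_1\otimes\alpha)\Delta_0$, (\ref{c6}) handles the $\Delta_{-1}$ terms with $\tau$ on the outside, and (\ref{c4}) handles the cross terms where $\Delta_{-1}$ and $\Delta_1$ meet $\Delta_0$.

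\emph{Step 3 (axiom (\ref{dplc4})).} This is the main obstacle. The expression $(\Delta_\ast\otimes\alpha)\Delta_\ast-(\alpha\otimes\Delta_\ast)\Delta_\ast$ is computed exactly as in the Hom-preLie proposition for Hom-dendriform coalgebras. The difference now is that (\ref{c1}) and (\ref{c3}) carry extra $\Delta_0$-terms, namely $(\alpha\otimes\Delta_0)\circ\Delta_{-1}$ and $(\Delta_0\otimes\alpha)\circ\Delta_1$. So after the dendriform-type cancellations via (\ref{c1})--(\ref{c3}), the combination $c_\alpha(\Delta_\ast)-\Phi_{(213)}\circ c_\alpha(\Delta_\ast)$ is no longer zero. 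What survives should consist of these $\Delta_0$-terms and their $\Phi_{(213)}$-images.

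The key check is that this remainder equals $-(\gamma\otimes\alpha)\circ\Delta_\ast$. Expanding,
$$(\gamma\otimes\alpha)\circ\Delta_\ast=(\Delta_0\otimes\alpha)\Delta_1-\Phi_{(213)}(\Delta_0\otimes\alpha)\Delta_1-(\Delta_0\otimes\alpha)\tau\Delta_{-1}+\Phi_{(213)}(\Delta_0\otimes\alpha)\tau\Delta_{-1}.$$
The last two terms can be rewritten with $\Phi_\sigma$ as $(\alpha\otimes\Delta_0)\Delta_{-1}$-type terms. I will match the four terms one by one with the leftovers from (\ref{c1}) and (\ref{c3}), watching the signs carefully. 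If some terms remain unmatched, I would then use (\ref{c6}) to convert $(\alpha\otimes\Delta_0)\Delta_{-1}$-type terms into the form needed.
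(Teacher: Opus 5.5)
Your proposal is correct and follows the same route as the paper. The Hom-Lie part comes from Lemma~\ref{Am1} applied to the Hom-coassociative $\Delta_0$ (axiom (\ref{c7})). The two compatibility axioms are then checked by direct expansion: (\ref{dplc3}) via (\ref{c4})--(\ref{c6}), and (\ref{dplc4}) via (\ref{c1})--(\ref{c3}), where the $\Delta_0$-terms left over from (\ref{c1}) and (\ref{c3}) exactly cancel $(\gamma\otimes\alpha)\circ\Delta_\ast$.

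Your remark that $\gamma\circ\alpha=\alpha^{\otimes 2}\circ\gamma$ needs the extra hypothesis that $\alpha$ be comultiplicative for $\Delta_0$ is justified; the paper passes over this point silently through Lemma~\ref{Am1}.
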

\begin{proof}
$(T, \gamma, \alpha)$ is a Hom-Lie coalgebra, by Lemma \ref{Am1}. Now, we have:
\begin{eqnarray}
 &&\epsilon^2\circ(\alpha\otimes\gamma)\circ\Delta_\ast-\epsilon^2\circ(\Delta_\ast\otimes\alpha)\circ\gamma-(I\otimes\tau)\circ(\alpha\otimes\Delta_\ast)\circ\gamma\nonumber\\
&&=\epsilon^2\circ(\alpha\otimes\Delta_0)\circ\Delta_1-\epsilon^2\circ(I\otimes\tau)\circ(\alpha\otimes\Delta_0)\circ\Delta_1-(\Delta_0\otimes\alpha)\circ\Delta_{-1}\nonumber\\
&&\quad+(\tau\otimes I)\circ(\Delta_0\otimes\alpha)\circ\Delta_{-1}-\epsilon^2\circ(\Delta_1\otimes_\alpha)\circ\Delta_0 +(I\otimes\tau)\circ(\Delta_{-1}\otimes\alpha)\circ\Delta_0\nonumber\\
&&\quad+\epsilon\circ(\alpha\otimes\Delta_1)\circ\Delta_0-(\tau\otimes I)\circ(\alpha\otimes\Delta_{-1})-(I\otimes\tau)\circ(\alpha\otimes\Delta_1)\circ\Delta_0\nonumber\\
&&\quad +(\alpha\otimes\Delta_{-1})\circ\Delta_0 + (I\otimes\tau)\circ\epsilon^2\circ(\Delta_1\otimes\alpha)\circ\Delta_0-\epsilon\circ(\Delta_{-1}\otimes\alpha)\circ\Delta_0.
\end{eqnarray}
The left hand side vanishes by axioms in Definition \ref{tdd}.\\
Next
 \begin{eqnarray}
  &&(\alpha\otimes\Delta_\ast)\circ\Delta_\ast-(\tau\otimes I)\circ(\alpha\otimes\Delta_\ast)\circ\Delta_\ast+(\tau\otimes I)\circ(\Delta_\ast\otimes\alpha)\circ\Delta_\ast\nonumber\\
&&\quad -(\Delta_\ast\otimes\alpha)\circ\Delta_\ast+(\tau\otimes I)\circ(\gamma\otimes\alpha)\circ\Delta_\ast\nonumber\\
&&=(\alpha\otimes\Delta_1)\circ\Delta_1-(I\otimes\tau)\circ(\alpha\otimes\Delta_{-1})\circ\Delta_1-\epsilon\circ(\Delta_1\otimes\alpha)\circ\Delta_{-1}\nonumber\\
&&\quad+(I\otimes\tau)\circ\epsilon^2\circ(\Delta_{-1}\otimes\alpha)\circ\Delta_{-1}-(\tau\otimes I)\circ(\alpha\otimes\Delta_1)\circ\Delta_1\nonumber\\
&&\quad-\epsilon\circ(\alpha\otimes\Delta_{-1})\circ\Delta_1-(I\otimes\tau)\circ(\Delta_1\otimes\alpha)\circ\Delta_{-1}+\epsilon^2\circ(\Delta_{-1}\otimes\alpha)\circ\Delta_\dashv\nonumber\\
&&\quad(\tau\otimes I)\circ(\Delta_1\otimes\alpha)\circ\Delta_1-(\Delta_{-1}\otimes\otimes\alpha)\circ\Delta_1+\epsilon^2\circ(\alpha\otimes\Delta_{-1})\circ\Delta_{-1}\nonumber\\
&&\quad-(I\otimes\tau)\circ\epsilon^2\circ(\alpha_\otimes\Delta_1)\circ\Delta_{-1}-(\Delta_1\otimes\alpha)\circ\Delta_1+(\tau\otimes I)\circ(\Delta_{-1}\otimes\alpha)\circ\Delta_1\nonumber\\
&&\quad + \epsilon^2\circ(\alpha\otimes\Delta_1\circ\Delta_{-1}-(I\otimes\tau)\circ\epsilon^2\circ(\alpha\otimes\Delta_{-1})\circ\Delta_{-1}-(\Delta_0\otimes\alpha)\circ\Delta_1\nonumber\\
&&\quad+(\tau\otimes I)\circ(\Delta_0\otimes\alpha)\circ\Delta_1-(I\otimes\tau)\circ\epsilon^2\circ(\alpha\otimes\Delta_0)\circ\Delta_{-1}+\epsilon^2\circ(\alpha\otimes\Delta_0)\circ\Delta_{-1}.
 \end{eqnarray}
Which vanishes by axioms in Definition \ref{tdd}.
\end{proof}
\begin{prop}
 Let $(A, \Delta_0, \alpha, R)$ be a Hom-coassociative Rota-Baxter coalgebra. Define
\begin{eqnarray}
\Delta_{-1} &:=&(I\otimes B)\circ\Delta_0 -\Delta_0,\nonumber\\
\Delta_1 &:=&(B\otimes I)\circ\Delta_0 +\Delta_0\nonumber.
\end{eqnarray}
 Then,
$(A, \Delta_{-1}, \Delta_1, \alpha)$ is a Hom-dendriform coalgebra.
\end{prop}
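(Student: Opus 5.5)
We read $B$ as the Rota-Baxter operator $R$ of $(A,\Delta_0,\alpha,R)$, with $\lambda$ its weight. The aim is to check directly the three axioms of a Hom-dendriform coalgebra, namely (\ref{c1}), (\ref{c2}), (\ref{c3}) of Definition \ref{htca} with $\Delta_0$ replaced by $0$:
\begin{eqnarray*}
(\Delta_{-1}\otimes\alpha)\circ\Delta_{-1}&=&(\alpha\otimes(\Delta_{-1}+\Delta_1))\circ\Delta_{-1},\\
(\Delta_1\otimes\alpha)\circ\Delta_{-1}&=&(\alpha\otimes\Delta_{-1})\circ\Delta_1,\\
(\alpha\otimes\Delta_1)\circ\Delta_1&=&((\Delta_{-1}+\Delta_1)\otimes\alpha)\circ\Delta_1 .
\end{eqnarray*}
The first step is to record that $\Delta_{-1}+\Delta_1=(I\otimes R)\circ\Delta_0+(R\otimes I)\circ\Delta_0$. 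By the Rota-Baxter identity, composed with $R$ this gives
$$(\Delta_{-1}+\Delta_1)\circ R=(R\otimes R)\circ\Delta_0-\lambda\Delta_0\circ R.$$
This identity is the engine for the two "outer" axioms.

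For each axiom I will expand both sides into the four kinds of terms that arise from $\pm\Delta_0$ and $(R\otimes I)\Delta_0$, $(I\otimes R)\Delta_0$. Terms of the form $(X\otimes\alpha)\circ\Delta_0$ versus $(\alpha\otimes X)\circ\Delta_0$ are then moved past each other using Hom-coassociativity $(\Delta_0\otimes\alpha)\Delta_0=(\alpha\otimes\Delta_0)\Delta_0$ and $R\alpha=\alpha R$, exactly as in the proof of the earlier theorem giving a Hom-tridendriform coalgebra from a Hom-coassociative Rota-Baxter coalgebra.

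The middle axiom (\ref{c2}) is handled first, since it involves no Rota-Baxter identity. Both sides become $(R\otimes I\otimes R)(\Delta_0\otimes\alpha)\Delta_0$ plus the correction terms $\pm(R\otimes I\otimes I)(\ldots)$, $\pm(I\otimes I\otimes R)(\ldots)$ and $-(\Delta_0\otimes\alpha)\Delta_0$. These match termwise after using coassociativity, because the $+\Delta_0$ in $\Delta_1$ and the $-\Delta_0$ in $\Delta_{-1}$ appear symmetrically on both sides.

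For (\ref{c1}), the term $(\alpha\otimes(\Delta_{-1}+\Delta_1))\circ(I\otimes R)\Delta_0$ is rewritten with the displayed identity. This turns it into $(\alpha\otimes R\otimes R)(\alpha\otimes\Delta_0)\Delta_0-\lambda(\alpha\otimes\Delta_0 R)\Delta_0$, which is compared with the expansion of the left side via coassociativity. Axiom (\ref{c3}) is handled symmetrically, using the rewriting on the left factor.

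I expect the main obstacle to be the bookkeeping of the $\pm\Delta_0$ correction terms against the weight term $-\lambda\Delta_0R$. Because $\Delta_{-1}+\Delta_1$ contains no bare $\Delta_0$, the leftover terms cancel only for a specific weight. The first thing I would do is compute the leftover in (\ref{c1}), which should be a multiple of $(\alpha\otimes\Delta_0)\circ(I\otimes R)\Delta_0$, and read off the value of $\lambda$ making it vanish. I expect $\lambda=0$, possibly with a normalization of $B$ relative to $R$ such as $B=R-\lambda I$. If instead a bare multiple of $\Delta_0$ survives, I would treat the tridendriform structure $(\Delta'_{-1}=(I\otimes R)\Delta_0,\ \lambda\Delta_0,\ \Delta'_1=(R\otimes I)\Delta_0)$ from the earlier theorem as the source. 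I would then show that in any Hom-tridendriform coalgebra the two regroupings $(\Delta'_{-1}+\Delta'_0,\Delta'_1)$ and $(\Delta'_{-1},\Delta'_1+\Delta'_0)$ are both Hom-dendriform; this follows from adding (\ref{c4})--(\ref{c7}) to (\ref{c1})--(\ref{c3}). Finally I would match the given $\Delta_{\pm1}$ to one of these regroupings.
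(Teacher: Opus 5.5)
Your check of (\ref{c2}) is fine, but the plan for (\ref{c1}) and (\ref{c3}) cannot succeed: with $B=R$ a Rota--Baxter operator of weight $\lambda$, the statement is false.

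Carry out exactly the reduction you describe, which is also the first step of the paper's proof. You get
\[
(\Delta_{-1}\otimes\alpha)\circ\Delta_{-1}=\Big(\alpha\otimes\big[(R\otimes R)\circ\Delta_0-(R\otimes I)\circ\Delta_0-(I\otimes R)\circ\Delta_0+\Delta_0\big]\Big)\circ\Delta_0 .
\]
On the other side, since $\Delta_{-1}=(I\otimes(R-I))\circ\Delta_0$,
\[
(\alpha\otimes(\Delta_{-1}+\Delta_1))\circ\Delta_{-1}=\Big(\alpha\otimes\big[(R\otimes I)\circ\Delta_0+(I\otimes R)\circ\Delta_0\big]\circ(R-I)\Big)\circ\Delta_0 .
\]
By the Rota--Baxter identity the difference is $\big(\alpha\otimes\Delta_0\circ(\lambda R+I)\big)\circ\Delta_0$. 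Symmetrically, the defect of (\ref{c3}) is $\big(\Delta_0\circ(\lambda R+I)\otimes\alpha\big)\circ\Delta_0$.

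So your expectation is wrong on both counts. The leftover is not a multiple of $(\alpha\otimes\Delta_0)\circ(I\otimes R)\circ\Delta_0$, and at $\lambda=0$ the bare term $(\alpha\otimes\Delta_0)\circ\Delta_0$ survives. In fact no weight works. $R=0$ is a Rota--Baxter operator of every weight and gives $\Delta_{-1}+\Delta_1=0$. Then (\ref{c1}) would force $(\Delta_0\otimes\alpha)\circ\Delta_0=0$, which already fails for $\Delta_0(x)=x\otimes x$ on a one-dimensional space with $\alpha=I$.

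What the argument really needs is that $B$ commutes with $\alpha$ and satisfies the \emph{modified} Rota--Baxter identity
\[
(B\otimes B)\circ\Delta_0+\Delta_0=\big((B\otimes I)\circ\Delta_0+(I\otimes B)\circ\Delta_0\big)\circ B .
\]
This kills exactly that bare term. It is also the only way to absorb the ``$+1$'' in the paper's intermediate expression $\big((I\otimes B\otimes B)-(I\otimes B\otimes I)-(I\otimes I\otimes B)+1\big)\circ(\alpha\otimes\Delta_0)\circ\Delta_0$. So the paper's own computation only goes through under this identity as well. Equivalently, $B=R-I$ with $R$ a Rota--Baxter operator of weight $-2$, or $B=R+I$ with $R$ of weight $2$.

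Your fallback is sound in itself: both regroupings of a Hom-tridendriform coalgebra are Hom-dendriform, by adding (\ref{c4})--(\ref{c7}) to (\ref{c1})--(\ref{c3}). But matching $\big((I\otimes R)\circ\Delta_0+\lambda\Delta_0,\ (R\otimes I)\circ\Delta_0\big)$ with the given pair forces precisely $B=R-I$ and $\lambda=-2$. The other regrouping forces $B=R+I$ and $\lambda=2$. Neither matches $B=R$, nor your guess $B=R-\lambda I$.

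The missing step is therefore to replace the hypothesis by the modified identity, or by this normalization. With that change, your direct termwise verification goes through.
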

\begin{proof}
 We have :
\begin{eqnarray}
 (\Delta_{-1}\otimes\alpha)\circ\Delta_{-1}
&=&(I\otimes B\otimes I)\circ(\Delta_0\otimes\alpha)\circ\Delta_{-1}-(\Delta_0\otimes\alpha)\circ\Delta_{-1}\nonumber\\
&=&(I\otimes B\otimes\alpha)\circ(\Delta_0\otimes B)\circ\Delta_0-(I\otimes B\otimes I)\circ(\Delta_0\otimes\alpha)\circ\Delta_0\nonumber\\
&&-(I\otimes I\otimes\alpha)\circ(\Delta_0\otimes B)\circ\Delta_0+(\Delta_0\otimes\alpha)\circ\Delta_0\nonumber.
\end{eqnarray}
Using the Hom-associativity and the fact that $\alpha$ commutes with $B$,
\begin{eqnarray}
 (\Delta_{-1}\otimes\alpha)\circ\Delta_{-1}
&=&(I\otimes B\otimes B)\circ(\alpha\otimes\Delta_0)\circ\Delta_0-(I\otimes B\otimes I)\circ(\alpha\otimes\Delta_0)\circ\Delta_0\nonumber\\
&&-(I\otimes I\otimes B)\circ(\alpha\otimes\Delta_0)\circ\Delta_0+(\alpha\otimes\Delta_0)\circ\Delta_0\nonumber\\
&=&\Big( (I\otimes B\otimes B)-(I\otimes B\otimes I)-(I\otimes I\otimes B)+ 1 \Big)\circ(\alpha\otimes\Delta_0)\circ\Delta_0\nonumber.
\end{eqnarray}
$B$ being a Rota-Baxter operator and adding $(1-\tau)\circ\Delta_0\circ(1+B)$,
\begin{eqnarray}
(\Delta_{-1}\otimes\alpha)\circ\Delta_{-1}&=&\Big[\Big(I\otimes(1-\tau)\circ\Delta\circ(1+B)-(I\otimes B\otimes I)-(I\otimes I\otimes B)\Big)\circ(\alpha\otimes\Delta_0)\nonumber\\
&&\quad(I\otimes \Delta_0)\circ(\alpha\otimes B)-(I\otimes(\tau\circ\Delta_0))\circ(\alpha\otimes B)\Big]\circ\Delta_0\nonumber\\
&=&\Big(I\otimes((I\otimes B)\circ\Delta_0)\circ(\alpha\otimes B)-(I\otimes I\otimes B)\circ(\alpha\otimes\Delta_0)\nonumber\\
&&+(I\otimes(B\otimes I)\circ\Delta_0)\circ(\alpha\otimes B)-(I\otimes B\otimes I)\circ(\alpha\otimes\Delta_0)\Big)\circ\Delta_0\nonumber\\
&=&(I\otimes I\otimes B+I\otimes B\otimes I)\circ(\alpha\otimes\Delta_0)\circ\Delta_{-1}\nonumber\\
&=&(\alpha\otimes\Delta_{-1})\circ\Delta_{-1}+(\alpha\otimes\Delta_{-1})\circ\Delta_{-1}.\nonumber
\end{eqnarray}
The others relations are proved similarly.
\end{proof}
In order to study the relationship between post-Hom-Lie coalgebra and cocommutative Hom-tridendriform coalgebra, we have the following definition.
\begin{defn}\label{hppad}
 A post-Hom-Poisson coalgebra is a vector space $P$ equipped with four linear maps
$\gamma : P\rightarrow P\otimes P$, $\Delta_\cdot : P\rightarrow P\otimes P$, $\Delta_\star :  P\rightarrow P\otimes P$ and $\Delta_\ast :  P\rightarrow P\otimes P$,
 and a linear map $\alpha : P\rightarrow P$ such that $(P, \gamma, \Delta_\cdot, \alpha)$ is a post-Hom-Lie coalgebra, $(P, \Delta_\star, \Delta_\ast, \alpha)$
is a cocommutative Hom-tridendriform coalgebra, and they are compatible in the sense that, the following equations are satisfied:
\begin{eqnarray}
 (\alpha\otimes\Delta_\ast)\circ\gamma=(\gamma\otimes\alpha)\circ\Delta_\ast +(\tau\otimes I)\circ(\alpha\otimes\gamma)\circ\Delta_\ast,\label{p1}
\end{eqnarray}
\begin{eqnarray}
 (I\otimes\tau)\circ(\alpha\otimes\Delta_\star)\circ\gamma=\epsilon^2\circ(\alpha\otimes\gamma)\circ\Delta_\star-\epsilon\circ(\alpha\otimes\Delta_\cdot)\circ\Delta_\ast,\label{p2}
\end{eqnarray}
\begin{eqnarray}
 (\alpha\otimes\Delta_\ast)\circ\Delta_\cdot=(\Delta_\cdot\otimes\alpha)\circ\Delta_\ast+(\tau\otimes I)\circ(\alpha\otimes\Delta_\cdot)\circ\Delta_\ast,\label{p3}
\end{eqnarray}
\begin{eqnarray}
&&\epsilon\circ(\Delta_\star\otimes\alpha)\circ\Delta_\cdot + \epsilon^2\circ(I\otimes\tau)\circ(\Delta_\star\otimes\alpha)\circ\Delta_\cdot+\epsilon\circ(\Delta_\ast\otimes\alpha)\circ\Delta_\cdot
\cr
&&=(\Delta_\cdot\otimes\alpha)\circ\Delta_\ast+(\tau\otimes I)\circ(\alpha\otimes\Delta_\cdot)\circ\Delta_\ast,\label{p4}
\end{eqnarray}
\begin{eqnarray}
&&(I\otimes\tau)\circ(\alpha\otimes\Delta_\star)\circ\Delta_\cdot=\epsilon^2\circ(\alpha\otimes\Delta_\cdot)\circ\Delta_\star+(I\otimes\tau)\circ(\Delta\otimes\alpha)\circ\Delta_\star\cr
&&-\epsilon^2\circ(\Delta_\cdot\otimes\alpha)\circ\Delta_\star+(I\otimes\tau)\circ(\gamma\otimes\alpha)\circ\Delta_\star.\label{p5}
\end{eqnarray}
\end{defn}
\begin{exam}
 A post-Poisson coalgebra is a post-Hom-Poisson coalgebra with $\alpha=Id$.
\end{exam}

\begin{rem}
Observe that $(P, \gamma, \ast, \alpha)$ is a cocommutative Hom-Poisson coalgebra \cite{DY1} i.e.
$(P, \gamma, \alpha)$ is a Hom-Lie coalgebra, $(P, \Delta_\ast, \alpha)$ is a cocommutative Hom-coassociative coalgebra and,
$\gamma$ and $\Delta_\ast$ are compatible i.e. (\ref{p1}) holds.
\end{rem}
\begin{thm}
 Let $(P, \gamma, \Delta_\cdot, \Delta_\star, \Delta_\ast, \alpha)$ be a post-Hom-Poisson coalgebra. Define two new linear operations
\begin{eqnarray}
\Delta_A(x)=(1-\tau)\circ\Delta_\cdot(x) +\gamma(x), \quad \Delta_R(x)=(1+\tau)\circ\Delta_\star(x)+\Delta_\ast(x), \nonumber
\end{eqnarray}
for any $x\in P$. Then $(P, \Delta_A, \Delta_R, \alpha)$ is a cocommutative Hom-Poisson coalgebra.
\end{thm}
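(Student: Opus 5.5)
The plan is to verify the three ingredients of a cocommutative Hom-Poisson coalgebra (in the sense of the Remark preceding the theorem) separately. The roles are as in that Remark: $(P,\Delta_A,\alpha)$ is the Hom-Lie coalgebra and $(P,\Delta_R,\alpha)$ the cocommutative Hom-coassociative coalgebra, tied together by the analogue of (\ref{p1}).

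\emph{Hom-Lie structure on $\Delta_A$.} Skew-cocommutativity is immediate, since $\tau\circ(1-\tau)=-(1-\tau)$ and $\tau\circ\gamma=-\gamma$. For the Hom-co-Jacobi identity I would not expand from scratch. The post-Hom-Lie axioms (\ref{dplc2})--(\ref{dplc4}) for $(P,\gamma,\Delta_\cdot,\alpha)$ are exactly the dual of the statement that $[x,y]+x\cdot y-y\cdot x$ is a Hom-Lie bracket. So I would expand $(1+\xi+\xi^2)\circ(\alpha\otimes\Delta_A)\circ\Delta_A$ into three groups of terms:
\begin{itemize}
\item the $\gamma\gamma$ terms, which vanish by (\ref{dplc2});
\item the mixed $\gamma\Delta_\cdot$ terms, which, after using (\ref{re1}), (\ref{re2}) and the Remark preceding the morphism definition to move $\tau$'s around, are cyclic rotations of (\ref{dplc3});
\item the $\Delta_\cdot\Delta_\cdot$ terms, which after cyclic summation combine into $(1+\xi+\xi^2)$ applied to the left side of (\ref{dplc4}), with the $(\gamma\otimes\alpha)\circ\Delta_\cdot$ term cancelling part of the mixed group.
\end{itemize}
This is essentially the computation behind the earlier proposition on admissibility with $\tilde\Delta=\Delta+\frac12\gamma$, run with $\Delta_A=(1-\tau)\circ(\Delta_\cdot+\frac12\gamma)$. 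I would invoke that computation directly.

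\emph{Comultiplicativity of $\Delta_A$ and $\Delta_R$.} These follow from those of $\gamma,\Delta_\cdot,\Delta_\star,\Delta_\ast$, which I will assume are part of the (multiplicative) hypotheses.

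\emph{Cocommutative Hom-coassociative structure on $\Delta_R$.} Cocommutativity is clear because $\tau(1+\tau)=1+\tau$ and $\Delta_\ast$ is cocommutative. For coassociativity, I would view $\Delta_R$ as the dual of $x\ast y+y\ast x+x\cdot y$ in a commutative tridendriform algebra. I would set $\Delta_{-1}=\tau\circ\Delta_\star$, $\Delta_1=\Delta_\star$, $\Delta_0=\Delta_\ast$ and check that (\ref{d3}), (\ref{d4}) together with cocommutativity of $\Delta_\ast$ yield the seven axioms (\ref{c1})--(\ref{c7}). Then the earlier theorem that $\Delta_{-1}+\Delta_0+\Delta_1$ is Hom-coassociative gives the result. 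The translation uses $\tau$-conjugates of (\ref{d3}) and (\ref{d4}); for instance, (\ref{c2}) becomes a $\Phi_{(321)}$-conjugate of (\ref{d3}) combined with itself.

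\emph{Compatibility.} It remains to show $(\alpha\otimes\Delta_R)\circ\Delta_A=(\Delta_A\otimes\alpha)\circ\Delta_R+(\tau\otimes I)\circ(\alpha\otimes\Delta_A)\circ\Delta_R$. Expanding bilinearly yields terms of the types $(\gamma,\Delta_\ast)$, $(\gamma,\Delta_\star)$, $(\Delta_\cdot,\Delta_\ast)$ and $(\Delta_\cdot,\Delta_\star)$, each also with its $\tau$-twisted versions. I would sort them by type and match:
\begin{itemize}
\item the $(\gamma,\Delta_\ast)$ terms with (\ref{p1});
\item the $(\gamma,\Delta_\star)$ terms with (\ref{p2}) and its $\xi$-conjugate;
\item the $(\Delta_\cdot,\Delta_\ast)$ terms with (\ref{p3});
\item the $(\Delta_\cdot,\Delta_\star)$ terms with (\ref{p4}) and (\ref{p5}).
\end{itemize}
The leftover $(\gamma\otimes\alpha)\circ\Delta_\star$ piece of (\ref{p5}) should cancel against the $\gamma$-part of $\Delta_A$.

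This last step is the main obstacle. The bookkeeping of permutations ($\epsilon,\epsilon^2$, meaning $\xi,\xi^2$, and $\tau\otimes I$, $I\otimes\tau$) is heavy, and the axioms (\ref{p2}), (\ref{p4}) and (\ref{p5}) as stated may need to be applied in permuted forms. I would handle it by working in Sweedler notation on a single element $x$, normalizing every term so that its tensor legs appear in one fixed ordering, and applying each $\Phi_\sigma$ of the relevant axiom to match.
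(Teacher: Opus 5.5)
Your treatment of the Hom-co-Jacobi identity for $\Delta_A$ and of the compatibility condition matches the paper. The paper likewise just expands $(\alpha\otimes\Delta_A)\circ\Delta_A$ and $(\alpha\otimes\Delta_R)\circ\Delta_A-(\Delta_A\otimes\alpha)\circ\Delta_R-(\tau\otimes I)\circ(\alpha\otimes\Delta_A)\circ\Delta_R$, then appeals to (\ref{dplc2})--(\ref{dplc4}) and (\ref{p1})--(\ref{p5}). Your remark that $\Delta_A=(1-\tau)\circ(\Delta_\cdot+\frac12\gamma)$, so the earlier admissibility proposition applies verbatim, is a tidier way to do the first of these.

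The genuinely different step is coassociativity of $\Delta_R$. The paper expands $(\Delta_R\otimes\alpha)\circ\Delta_R$ and rearranges it into $(\alpha\otimes\Delta_R)\circ\Delta_R$ using (\ref{d3}) and (\ref{d4}). You instead check that $\Delta_{-1}=\tau\circ\Delta_\star$, $\Delta_0=\Delta_\ast$, $\Delta_1=\Delta_\star$ satisfy (\ref{c1})--(\ref{c7}), then quote the theorem that $\Delta_{-1}+\Delta_0+\Delta_1$ is Hom-coassociative. This reduction is sound:
\begin{itemize}
\item (\ref{c3}) and (\ref{c5}) are (\ref{d3}) and (\ref{d4}) themselves.
\item Since $\Delta_\star+\tau\circ\Delta_\star+\Delta_\ast$ is cocommutative, (\ref{d3}) forces $(\alpha\otimes\Delta_\star)\circ\Delta_\star$ to be $(\tau\otimes I)$-invariant. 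Leg permutations of (\ref{d3}) and of this invariance give (\ref{c1}) and (\ref{c2}).
\item Cocommutativity of $\Delta_\ast$ makes both sides of (\ref{d4}) symmetric in their last two legs. This turns permuted copies of (\ref{d4}) into (\ref{c4}) and (\ref{c6}).
\item (\ref{c7}) is Hom-coassociativity of $\Delta_\ast$.
\end{itemize}
Your route is more modular and shows exactly where cocommutativity enters. The paper's is self-contained, but its rearrangement is left unchecked.

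Your bookkeeping in the compatibility step needs some corrections.
\begin{itemize}
\item The terms do not separate by type as you list them. (\ref{p2}) contains the term $\epsilon\circ(\alpha\otimes\Delta_\cdot)\circ\Delta_\ast$, and (\ref{p4}) contains $\epsilon\circ(\Delta_\ast\otimes\alpha)\circ\Delta_\cdot$. So (\ref{p3}) only absorbs $(\alpha\otimes\Delta_\ast)\circ\Delta_\cdot$. The piece $-(\alpha\otimes\Delta_\ast)\circ\tau\circ\Delta_\cdot$ goes with (\ref{p4}), and the remaining $(\Delta_\cdot,\Delta_\ast)$-terms are produced by (\ref{p2}).
\item As in the paper's count, (\ref{p2}) and (\ref{p5}) are each used twice, once for each summand $\Delta_\star$ and $\tau\circ\Delta_\star$ of $\Delta_R$. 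The two instances therefore differ by $I\otimes\tau$, not by $\xi$.
\item More seriously, (\ref{p4}) as printed has literally the right-hand side of (\ref{p3}), so it contains no $\Delta_\star$-outer term. No leg permutation of it can cancel $(\tau\otimes I)\circ(\alpha\otimes\tau\circ\Delta_\cdot)\circ\Delta_\star$ and $(\tau\circ\Delta_\cdot\otimes\alpha)\circ\tau\circ\Delta_\star$, which come from $-\tau\circ\Delta_\cdot$ inside $\Delta_A$. You need (\ref{p4}), suitably permuted, in the form $((\Delta_\star+\tau\circ\Delta_\star+\Delta_\ast)\otimes\alpha)\circ\Delta_\cdot=(1+\tau\otimes I)\circ(\alpha\otimes\Delta_\cdot)\circ\Delta_\star$. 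Dually, with $\triangleright,\star,\ast$ the products dual to $\Delta_\cdot,\Delta_\star,\Delta_\ast$, this reads $(y\star z+z\star y+y\ast z)\triangleright\alpha(x)=\alpha(y)\star(z\triangleright x)+\alpha(z)\star(y\triangleright x)$. With that form the computation closes; the paper's one-line argument silently has the same defect.
\end{itemize}

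Finally, you are right that comultiplicativity of $\Delta_A$ does not follow from the hypotheses, since only $\gamma$ is assumed comultiplicative, and the paper ignores this. Your extra assumption is needed for $\Delta_A$. It is not needed for $\Delta_R$, because Hom-coassociative coalgebras in this paper need not be comultiplicative.
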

\begin{proof}
\begin{enumerate}
 \item Let us first
prove that the coproduct $\Delta_A$ is a Hom-Lie cobracket :
 It is clear that it is skew-cocommutative and we have:
\begin{eqnarray}
 (\alpha\otimes\Delta_A)\circ\Delta_A&=&(\alpha\otimes\Delta_\cdot)\circ\Delta_A-(I\otimes\tau)\circ(\alpha\otimes\Delta_\cdot)\circ\Delta_A+(\alpha\otimes\gamma)\circ\Delta_A\nonumber\\
&=&(\alpha\otimes\Delta_\cdot)\circ\Delta_\cdot-(I\otimes\tau)\circ(\alpha\otimes\Delta_\cdot)\circ\Delta_\cdot+(\alpha\otimes\gamma)\circ\Delta_\cdot\nonumber\\
&&-\epsilon\circ(\Delta_\cdot\otimes\alpha)\circ\Delta_\cdot+\epsilon^2\circ(I\otimes\tau)\circ(\Delta_\cdot\otimes\alpha)\circ\Delta_\cdot-\epsilon\circ(\gamma\otimes\alpha)\circ\Delta_\cdot\nonumber\\
&&(\alpha\otimes\Delta_\cdot)\circ\gamma-(I\otimes\tau)\circ(\alpha\otimes\Delta_\cdot)\circ\gamma+(\alpha\otimes\gamma)\circ\gamma.
\end{eqnarray}
From Definition \ref{hplad}, it follows that $(1+\xi+\xi^2)\circ(\alpha\otimes\Delta_A)\circ\Delta_A=0$.

\item Next prove that the product $\Delta_R$ is cocommutative and coassociative :
The cocommutativity of the product $\Delta_R$ is trivial and we have:
 \begin{eqnarray}
(\Delta_R\otimes\alpha)\circ\Delta_R
&=&\Big(((1+\tau)\circ\Delta_\star+\Delta_\ast)\otimes\alpha\Big)\circ\Delta_R\nonumber\\
&=&((\Delta_\star+\tau\circ\Delta_\star+\Delta_\ast)\otimes\alpha)\circ\Delta_\star+(\alpha\otimes(\Delta_\star+\tau\circ\Delta_\star+\Delta_\ast))\circ\Delta_\star\nonumber\\
&&+ ((\Delta_\star+\tau\circ\Delta_\star+\Delta_\ast)\otimes\alpha)\circ\Delta_\ast\nonumber\\
&=&((\Delta_\star+\tau\circ\Delta_\star+\Delta_\ast)\otimes\alpha)\circ\Delta_\star+\epsilon^2\circ(\alpha\otimes\Delta_\star)\circ\Delta_\star+\epsilon^2\circ(\alpha\otimes\Delta_\ast)\circ\Delta_\star\nonumber\\
&&+\epsilon^2\circ(I\otimes\tau)\circ(\alpha\otimes\Delta_\star)\circ\Delta_\star+(1+\tau\otimes I)\circ(\Delta_\star\otimes\alpha)\circ\Delta_\ast+(\Delta_\ast\otimes\alpha)\circ\Delta_\ast\nonumber.
 \end{eqnarray}
Using axioms in Definition \ref{chd}, one has :
\begin{eqnarray}
 (\Delta_R\otimes\alpha)\circ\Delta_R
&=&(1+(I\otimes\tau))\circ(\alpha\otimes\Delta_\star)\circ\Delta_\star+(\epsilon^2\circ(I\otimes\tau)+\epsilon)\circ(\Delta_\star\otimes\alpha)\circ\Delta_\star\nonumber\\
&&+\epsilon^2\circ(I\otimes\tau)\circ(\Delta_\ast\otimes\alpha)\circ\Delta_\star+(I\otimes\tau)\circ(\alpha\otimes\Delta_\star)\circ\Delta_\ast+(\alpha\otimes\Delta_\ast)\circ\Delta_\star\nonumber\\
&&+(\alpha\otimes\Delta_\star)\circ\Delta_\ast+(\alpha\otimes\Delta_\ast)\circ\Delta_\ast\nonumber\\
&=&(1+I\otimes\tau)\circ(\alpha\otimes\Delta_\star)\circ\Delta_\star+(\alpha\otimes\Delta_\ast)\circ\Delta_\star+(\epsilon^2\circ(I\otimes\tau)+\epsilon)\circ(\Delta_\star\otimes\alpha)\circ\Delta_\star\nonumber\\
&&+\epsilon^2\circ(I\otimes\tau)\circ(\Delta_\ast\otimes\alpha)\circ\Delta_\star+(I\otimes\tau+1)\circ(\alpha\otimes\Delta_\star)\circ\Delta_\ast+(\alpha\otimes\Delta_\ast)\circ\Delta_\ast\nonumber\\
&=&(\alpha\otimes\Delta_R)\circ\Delta_R.\nonumber
\end{eqnarray}
\item
Finally, let us prove (\ref{p1}) for $\Delta_A$ and $\Delta_R$.
\begin{eqnarray}
 &&\quad(\alpha\otimes\Delta_R)\circ\Delta_A-(\Delta_A\otimes\alpha)\circ\Delta_R-(\tau\otimes I)\circ(\alpha\otimes\Delta_A)\circ\Delta_R\nonumber\\
&&=(1+I\otimes\tau)\circ(\alpha\otimes\Delta_\star)\circ\Delta_A+(\alpha\otimes\Delta_A)\circ\Delta_A+(\tau\otimes I-1)\circ(\Delta_\cdot\otimes\alpha)\circ\Delta_R\nonumber\\
&&\quad-(\gamma\otimes\alpha)\circ\Delta_R-(\tau\otimes I)\circ(\alpha\otimes\Delta_\cdot)\circ\Delta_R\nonumber\\
&&=(1+I\otimes\tau)\circ(\alpha\otimes\Delta_\star)\circ\Delta_\cdot+(\alpha\otimes\Delta_\ast)\circ\Delta_\cdot-(\epsilon+\epsilon^2\circ(I\otimes\tau))\circ(\Delta_\star\otimes\alpha)\circ\Delta_\cdot\nonumber\\
&&-\epsilon\circ(\Delta_\ast\otimes\alpha)\circ\Delta_\cdot+(1+I\otimes\tau)\circ(\alpha\otimes\Delta_\star)\circ\gamma+(\alpha\otimes\Delta_\ast)\circ\gamma\nonumber\\
&&+(\tau\otimes I-1)\circ(\Delta_\cdot\otimes\alpha)\circ\Delta_\star-(\gamma\otimes\alpha)\circ\Delta_\star\nonumber.
\end{eqnarray}
The left hand side vanishes by expanding and using once axioms (\ref{p1}), (\ref{p3}) and (\ref{p4}),
 twice  relations (\ref{p2}) and (\ref{p5}).
\end{enumerate}

Therefore $(P, \Delta_\ast, \Delta_A, \alpha)$ is a cocommutative Hom-Poisson coalgebra.
\end{proof}
\section{Post-Hom-Lie comodules}
 This last section is devoted to the  introduction of comodules over post-Hom-Lie coalgebras and the given of various constructions by twisting.
\begin{defn}
 A Hom-module is a pair $(M,\alpha_M)$ in which $M$ is a vector space and $\alpha_M: M\longrightarrow M$ is a linear map.
\end{defn}
\begin{defn}
Let $(L, \gamma, \Delta_\cdot, \alpha)$ be a post-Hom-Lie coalgebra. A comodule over $L$ is a Hom-module $(M, \alpha_M)$ equipped with two
linear maps $\Delta_\diamond :  M\rightarrow L\otimes M$ and $\Delta_\bullet : M\rightarrow L\otimes M$, called structure maps, such that :
 \begin{eqnarray}
  \Delta_\diamond\circ\alpha_M&=&(\alpha_M\otimes\alpha)\circ\Delta_\diamond\quad\mbox{and}\quad\Delta_\bullet\circ\alpha_M=(\alpha_M\otimes\alpha)\circ\Delta_\bullet,\label{ma1}\\
(\gamma\otimes\alpha_M)\circ\Delta_\diamond&=&(\alpha\otimes\Delta_\diamond)\circ\Delta_\diamond-(\tau\otimes Id_M)\circ(\alpha\otimes\Delta_\diamond)\circ\Delta_\diamond,\label{ma2}\\
(\Delta_\cdot\otimes\alpha_M)\circ\Delta_\diamond&=&(\alpha\otimes\Delta_\diamond)\circ\Delta_\bullet-(\tau\otimes Id_M)\circ(\alpha\otimes\Delta_\bullet)\circ\Delta_\bullet,\label{ma3},\\
(\gamma\otimes\alpha_M)\circ\Delta_\bullet&=&(1-(\tau\otimes Id_M))\circ(\alpha\otimes\Delta_\bullet)\circ\Delta_\bullet+((\tau\otimes Id_M)-1)\circ(\Delta_\cdot\otimes\alpha_M)\circ\Delta_\bullet\label{ma4}.
\end{eqnarray}

\end{defn}
\begin{exam}
 Any post-Hom-Lie coalgebra is a comodule over itself.
\end{exam}

\begin{prop}
 Let $(M_i, \Delta_{\diamond i}, \Delta_{\bullet i}, \alpha_{M_i})$ ($i=1, 2$) be two comodules over the post-Hom-Lie coalgebra $(L, \gamma, \Delta_\cdot, \alpha)$.
Then, the direct sum $M=M_1\oplus M_2$ is a comodule over $L$ for the structure maps
$$\Delta_\diamond=\Delta_{\diamond_1}\oplus\Delta_{\diamond_2}, \quad \Delta_\bullet=\Delta_{\bullet_1}\oplus\Delta_{\bullet_2},\quad\alpha_M=\alpha_{M_1}\oplus\alpha_{M_2}.$$
\end{prop}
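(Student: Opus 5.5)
The plan is to check the four axioms (\ref{ma1})--(\ref{ma4}) directly. Each one is linear in the structure maps, and both sides respect the decomposition $L\otimes M=(L\otimes M_1)\oplus(L\otimes M_2)$.

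First I make the structure maps precise. For $m=m_1+m_2$ with $m_i\in M_i$, set $\Delta_\diamond(m)=\Delta_{\diamond 1}(m_1)+\Delta_{\diamond 2}(m_2)$, and similarly for $\Delta_\bullet$ and $\alpha_M$. The sum is taken in $L\otimes M\cong (L\otimes M_1)\oplus(L\otimes M_2)$, and likewise $L\otimes L\otimes M\cong (L\otimes L\otimes M_1)\oplus(L\otimes L\otimes M_2)$. Let $\iota_i:M_i\to M$ be the inclusions. Then $\Delta_\diamond\circ\iota_i=(I\otimes\iota_i)\circ\Delta_{\diamond i}$, with the same relation for $\Delta_\bullet$, and $\alpha_M\circ\iota_i=\iota_i\circ\alpha_{M_i}$.

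Next I record how these intertwining relations pass through every operator that appears in the axioms. The operators are $\alpha\otimes\Delta_\diamond$, $\alpha\otimes\Delta_\bullet$, $\gamma\otimes\alpha_M$, $\Delta_\cdot\otimes\alpha_M$ and $\tau\otimes Id_M$. Each of them commutes with $I\otimes\iota_i$ or $I\otimes I\otimes\iota_i$, in the sense that
$$(\alpha\otimes\Delta_\diamond)\circ(I\otimes\iota_i)=(I\otimes I\otimes\iota_i)\circ(\alpha\otimes\Delta_{\diamond i}),\qquad (\tau\otimes Id_M)\circ(I\otimes I\otimes\iota_i)=(I\otimes I\otimes\iota_i)\circ(\tau\otimes Id_{M_i}),$$
and analogously for the others. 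Consequently, for each axiom, the map ``LHS $-$ RHS'' for $M$, precomposed with $\iota_i$, equals $(I\otimes I\otimes\iota_i)$ composed with ``LHS $-$ RHS'' for $M_i$. The latter vanishes because $M_i$ is a comodule. Since $M=\iota_1(M_1)+\iota_2(M_2)$, each axiom holds on all of $M$.

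For (\ref{ma1}) the same argument applies with two tensor factors instead of three. I use the paper's convention $(\alpha_M\otimes\alpha)$ as written, and the componentwise argument is identical whatever the order of the factors.

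There is no real obstacle here. The only point needing care is the bookkeeping of the canonical identification of $L\otimes L\otimes(M_1\oplus M_2)$ with the direct sum, and making sure $\tau\otimes Id_M$ acts only on the $L$-factors, so it preserves each summand. I would write out one axiom, say (\ref{ma4}), explicitly on an element $m_1+m_2$, and state that the others follow verbatim.
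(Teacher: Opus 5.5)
Your proposal is correct and is just a detailed version of the paper's proof, which only says the verification is ``straightforward by calculation''. Checking each axiom on $\iota_i(M_i)$ via the intertwining relations is exactly that calculation. One wording fix: the axioms are quadratic rather than linear in the structure maps. Your argument does not need linearity, though, because each structure map sends $M_i$ into $L\otimes M_i$, so the composites stay inside $L\otimes L\otimes M_i$ and no cross terms arise.
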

\begin{proof}
It is straightforward by calculation.
\end{proof}
\begin{prop}\label{nph}
  Let $(L, \gamma, \Delta_\cdot, \alpha)$ be a comultiplicative post-Hom-Lie coalgebra. Define two new operations $\Delta_\circ, \Delta_\ast : L\rightarrow L\otimes L$
by
$$\Delta_\circ:=(\alpha^k\otimes I)\circ\gamma\quad\mbox{and}\quad \Delta_\ast :=(\alpha^k\otimes I)\circ\Delta_\cdot.$$
Then, $(L, \Delta_\circ, \Delta_\ast, \alpha)$ is a comodule over the post-Hom-Lie coalgebra $(L, \gamma, \Delta_\cdot, \alpha)$.
\end{prop}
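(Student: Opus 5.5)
The plan is a direct verification of the four comodule axioms (\ref{ma1})--(\ref{ma4}) for $M=L$, $\alpha_M=\alpha$, $\Delta_\diamond=\Delta_\circ=(\alpha^k\otimes I)\circ\gamma$ and $\Delta_\bullet=\Delta_\ast=(\alpha^k\otimes I)\circ\Delta_\cdot$. It follows the model of the earlier twisting theorem for Hom-tridendriform comodules, where $\Delta_{i,M}^{n,0}=(\alpha^n\otimes Id_M)\circ\Delta_{i,M}$. The key tool is comultiplicativity: $\gamma\circ\alpha=(\alpha\otimes\alpha)\circ\gamma$ and $\Delta_\cdot\circ\alpha=(\alpha\otimes\alpha)\circ\Delta_\cdot$. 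It gives
\[
\gamma\circ\alpha^k=(\alpha^k\otimes\alpha^k)\circ\gamma,\qquad \Delta_\cdot\circ\alpha^k=(\alpha^k\otimes\alpha^k)\circ\Delta_\cdot ,
\]
which lets every power $\alpha^k$ be pushed to the outside of a composite. We also use that $\tau\otimes I$ commutes with $\alpha^k\otimes\alpha^k\otimes I$, since both first legs carry the same power.

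For (\ref{ma1}), we have $\Delta_\circ\circ\alpha=(\alpha^k\otimes I)(\alpha\otimes\alpha)\gamma=(\alpha\otimes\alpha)\circ\Delta_\circ$, and the same computation works for $\Delta_\ast$.

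For the remaining axioms, each side is a composite of two structure maps, and we rewrite it as $(\alpha^k\otimes\alpha^k\otimes I)$ applied to the corresponding untwisted composite. For the left-hand sides,
\[
(\gamma\otimes\alpha)\circ\Delta_\circ=(\gamma\circ\alpha^k\otimes\alpha)\circ\gamma=(\alpha^k\otimes\alpha^k\otimes I)\circ(\gamma\otimes\alpha)\circ\gamma .
\]
For the right-hand sides,
\[
(\alpha\otimes\Delta_\circ)\circ\Delta_\circ=(\alpha^{k+1}\otimes\alpha^k\otimes I)\circ(I\otimes\gamma)\circ\gamma=(\alpha^k\otimes\alpha^k\otimes I)\circ(\alpha\otimes\gamma)\circ\gamma .
\]
Axioms (\ref{ma3}) and (\ref{ma4}) are handled identically, with $\Delta_\cdot$ in place of $\gamma$ where appropriate. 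After factoring out the common operator $\alpha^k\otimes\alpha^k\otimes I$, everything reduces to the case $k=0$, i.e. to the statement that $L$ is a comodule over itself (the Example preceding this proposition).

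So the real content is the identities for $k=0$ in terms of the axioms (\ref{dplc1})--(\ref{dplc4}):

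\begin{itemize}
\item (\ref{ma2}) becomes $(\gamma\otimes\alpha)\gamma=(1-\tau\otimes I)(\alpha\otimes\gamma)\gamma$. This follows from the Hom-co-Jacobi identity (\ref{dplc2}), rewritten with (\ref{re1}) and (\ref{re2}) as in the proof of the proposition on $\tilde\Delta=\Delta+\gamma$.
\item (\ref{ma4}) is a rearrangement of (\ref{dplc4}): move $(\gamma\otimes\alpha)\circ\Delta_\cdot$ to one side and group the remaining four terms as $(1-\tau\otimes I)(\alpha\otimes\Delta_\cdot)\Delta_\cdot+(\tau\otimes I-1)(\Delta_\cdot\otimes\alpha)\Delta_\cdot$.
\item (\ref{ma3}) mixes $\Delta_\cdot$ and $\gamma$, so it must come from (\ref{dplc3}). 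Here we expect to use skew-cocommutativity of $\gamma$ and the remark $(\Delta\otimes\alpha)\tau\gamma=-(\Delta\otimes\alpha)\gamma$ to match the terms.
\end{itemize}

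The main obstacle is this last point. The axiom (\ref{ma3}), as literally written, has $\Delta_\diamond$ on the left but $\Delta_\bullet$ on the right. Matching it with (\ref{dplc3}) will likely require reading the roles of $\Delta_\diamond$ and $\Delta_\bullet$ suitably: with $\Delta_\bullet$ on the left and the mixed terms $(\alpha\otimes\Delta_\diamond)\Delta_\bullet$, $(\alpha\otimes\Delta_\bullet)\Delta_\diamond$ on the right. The approach is to first confirm, in Sweedler notation as in (\ref{dplc3a}) and (\ref{dplc3b}), that the adjoint comodule $L$ satisfies (\ref{ma3}) in the intended form. The $\alpha^k$-twisting argument above then transfers it verbatim, because the factoring step never depends on which structure map sits where.
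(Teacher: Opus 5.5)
Your reduction is correct and uses the same mechanism as the paper's proof. Every term of (\ref{ma1})--(\ref{ma4}) equals $(\alpha^k\otimes\alpha^k\otimes I)$ composed with the corresponding $k=0$ term, wherever $\Delta_\circ$ and $\Delta_\ast$ sit, so the problem reduces to $k=0$. Your $k=0$ checks are also right: (\ref{ma1}) directly, (\ref{ma2}) by Hom-co-Jacobi via (\ref{re1}) and (\ref{re2}), and (\ref{ma4}) as a rearrangement of (\ref{dplc4}). The gap is (\ref{ma3}). It is the only axiom that needs the mixed compatibility (\ref{dplc3}), and it is the one the paper actually writes out. You neither verify it nor identify its correct form, and the reading you propose is false.

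With $\Delta_\bullet$ on the left-hand side, the $k=0$ case would read $(\Delta_\cdot\otimes\alpha)\circ\Delta_\cdot=(\alpha\otimes\gamma)\circ\Delta_\cdot-(\tau\otimes I)\circ(\alpha\otimes\Delta_\cdot)\circ\gamma$. By (\ref{dplc3}) this would force $(\Delta_\cdot\otimes\alpha)\circ\Delta_\cdot=(\Delta_\cdot\otimes\alpha)\circ\gamma$. That fails for the one-dimensional post-Lie coalgebra $\Delta_\cdot(e)=e\otimes e$, $\gamma=0$, $\alpha=Id$, where the left side is $e\otimes e\otimes e$ and the right side is $0$. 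The literal (\ref{ma3}) also fails on this example, so a correction is needed, but not yours. The printed left-hand side is correct, since it matches $(\Delta\otimes\alpha)\circ\gamma$ in (\ref{dplc3}); only the inner coaction of the last term is misprinted:
\[
(\Delta_\cdot\otimes\alpha_M)\circ\Delta_\diamond=(\alpha\otimes\Delta_\diamond)\circ\Delta_\bullet-(\tau\otimes Id_M)\circ(\alpha\otimes\Delta_\bullet)\circ\Delta_\diamond .
\]
For $k=0$ this is (\ref{dplc3}) verbatim, so skew-cocommutativity and the remark on $(\Delta\otimes\alpha)\circ\tau\circ\gamma$ are not needed. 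Your factoring then gives
\[
(\Delta_\cdot\otimes\alpha)\circ\Delta_\circ=(\alpha\otimes\Delta_\circ)\circ\Delta_\ast-(\tau\otimes I)\circ(\alpha\otimes\Delta_\ast)\circ\Delta_\circ ,
\]
which is exactly the paper's displayed computation. Its last term $(\tau\otimes I)\circ(I\otimes\alpha^k\otimes I)\circ(\alpha\otimes\Delta_\cdot)\circ\Delta_\circ$ is $(\tau\otimes I)\circ(\alpha\otimes\Delta_\ast)\circ\Delta_\circ$.
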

\begin{proof}
 We have:
\begin{eqnarray}
 (\Delta_\cdot\otimes\alpha)\circ\Delta_\circ &=&(\Delta_\cdot\otimes I)\circ(\alpha^k\otimes\alpha)\circ\gamma\nonumber\\
&=&(I\otimes\alpha^k\otimes I)\circ(\alpha^{k+1}\otimes\gamma)\circ\Delta_\cdot-(\tau\otimes I)\circ(I\otimes\alpha^k\otimes I)\circ(\alpha^{k+1}\otimes\Delta)\circ\gamma\nonumber\\
&=&(I\otimes\alpha^k\otimes I)\circ(\alpha\otimes\gamma)\circ\Delta_\ast-(\tau\otimes I)\circ(I\otimes\alpha^k\otimes I)\circ(\alpha\otimes\Delta_\cdot)\circ\Delta_\circ\nonumber\\
&=&(\alpha\otimes\Delta_\circ)\circ\Delta_\ast-(\tau\otimes I)\circ(I\otimes\alpha^k\otimes I)\circ(\alpha\otimes\Delta_\cdot)\circ\Delta_\circ.\nonumber
\end{eqnarray}
The other axioms are proved analogously.
\end{proof}
\begin{thm}\label{mpm}
Let $(M_i, \Delta_{\diamond_i}, \Delta_{\bullet_i}, \alpha_{M_i})$ ($i=1, 2$) be two comodules over the multiplicative post-Hom-Lie coalgebra
 $(L, \gamma, \Delta_\cdot, \alpha)$.
The linear maps $\Delta_\diamond, \Delta_\bullet :  M_1\otimes M_2\rightarrow L\otimes M_1\otimes M_2$ and the linear map
 $\alpha_M : M_1\otimes M_2\rightarrow M_1\otimes M_2$ defined by
 \begin{eqnarray}
\alpha_M&:=&\alpha_{M_1}\otimes\alpha_{M_2}\nonumber\\
  \Delta_\diamond &:=&(\alpha^k\otimes Id_{M_1})\circ\Delta_{\diamond_1} \otimes\alpha_{M_2}+    \alpha_{M_1}\otimes(\alpha^k\otimes Id_{M_2})\circ\Delta_{\diamond_2} \nonumber\\
   \Delta_\bullet &:=&(\alpha^k\otimes Id_{M_1})\circ\Delta_{\bullet_1} \otimes\alpha_{M_2}+    \alpha_{M_1}\otimes(\alpha^k\otimes Id_{M_2})\circ\Delta_{\bullet_2} \nonumber.
 \end{eqnarray}
give to $M_1\otimes M_2$ an $L$-comodule structure.
\end{thm}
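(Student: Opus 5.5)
The plan is a direct, axiom-by-axiom verification of (\ref{ma1})--(\ref{ma4}) for the structure maps on $M=M_1\otimes M_2$. Throughout I read the comodule maps as landing in $L\otimes M$. This means $\Delta_\diamond(m_1\otimes m_2)$ is the sum of two pieces. The first is the term coming from $M_1$, whose $L$-leg is $\alpha^k(m_{1[-1]})$, with $m_{1[0]}\otimes\alpha_{M_2}(m_2)$ in the $M$-slot. The second is the analogous term coming from $M_2$, with the $L$-leg moved to the front by a flip. I will write the coactions in Sweedler-type notation, $\Delta_{\diamond_i}(m)=m_{[-1]}\otimes m_{[0]}$, and similarly for $\Delta_{\bullet_i}$. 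First I isolate the building block. By the same computation as in Proposition \ref{nph}, and as in the Hom-tridendriform case (the twisting theorem with $\Delta^{n,0}_{i,M}$), each twisted pair $\big((\alpha^k\otimes Id)\circ\Delta_{\diamond_i},(\alpha^k\otimes Id)\circ\Delta_{\bullet_i}\big)$ is again an $L$-comodule structure on $M_i$. This uses multiplicativity of $\alpha$ with respect to $\gamma$ and $\Delta_\cdot$, which lets $\alpha^k$ be pulled through both coproducts. So it suffices to prove the statement for $k=0$ with the two comodules replaced by their twists.

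Next comes (\ref{ma1}). Apply $\Delta_\diamond$ to $\alpha_{M_1}(m_1)\otimes\alpha_{M_2}(m_2)$ and use (\ref{ma1}) for each $M_i$. Since $\alpha_M=\alpha_{M_1}\otimes\alpha_{M_2}$ acts factorwise, each summand commutes with $\alpha_M$ separately, and the same holds for $\Delta_\bullet$.

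For (\ref{ma2})--(\ref{ma4}) I expand both sides on $m_1\otimes m_2$. Each side is a composition of two coactions, so it splits into four types of terms: both coactions hit $M_1$, both hit $M_2$, or one hits each (in two orders). The pure $M_1$ terms reproduce (\ref{ma2}) (resp. (\ref{ma3}), (\ref{ma4})) for $M_1$ tensored with $\alpha_{M_2}^2(m_2)$, and similarly for $M_2$. These vanish by the hypotheses on $M_i$. Here I must check that the left-hand sides $(\gamma\otimes\alpha_M)\circ\Delta_\diamond$, etc., only produce pure terms. They do, since $\gamma$ and $\Delta_\cdot$ act on the single $L$-leg.

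The main obstacle is the mixed terms on the right-hand sides. Take $(\alpha\otimes\Delta_\diamond)\circ\Delta_\diamond$ in (\ref{ma2}): it contains $\alpha(m_{1[-1]})\otimes m_{2[-1]}\otimes\ldots$ and $\alpha(m_{2[-1]})\otimes m_{1[-1]}\otimes\ldots$, with the $M$-parts $\alpha$-twisted differently. The antisymmetrizer $1-(\tau\otimes Id_M)$ should cancel these pairwise. For that cancellation the two orders must carry identical $\alpha$-powers on each leg, and this is exactly where (\ref{ma1}) is needed: it moves $\alpha_{M_i}$ past $\Delta_{\diamond_i}$ at the cost of an $\alpha$ on the $L$-leg. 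I expect the cancellation to work cleanly for (\ref{ma2}) and (\ref{ma4}), where the same coaction appears twice on the right. For (\ref{ma3}) the right side mixes $\Delta_\diamond$ and $\Delta_\bullet$, so the cross terms $\alpha(m_{1\langle-1\rangle})\otimes m_{2[-1]}$ and its flip do not obviously pair up. There I would try to balance them against the corresponding cross terms with the roles of $\diamond,\bullet$ exchanged. If that fails, the honest fallback is to record that (\ref{ma3}) forces an additional compatibility, for instance that $\Delta_{\diamond_i}$ and $\Delta_{\bullet_i}$ generate the same cross terms.
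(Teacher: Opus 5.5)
Your reduction to $k=0$ is valid; it is the paper's later $\Delta^{n,0}$ twisting theorem. Your check of (\ref{ma1}) and your pure/mixed analysis of (\ref{ma2}) and (\ref{ma4}) are also correct, and this is the same expansion the paper's proof uses. The gap is (\ref{ma3}), which you leave open, and with the axiom as printed it cannot be closed. The mixed terms of $(\alpha\otimes\Delta_\diamond)\circ\Delta_\bullet$ pair a $\bullet$-leg with a $\diamond$-leg, those of $(\tau\otimes Id_M)\circ(\alpha\otimes\Delta_\bullet)\circ\Delta_\bullet$ are $\tau$-symmetric $\bullet\bullet$-terms, and the left side has no mixed terms at all. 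In fact the statement is then false. Let $L=\mathbb{F}e$ with $\gamma=\Delta_\cdot=0$ and $\alpha=Id_L$. Let $M_1=M_2=\mathbb{F}u\oplus\mathbb{F}v$ with $\alpha_{M_i}=Id$, $\Delta_{\diamond_i}=0$, $\Delta_{\bullet_i}(u)=e\otimes v$ and $\Delta_{\bullet_i}(v)=0$. Each $M_i$ satisfies (\ref{ma1})--(\ref{ma4}) as printed, since the only nontrivial requirement is $(\alpha\otimes\Delta_{\bullet_i})\circ\Delta_{\bullet_i}=0$. Yet on $M_1\otimes M_2$ one gets $(\alpha\otimes\Delta_\bullet)\circ\Delta_\bullet(u\otimes u)=2\,e\otimes e\otimes v\otimes v$, while the other two terms of printed (\ref{ma3}) vanish. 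So your fallback is right for the literal axiom, but it leaves the theorem unproved.

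What you are missing is that printed (\ref{ma3}) is a misprint. It should read $(\Delta_\cdot\otimes\alpha_M)\circ\Delta_\diamond=(\alpha\otimes\Delta_\diamond)\circ\Delta_\bullet-(\tau\otimes Id_M)\circ(\alpha\otimes\Delta_\bullet)\circ\Delta_\diamond$, which is the comodule form of (\ref{dplc3}). Only this version makes the paper's example ``any post-Hom-Lie coalgebra is a comodule over itself'' (with $\Delta_\diamond=\gamma$, $\Delta_\bullet=\Delta_\cdot$) true, and it is the form actually used in the proof of Proposition \ref{nph}. With it, your ``roles exchanged'' pairing works exactly. By (\ref{ma1}), the mixed terms of $(\alpha\otimes\Delta_\diamond)\circ\Delta_\bullet(m_1\otimes m_2)$ are $\alpha^{k+1}(m_{1\langle-1\rangle})\otimes\alpha^{k+1}(m_{2[-1]})\otimes\alpha_{M_1}(m_{1\langle0\rangle})\otimes\alpha_{M_2}(m_{2[0]})$ and $\alpha^{k+1}(m_{2\langle-1\rangle})\otimes\alpha^{k+1}(m_{1[-1]})\otimes\alpha_{M_1}(m_{1[0]})\otimes\alpha_{M_2}(m_{2\langle0\rangle})$. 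The mixed terms of $(\alpha\otimes\Delta_\bullet)\circ\Delta_\diamond(m_1\otimes m_2)$ are precisely their images under $\tau\otimes Id_M$, so they cancel. The pure terms reduce to (\ref{ma3}) for $M_1$ and $M_2$ once $\alpha^k$ is pulled through $\Delta_\cdot$ by multiplicativity. The paper's own proof does not settle this point either. It expands into the same four kinds of terms, checks the right-hand side $(1-\tau\otimes Id_M)\circ(\alpha\otimes\Delta_\diamond)\circ\Delta_\bullet$ instead, and drops the mixed terms without comment.
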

\begin{proof}
We first have:
\begin{eqnarray}
\Delta_\diamond\circ \alpha_M
&=&\Big((\alpha^k\otimes Id_{M_1})\circ\Delta_{\diamond_1} \otimes\alpha_{M_2}+    \alpha_{M_1}\otimes(\alpha^k\otimes Id_{M_2})\circ\Delta_{\diamond_2}  \Big)\circ\alpha_M\nonumber\\
&\stackrel{(\ref{ma1})}{=}&(\alpha^{k+1}\otimes \alpha_{M_1})\circ\Delta_{\diamond_1} \otimes\alpha^2_{M_2}+    \alpha^2_{M_1}\otimes(\alpha^{k+1}\otimes  \alpha_{M_2})\circ\Delta_{\diamond_2}\nonumber\\
&=&(\alpha\otimes\alpha_{M_1}\otimes\alpha_{M_2})\circ\Delta_\diamond\nonumber\\
&=&(\alpha\otimes\alpha_{M})\otimes\Delta_\diamond\nonumber.
\end{eqnarray}
Then,
  \begin{eqnarray}
   \Delta_\diamond\circ\Delta_\bullet
&=&(\alpha\otimes\Delta_\diamond)\circ\Delta_\bullet\nonumber\\
&=&\Big(\alpha\otimes[(\alpha^k\otimes Id_{M_1})\circ\Delta_{\diamond_1} \otimes\alpha_{M_2}+    \alpha_{M_1}\otimes(\alpha^k\otimes Id_{M_2})\circ\Delta_{\diamond_2}]\Big)\circ\Delta_\bullet\nonumber\\
&=&\Big(\alpha^{k+1}\otimes(\alpha^k\otimes Id_{M_1})\circ\Delta_{\diamond_1})\Big)\circ\Delta_{\diamond_1} \otimes\alpha^2_{M_2}\nonumber\\
&&+(\alpha^{k+1}\otimes\alpha_{M_1})\circ\Delta_{\diamond_1}\otimes(\alpha^{k+1}\otimes\alpha_{M_2})\circ\Delta_{\bullet_2}\quad(\mbox{by} \;(\ref{ma1}))\nonumber\\
&&+(\alpha^{k+1}\otimes\alpha_{M_1})\circ\Delta_{\bullet_1}\otimes(\alpha^{k+1}\otimes\alpha_{M_2})\circ\Delta_{\diamond_2}\quad(\mbox{by} \;(\ref{ma1}))\nonumber\\
&&+\alpha^2_{M_1}\otimes\Big(\alpha^{k+1}\otimes((\alpha^k\otimes Id_{M_2})\circ\Delta_{\diamond_2})\Big)\circ\Delta_{\bullet_2}\nonumber.
  \end{eqnarray}
It follows that
\begin{eqnarray}
 &&\qquad(\alpha\otimes\Delta_\diamond)\circ\Delta_\bullet-(\tau\otimes I_{M})(\alpha\otimes\Delta_\diamond)\circ\Delta_\bullet=\nonumber\\
&&=\Big(\alpha^{k+1}\otimes(\Delta_{\diamond_1}\circ(\alpha^k\otimes I_{M_1})\circ\Delta_{\bullet_1})\Big)\otimes\alpha^2_{M_2}
-\Big((\tau\otimes I_{M})\circ(\alpha^{k+1}\otimes(\Delta_{\diamond_1}\circ(\alpha^k\otimes I_{M_1})\circ\Delta_{\bullet_1})\Big)\otimes\alpha^2_{M_2}\nonumber\\
&&\;\;+\alpha^2_{M_1}\otimes\Big(\alpha^{k+1}\otimes((\alpha^k\otimes I_{M_2})\circ\Delta_{\diamond_2}\circ\Delta_{\bullet_2})\Big)
-\alpha_{M_1}^2\otimes\Big((\tau\otimes I_{M_2})\circ(\alpha^{k+1}\otimes((\alpha^k\otimes I_{M_2})\circ\Delta_{\diamond_2}\circ\Delta_{\bullet_1})\Big)
\nonumber\\
&&=((\Delta\circ\alpha^k)\otimes\alpha_{M_1})\circ\Delta_{\diamond_1}\otimes\alpha^2_{M_2}+\alpha^2_{M_1}\otimes((\Delta\circ\alpha^k)\otimes\alpha_{M_2})\circ\Delta_{\diamond_2}\nonumber\\
&&=(\Delta_\cdot\otimes_\diamond\otimes\alpha_{M_1}\otimes\alpha_{M_2})\circ\Delta_\diamond=(\Delta_\cdot\otimes\alpha_M)\circ\Delta_\diamond.\nonumber
\end{eqnarray}
The others relations are proved in a similar way.
\end{proof}
\begin{cor}
 Let $(L, \gamma, \Delta_\cdot, \alpha)$ be a comultiplicative post-Hom-Lie coalgebra. Then, $(L\otimes L, \alpha\otimes\alpha)$ is an $L$-comodule
with the actions
 \begin{eqnarray}
  \Delta_\diamond &:=&(\alpha\otimes I)\circ\gamma\otimes\alpha + (\tau\otimes I)\circ\Big(\alpha\otimes(\alpha\otimes I)\circ\gamma\Big),\nonumber\\
  \Delta_\bullet&:=&(\alpha\otimes I)\circ\Delta_\cdot\otimes\alpha + (\tau\otimes I)\circ\Big(\alpha\otimes(\alpha\otimes I)\circ\Delta_\cdot\Big)\nonumber.
 \end{eqnarray}
\end{cor}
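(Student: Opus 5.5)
The plan is to obtain the corollary as a direct specialization of Theorem \ref{mpm}, using the comodules produced by Proposition \ref{nph} as input. Since $(L,\gamma,\Delta_\cdot,\alpha)$ is comultiplicative, the example above says $L$ is a comodule over itself, with structure maps $\Delta_\diamond=\gamma$, $\Delta_\bullet=\Delta_\cdot$ and $\alpha_M=\alpha$. Alternatively, Proposition \ref{nph} with $k=0$ gives the same comodule. I will take $M_1=M_2=L$ with these structure maps. Comultiplicativity of $\alpha$ with respect to $\gamma$ and $\Delta_\cdot$ provides the multiplicativity hypothesis needed in Theorem \ref{mpm}.

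Next I apply Theorem \ref{mpm} with $k=1$. The twisting factor $(\alpha^k\otimes Id)$ then becomes $(\alpha\otimes I)$, and $\alpha_M=\alpha\otimes\alpha$, which matches the Hom-module $(L\otimes L,\alpha\otimes\alpha)$ in the statement. Substituting, the theorem gives
\[
\Delta_\diamond=(\alpha\otimes I)\circ\gamma\otimes\alpha+\alpha\otimes(\alpha\otimes I)\circ\gamma,
\]
and similarly for $\Delta_\bullet$ with $\gamma$ replaced by $\Delta_\cdot$. These maps take values in $L\otimes M_1\otimes M_2$ in the theorem's convention. In the second summand, however, the $L$-factor produced by $\Delta_{\diamond_2}$ lands in the middle slot, that is, in $M_1\otimes L\otimes M_2$. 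Applying the flip $(\tau\otimes I)$ moves it to the front. This is exactly how the $(\tau\otimes I)$ in the stated formula arises, and it is the implicit convention already needed to make the codomain of $\Delta_\diamond$ in Theorem \ref{mpm} correct.

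The only real obstacle is this bookkeeping of tensor positions. Theorem \ref{mpm} suppresses the flip that puts the $L$-factor first, so I must check two things.

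\begin{itemize}
\item Inserting $(\tau\otimes I)$ in the second summand is compatible with each axiom (\ref{ma1})--(\ref{ma4}). This holds because every axiom acts on the $L$-leg by operators of the form $\gamma\otimes\alpha_M$, $\Delta_\cdot\otimes\alpha_M$ and $\alpha\otimes\Delta$, and these commute with the relevant permutation of the $M_1$-leg past the $L$-legs. Also, $\alpha_M=\alpha\otimes\alpha$ acts diagonally.
\item Comultiplicativity holds: $\Delta_\diamond\circ(\alpha\otimes\alpha)=(\alpha\otimes\alpha\otimes\alpha)\circ\Delta_\diamond$. This follows termwise from $\gamma\circ\alpha=(\alpha\otimes\alpha)\circ\gamma$ and the analogous identity for $\Delta_\cdot$.
\end{itemize}

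The conclusion then follows from Theorem \ref{mpm}.
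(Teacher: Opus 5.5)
Your proposal is correct and takes the same approach as the paper, whose proof is simply the one-line citation of Proposition \ref{nph} together with Theorem \ref{mpm}; taking $k=0$ in the first and $k=1$ in the second, as you do, or the reverse split, yields the same coactions. Your remark that the second summand must be flipped by $(\tau\otimes I)$ to land in $L\otimes M_1\otimes M_2$ is the convention left implicit in Theorem \ref{mpm}, and it accounts for the form of the stated maps.
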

\begin{proof}
 It comes from Proposition \ref{nph} and Theorem \ref{mpm}.
\end{proof}
\begin{thm}
 Let $(M, \Delta_\diamond, \Delta_\bullet, \alpha_M)$ be a comodule over the post-Hom-Lie coalgebra $(L, \gamma, \Delta_\cdot, \alpha)$. For any non-negative integer $n$,
 define
\begin{eqnarray}
 \Delta^{n,0}_\diamond&:=&(\alpha^n\otimes Id_M)\circ\Delta_\diamond :  M\rightarrow L\otimes M,\label{ms1}\\
\Delta^{n,0}_\bullet&:=&(\alpha^n\otimes Id_M)\circ\Delta_\bullet :  M\rightarrow L\otimes M\label{ms2}
\end{eqnarray}
Then, $(M, \Delta^{n,0}_\diamond, \Delta^{n,0}_\bullet, \alpha_M)$ is an $L$-comodule.
\end{thm}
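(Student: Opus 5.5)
We verify the four comodule axioms (\ref{ma1})--(\ref{ma4}) for the twisted maps $\Delta^{n,0}_\diamond$ and $\Delta^{n,0}_\bullet$ directly. The argument mirrors the proof given for Hom-tridendriform comodules in Section 3.

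The engine is the comultiplicativity of $\alpha$ with respect to $\gamma$ and $\Delta_\cdot$:
\[
\gamma\circ\alpha^n=(\alpha^n\otimes\alpha^n)\circ\gamma,\qquad
\Delta_\cdot\circ\alpha^n=(\alpha^n\otimes\alpha^n)\circ\Delta_\cdot .
\]
The first identity holds by (\ref{dplc1}). The second requires the post-Hom-Lie coalgebra to be comultiplicative, as in Proposition \ref{nph} and Theorem \ref{mpm}. This hypothesis seems implicit in the statement, and I will assume it.

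Axiom (\ref{ma1}) is immediate: $\alpha^n$ commutes with $\alpha$ on the $L$ factor, so
\[
\Delta^{n,0}_\diamond\circ\alpha_M=(\alpha^n\otimes Id_M)\circ(\alpha\otimes\alpha_M)\circ\Delta_\diamond=(\alpha\otimes\alpha_M)\circ\Delta^{n,0}_\diamond ,
\]
and the same holds for $\bullet$. Here I read (\ref{ma1}) with the factors in the order $L\otimes M$, which is the only meaningful reading.

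For (\ref{ma2}) the computation runs as follows. Start from
\[
(\gamma\otimes\alpha_M)\circ\Delta^{n,0}_\diamond=(\gamma\circ\alpha^n\otimes\alpha_M)\circ\Delta_\diamond=(\alpha^n\otimes\alpha^n\otimes Id_M)\circ(\gamma\otimes\alpha_M)\circ\Delta_\diamond .
\]
Apply (\ref{ma2}) for the original structure. Then use two facts: $\alpha^n\otimes\alpha^n\otimes Id_M$ commutes with $\tau\otimes Id_M$, and
\[
(\alpha^n\otimes\alpha^n\otimes Id_M)\circ(\alpha\otimes\Delta_\diamond)=(\alpha\otimes\Delta^{n,0}_\diamond)\circ(\alpha^n\otimes Id_M).
\]
Together these give
\[
(\alpha\otimes\Delta^{n,0}_\diamond)\circ\Delta^{n,0}_\diamond-(\tau\otimes Id_M)\circ(\alpha\otimes\Delta^{n,0}_\diamond)\circ\Delta^{n,0}_\diamond ,
\]
which is (\ref{ma2}) for the twisted maps. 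Axiom (\ref{ma3}) is handled identically, with $\Delta_\cdot$ in place of $\gamma$ on the left and the mixed $\diamond/\bullet$ terms on the right.

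Axiom (\ref{ma4}) is the one step needing a little care, because its right-hand side carries a term $(\Delta_\cdot\otimes\alpha_M)\circ\Delta_\bullet$ on the coalgebra side. For that term, comultiplicativity of $\Delta_\cdot$ gives
\[
(\Delta_\cdot\otimes\alpha_M)\circ\Delta^{n,0}_\bullet=(\alpha^n\otimes\alpha^n\otimes Id_M)\circ(\Delta_\cdot\otimes\alpha_M)\circ\Delta_\bullet .
\]
The left-hand side $(\gamma\otimes\alpha_M)\circ\Delta^{n,0}_\bullet$ is rewritten the same way using comultiplicativity of $\gamma$. Every term of (\ref{ma4}) then acquires the common prefactor $\alpha^n\otimes\alpha^n\otimes Id_M$, which commutes with $1-(\tau\otimes Id_M)$ and with $(\tau\otimes Id_M)-1$. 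The original identity (\ref{ma4}) therefore transports to the twisted one.

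The main obstacle is bookkeeping rather than mathematics. We must check that each occurrence of $\alpha\otimes\Delta^{n,0}$ absorbs exactly one $\alpha^n$ on the first and on the second $L$ slot, and that the comultiplicativity hypothesis on $\Delta_\cdot$ is really available. Without that hypothesis, (\ref{ma3}) and (\ref{ma4}) would fail in general.
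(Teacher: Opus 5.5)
Your argument is correct and follows the paper's own route: pull $\alpha^n\otimes\alpha^n\otimes Id_M$ out of $(\gamma\otimes\alpha_M)\circ\Delta^{n,0}_\diamond$ by comultiplicativity, apply the original axiom, and reabsorb the prefactor as $(\alpha\otimes\Delta^{n,0})\circ\Delta^{n,0}$, with (\ref{ma1}) handled by commuting $\alpha^n$ past $\alpha$. Your extra hypothesis $\Delta_\cdot\circ\alpha=(\alpha\otimes\alpha)\circ\Delta_\cdot$ is genuinely needed. Definition \ref{tdd} only makes $\gamma$ comultiplicative, yet the paper's ``similarly proved'' for (\ref{ma3})--(\ref{ma4}) silently uses this identity, and without it (\ref{ma3}) can actually fail for the twisted maps.
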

\begin{proof}
For any positive integer $n$, we have
 \begin{eqnarray}
  \Delta^{n,0}_\diamond&=&(\alpha^n\otimes Id_M)\circ\Delta_\diamond\circ\alpha_M
=(\alpha^n\otimes Id_M)\circ(\alpha\otimes\alpha_M)\circ\Delta_\diamond\nonumber\\
&=&(\alpha\otimes \alpha_M)\circ(\alpha^n\otimes Id_M)\circ\Delta_\diamond=(\alpha\otimes\alpha_M)\circ\Delta^{n,0}_\diamond.\nonumber
 \end{eqnarray}
Next
\begin{eqnarray}
(\gamma\otimes\alpha_M)\circ \Delta^{n,0}_\diamond
&=&(\gamma\otimes\alpha_M)\circ(\alpha^n\otimes Id_M)\circ\Delta_\diamond\nonumber\\
&=&(\gamma\circ\alpha^n\otimes\alpha_M)\circ\Delta_\diamond\nonumber\\
&=&(\gamma\circ(\alpha^n\otimes\alpha^n)\otimes\alpha_M)\circ\Delta_\diamond\nonumber\\
&=&(\alpha^n\otimes\alpha^n\otimes Id_M)\circ(\gamma\otimes\alpha_M)\circ\Delta_\diamond\nonumber.
\end{eqnarray}
Then using (\ref{ma2}) and (\ref{ms1}) we obtain
\begin{eqnarray}
 (\gamma\otimes\alpha_M)\circ\Delta^{n,0}_\diamond
&=&(\alpha^n\otimes\alpha^n\otimes Id_M)\circ(1-(\tau\otimes Id_M))\circ(\alpha\otimes\Delta_\diamond)\circ\Delta_\diamond\nonumber\\
&=&\Big(\alpha^{n+1}\otimes(\alpha^n\otimes Id_M)\circ\Delta_\diamond\Big)\circ\Delta_\diamond\nonumber\\
&&- \Big(\alpha^{n+1}\otimes[(\tau\otimes Id_M)\circ(\alpha^n\otimes Id_M)\circ\Delta_\diamond]\Big)\circ\Delta_\diamond\nonumber\\
&=&(\alpha^{n+1}\otimes\Delta^{n,0}_\diamond)\circ\Delta_\diamond-(\tau\otimes Id_M)(\alpha^{n+1}\otimes\Delta^{n,0}_\diamond)\circ\Delta_\diamond\nonumber\\
&=&(\alpha\otimes\Delta^{n,0}_\diamond)\circ(\alpha^n\otimes Id_M)\circ\Delta_\diamond\nonumber\\
&&-(\tau\otimes Id_M)(\alpha\otimes\Delta^{n,0}_\diamond)\circ(\alpha^n\otimes Id_M)\circ\Delta_\diamond\nonumber\\
&=&(\alpha\otimes\Delta^{n,0}_\diamond)\circ\Delta^{n,0}_\diamond-(\tau\otimes Id_M)(\alpha\otimes\Delta^{n,0}_\diamond)\circ\Delta^{n,0}_\diamond\nonumber.
\end{eqnarray}
The others identities are similarly proved.
\end{proof}
\begin{thm}
 Let $(M, \Delta_\diamond, \Delta_\bullet, \alpha_M)$ be a comodule over the post-Hom-Lie coalgebra $(L, \gamma, \Delta_\cdot, \alpha)$. For any non-negative integer $k$,
 define
\begin{eqnarray}
 \Delta^{0, k}_\diamond&:=&\Delta_\diamond\circ\alpha_M^{2^k} :  M\rightarrow L\otimes M,\label{ms3}\\
\Delta^{0, k}_\bullet&:=&\Delta_\bullet\circ\alpha_M^{2^k} :  M\rightarrow L\otimes M.\label{ms4}
\end{eqnarray}
Then, $(M, \Delta^{0,k}_\diamond, \Delta^{0,k}_\bullet, \alpha_M^{2^k})$ is a comodule over 
$(L, \gamma\circ\alpha^{2^k-1}, \Delta_\cdot\circ\alpha^{2^k-1}, \alpha^{2^k})$.
\end{thm}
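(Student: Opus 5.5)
The plan is to argue by induction on $k$, reducing everything to a single ``one-step'' twisting lemma. That lemma says: if $(M, \Delta_\diamond, \Delta_\bullet, \alpha_M)$ is a comodule over $(L, \gamma, \Delta_\cdot, \alpha)$, then $(M, \Delta_\diamond\circ\alpha_M, \Delta_\bullet\circ\alpha_M, \alpha_M^2)$ is a comodule over $(L, \gamma\circ\alpha, \Delta_\cdot\circ\alpha, \alpha^2)$. The case $k=0$ is trivial, since $\alpha_M^{2^0}=\alpha_M$ and $\alpha^{2^0-1}=Id$. For the inductive step, apply the one-step lemma to the comodule at level $k$. The new data are then
$$\Delta^{0,k}_\diamond\circ\alpha_M^{2^k}=\Delta_\diamond\circ\alpha_M^{2^{k+1}},\qquad (\gamma\circ\alpha^{2^k-1})\circ\alpha^{2^k}=\gamma\circ\alpha^{2^{k+1}-1},\qquad (\alpha^{2^k})^2=\alpha^{2^{k+1}},$$
and similarly for $\Delta_\bullet$ and $\Delta_\cdot$. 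These are exactly the data at level $k+1$. Throughout I assume, as in the preceding results, that $\gamma$ and $\Delta_\cdot$ are comultiplicative with respect to $\alpha$ and that the structure maps satisfy (\ref{ma1}). I read (\ref{ma1}) as $\Delta_\diamond\circ\alpha_M=(\alpha\otimes\alpha_M)\circ\Delta_\diamond$, with the factors in the order matching $L\otimes M$. With these assumptions, the twisted post-Hom-Lie coalgebra $(L,\gamma\circ\alpha,\Delta_\cdot\circ\alpha,\alpha^2)$ is again comultiplicative (by Lemma \ref{hplm1} with $\beta=\alpha$), so the induction can be iterated.

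To prove the one-step lemma, I check the four axioms in turn. Axiom (\ref{ma1}) for the new maps is immediate: $\Delta_\diamond\alpha_M\circ\alpha_M^2=(\alpha^2\otimes\alpha_M^2)\circ\Delta_\diamond\alpha_M$, by applying (\ref{ma1}) twice. For (\ref{ma2}) the computation is as follows. The left side is
$$(\gamma\alpha\otimes\alpha_M^2)\circ\Delta_\diamond\circ\alpha_M=(\gamma\alpha^2\otimes\alpha_M^3)\circ\Delta_\diamond=(\alpha^2\otimes\alpha^2\otimes\alpha_M^2)\circ(\gamma\otimes\alpha_M)\circ\Delta_\diamond,$$
using (\ref{ma1}) and then comultiplicativity of $\gamma$. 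The first term on the right side is
$$(\alpha^2\otimes\Delta_\diamond\alpha_M)\circ\Delta_\diamond\alpha_M=(\alpha^3\otimes\Delta_\diamond\alpha_M^2)\circ\Delta_\diamond=(\alpha^2\otimes\alpha^2\otimes\alpha_M^2)\circ(\alpha\otimes\Delta_\diamond)\circ\Delta_\diamond .$$
The $\tau$-term behaves the same way, because $\tau\otimes Id_M$ commutes with $\alpha^2\otimes\alpha^2\otimes\alpha_M^2$. Hence the new identity is the old identity (\ref{ma2}) with the operator $\alpha^2\otimes\alpha^2\otimes\alpha_M^2$ applied to both sides.

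Axioms (\ref{ma3}) and (\ref{ma4}) are handled identically. In every term, each outer use of a twisted comultiplication or of $\alpha^2$ is pushed through (\ref{ma1}) and comultiplicativity. Each of the five kinds of terms, namely $(\Delta_\cdot\otimes\alpha_M)\Delta_\diamond$, $(\alpha\otimes\Delta_\diamond)\Delta_\bullet$, $(\alpha\otimes\Delta_\bullet)\Delta_\bullet$, $(\gamma\otimes\alpha_M)\Delta_\bullet$ and $(\Delta_\cdot\otimes\alpha_M)\Delta_\bullet$, then picks up the same common factor $\alpha^2\otimes\alpha^2\otimes\alpha_M^2$ in front.

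The main obstacle is purely bookkeeping: the exponents must come out uniform across all terms of each axiom. This is what forces the choice $2^k$ for the comodule twist and $2^k-1$ for the coalgebra twist. The one-step computation above shows the exponents match (three factors of $\alpha$ or $\alpha_M$ are collected on each side, and one is absorbed into the old axiom). If one tries to prove the statement directly for general $k$ without induction, the exponents appear not to match. So I would insist on the inductive route, and take care that the comultiplicativity hypotheses needed at each stage are inherited by the twisted post-Hom-Lie coalgebra.
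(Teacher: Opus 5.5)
Your one-step lemma and your inductive step are correct, but the base case is wrong. With the definition in the statement, $\Delta^{0,0}_\diamond=\Delta_\diamond\circ\alpha_M^{2^0}=\Delta_\diamond\circ\alpha_M$. So the case $k=0$ asserts that $(M,\Delta_\diamond\circ\alpha_M,\Delta_\bullet\circ\alpha_M,\alpha_M)$ is a comodule over $(L,\gamma,\Delta_\cdot,\alpha)$. Observing that $\alpha_M^{2^0}=\alpha_M$ and $\alpha^{2^0-1}=Id$ does not turn this into the given comodule. Your bookkeeping $\Delta^{0,k}_\diamond\circ\alpha_M^{2^k}=\Delta_\diamond\circ\alpha_M^{2^{k+1}}$ is consistent only with this twisted base case, and that case is false in general. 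Using nothing but (\ref{ma1}), one gets for every $a\geq 0$
\begin{gather*}
((\gamma\circ\alpha^{a})\otimes\alpha_M^{a+1})\circ\Delta_\diamond\circ\alpha_M^{a}=(\gamma\otimes\alpha_M)\circ\Delta_\diamond\circ\alpha_M^{2a},\\
(\alpha^{a+1}\otimes(\Delta_\diamond\circ\alpha_M^{a}))\circ\Delta_\diamond\circ\alpha_M^{a}=(\alpha\otimes\Delta_\diamond)\circ\Delta_\diamond\circ\alpha_M^{2a},
\end{gather*}
and the same holds for every other term of (\ref{ma1})--(\ref{ma4}). So twisting the structure maps by $\alpha_M^{a}$ goes with the comodule map $\alpha_M^{a+1}$ and the coalgebra $(L,\gamma\circ\alpha^{a},\Delta_\cdot\circ\alpha^{a},\alpha^{a+1})$. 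The statement's structure-map exponent $2^k$ is one too large relative to $\alpha_M^{2^k}$, $\alpha^{2^k-1}$ and $\alpha^{2^k}$. The mismatch you noticed when attempting a direct proof is therefore real. Induction cannot repair it; it only relocates the failure to $k=0$.

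Here is a concrete counterexample. Take $L=\langle e_1,e_2\rangle$ with $\gamma=\Delta_\cdot=0$ and $\alpha=\mathrm{diag}(1,2)$. Take $M=\langle m_0,\dots,m_3\rangle$ with $\alpha_M=\mathrm{diag}(2,2,1,1)$, $\Delta_\bullet=0$, and $\Delta_\diamond(m_0)=e_1\otimes m_1+e_2\otimes m_2$, $\Delta_\diamond(m_1)=2e_2\otimes m_3$, $\Delta_\diamond(m_2)=e_1\otimes m_3$, $\Delta_\diamond(m_3)=0$. Then (\ref{ma1}) holds. Moreover $(\alpha\otimes\Delta_\diamond)\Delta_\diamond(m_0)=2(e_1\otimes e_2+e_2\otimes e_1)\otimes m_3$ is fixed by $\tau\otimes Id_M$, and $(\alpha\otimes\Delta_\diamond)\Delta_\diamond$ vanishes on $m_1,m_2,m_3$. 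So this is a comodule, and your comultiplicativity assumptions hold trivially. But $(\alpha\otimes\Delta_\diamond\alpha_M)\Delta_\diamond\alpha_M(m_0)=(8e_1\otimes e_2+4e_2\otimes e_1)\otimes m_3$ is not fixed by $\tau\otimes Id_M$. Hence (\ref{ma2}) fails for $k=0$, and the same computation breaks it for every $k$.

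The exponent in the statement is a misprint. The paper's proof actually works with $\Delta_\diamond\circ\alpha_M^{2^k-1}$: its first line is $\Delta^{0,k}_\diamond\circ\alpha_M^{2^k}=\Delta_\diamond\circ\alpha_M^{2^k-1}\circ\alpha_M^{2^k}$. For that version your induction is sound, since the base case really is the given comodule and $(2^k-1)+2^k=2^{k+1}-1$. The paper, however, argues directly, as in the display with $a=2^k-1$: each new axiom is the old one precomposed with $\alpha_M^{2(2^k-1)}$. That route uses only (\ref{ma1}). Unlike your post-composition by $\alpha^2\otimes\alpha^2\otimes\alpha_M^2$, it needs no comultiplicativity of $\Delta_\cdot$ for the comodule identities. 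It also shows that nothing forces powers of $2$.
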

\begin{proof}
For any positif integer $k$, we have
 \begin{eqnarray}
  \Delta^{0,k}_\diamond\circ\alpha_M^{2^k}
&=&\Delta_\diamond\circ\alpha_M^{2^k-1}\circ\alpha_M^{2^k}
=\Delta_\diamond\circ\alpha_M^{2^k}\circ\alpha_M^{2^k-1}
=\Delta_\diamond\circ\alpha_M\circ\alpha_M^{2^k-1}\circ\alpha_M^{2^k-1}\nonumber\\
&=&(\alpha\otimes\alpha_M)\circ\Delta_\diamond\circ\alpha_M^{2^k-2}\circ\alpha_M^{2^k-1}
=(\alpha^2\otimes\alpha^2_M)\circ\Delta_\diamond\circ\alpha_M^{2^k-2}\circ\alpha_M^{2^k-1}\nonumber\\
&=&(\alpha^2\otimes\alpha_M^2)\circ\Delta_\diamond\circ\alpha_M^{2^k-2}\circ\alpha_M^{2^k-1}
=\dots=(\alpha^{2^k}\otimes\alpha_M^{2^k})\circ\Delta_\diamond\circ\alpha_M^{2^k-1}\nonumber\\
&=&\Delta^{0, k}_\diamond\circ(\alpha^{2^k}\otimes\alpha_M^{2^k}).\nonumber
 \end{eqnarray}
Next
\begin{eqnarray}
 &&\qquad(\gamma\circ\alpha^{2^k-1}\otimes\alpha_M^{2^k})\circ\Delta^{0, k}_\diamond=\nonumber\\
&&=(\gamma\circ\alpha^{2^k-1}\otimes\alpha_M^{2^k})\circ\Delta_\diamond\circ\alpha_M^{2^k-1}\nonumber\\
&&=(\gamma\circ\alpha^{2^k-2}\otimes\alpha_M^{2^k-1})\circ(\alpha\otimes\alpha_M)\circ\Delta_\diamond\circ\alpha_M^{2^k-1}\nonumber\\
&&=(\gamma\circ\alpha^{2^k-2}\otimes\alpha_M^{2^k-1})\circ\Delta_\diamond\circ\alpha_M^{2^k}=\dots=\nonumber\\
&&=(\gamma\otimes\alpha_M)\circ\Delta_\diamond\circ\alpha_M^{2(2^k-1)}\nonumber.
\end{eqnarray}
Then using (\ref{ma2}) and  the first identity in (\ref{ma1}) ($2^k-1$ times) we get
\begin{eqnarray}
&&\qquad(\gamma\circ\alpha^{2^k-1}\otimes\alpha_M^{2^k})\circ\Delta^{0, k}_\diamond\nonumber\\
&&=\Big((\alpha\otimes\Delta_\diamond)\circ\Delta_\circ-(\tau\otimes Id_M)\circ(\alpha\otimes\Delta_\diamond)\circ\Delta_\diamond\Big)\circ\alpha_M^{2(2^k-1)}\nonumber\\
&&=(\alpha\otimes\Delta_\diamond)\circ\Delta_\diamond\circ\alpha_M^{2^k-1}\circ\alpha_M^{2^k-1}
-(\tau\otimes Id_M)\circ(\alpha\otimes\Delta_\diamond)\circ\Delta_\diamond\circ\alpha_M^{2^k-1}\circ\alpha_M^{2^k-1}\nonumber\\
&&=(\alpha\otimes\Delta_\diamond)\circ\Delta_\diamond\circ\alpha_M\circ\alpha_M^{2^k-1}\circ\alpha_M^{2^k-1}
-(\tau\otimes Id_M)\circ(\alpha\otimes\Delta_\diamond)\circ\Delta_\diamond\circ\alpha_M\circ\alpha_M^{2^k-1}\circ\alpha_M^{2^k-1}\nonumber\\
&&=(\alpha\otimes\Delta_\diamond\otimes\alpha_M)\circ\Delta_\diamond\circ\alpha_M^{2^k-2}\circ\alpha_M^{2^k-1}
-(\tau\otimes Id_M)\circ(\alpha\otimes\Delta_\diamond\otimes\alpha_M)\circ\Delta_\diamond\circ\alpha_M^{2^k-2}\circ\alpha_M^{2^k-1}\nonumber\\
&&=\dots=\nonumber\\
&&=(\alpha^{2^k}\otimes\Delta_\diamond\circ\alpha_M^{2^k-1})\circ\Delta_\diamond\circ\alpha_M^{2^k-1}
-(\tau\otimes Id_M)\circ(\alpha^{2^k}\otimes\Delta_\diamond\circ\alpha_M^{2^k-1})\circ\Delta_\diamond\circ\alpha_M^{2^k-1}\nonumber\\
&&=(\alpha^{2^k}\otimes\Delta^{0, k}_\diamond)\circ\Delta^{0, k}_\diamond-(\tau\otimes Id_M)\circ(\alpha^{2^k}\otimes\Delta^{0, k}_\diamond)\circ\Delta^{0, k}_\diamond\nonumber.
\end{eqnarray}

The proofs of the rest of the conditions are analogue.
\end{proof}
\begin{thm}
 Let $(M, \Delta_\diamond, \Delta_\bullet, \alpha_M)$ be a comodule over the post-Hom-Lie coalgebra $(L, \gamma, \Delta_\cdot, \alpha)$.
Let $\beta : L\rightarrow L$ be an endomorphism of $A$ and $\beta_M : M\rightarrow M$ be a linear map such that
$\alpha_M\circ\beta_M=\beta_M\circ\alpha_M$, $\Delta_\diamond\circ\beta_M=(\beta\otimes\beta_M)\circ\Delta_\diamond$ and 
$\Delta_\bullet\circ\beta_M=(\beta\otimes\beta_M)\circ\Delta_\bullet$.
Define
\begin{eqnarray}
\tilde\Delta_\diamond:=(\beta\otimes Id_M)\circ\Delta_\diamond\circ\beta_M :  M\rightarrow L\otimes M,\label{ms5}\\
\tilde\Delta_\bullet:=(\beta\otimes Id_M)\circ\Delta_\bullet\circ\beta_M :  M\rightarrow L\otimes M.\label{ms6}
\end{eqnarray}
Then, $(M, \tilde\Delta_\diamond, \tilde\Delta_\bullet, \alpha_M\circ\beta_M)$ is a comodule over $(L, \gamma\circ\beta, \Delta_\cdot\circ\beta, \beta\circ\alpha)$.
\end{thm}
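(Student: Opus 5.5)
Writing $\tilde\alpha_M=\alpha_M\circ\beta_M$, $\gamma_\beta=\gamma\circ\beta$, $\Delta_\beta=\Delta_\cdot\circ\beta$ and $\alpha_\beta=\beta\circ\alpha$, the plan is to check the four comodule axioms (\ref{ma1})--(\ref{ma4}) for the twisted data directly. In each case we push every occurrence of $\beta$ and $\beta_M$ to the outside and then apply the corresponding axiom for the original comodule. This is the same mechanism as in Lemma \ref{hplm1} and the preceding twisting theorems.

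We use the hypotheses in the following form. Since $\beta$ is an endomorphism of $(L,\gamma,\Delta_\cdot,\alpha)$, we have
\[
\gamma\circ\beta=(\beta\otimes\beta)\circ\gamma,\quad \Delta_\cdot\circ\beta=(\beta\otimes\beta)\circ\Delta_\cdot,\quad \beta\circ\alpha=\alpha\circ\beta.
\]
The remaining relations are $\Delta_\diamond\circ\beta_M=(\beta\otimes\beta_M)\circ\Delta_\diamond$, the same for $\Delta_\bullet$, and $\alpha_M\beta_M=\beta_M\alpha_M$. Two consequences will be used repeatedly:
\[
\tilde\Delta_\diamond=(\beta^2\otimes\beta_M)\circ\Delta_\diamond=(\beta\otimes Id_M)\circ(\beta\otimes\beta_M)\circ\Delta_\diamond,
\]
and $\tau\otimes Id_M$ commutes with $\beta\otimes\beta\otimes\beta_M$. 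The same holds for $\bullet$.

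\textbf{Axiom (\ref{ma1}).} We compute
\[
\tilde\Delta_\diamond\circ\tilde\alpha_M=(\beta\otimes Id_M)\Delta_\diamond\alpha_M\beta_M^2 .
\]
Applying (\ref{ma1}) for $\Delta_\diamond$ and then the $\beta_M$-equivariance yields the form $(\beta\alpha\otimes\alpha_M\beta_M)\circ\tilde\Delta_\diamond$. As in the paper, the tensor factors of (\ref{ma1}) are read in the order $L\otimes M$.

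\textbf{Axioms (\ref{ma2})--(\ref{ma4}).} Each side is a sum of terms of shape $(X\otimes Y)\circ Z$. We first take the left side of (\ref{ma2}):
\[
(\gamma\beta\otimes\alpha_M\beta_M)\circ(\beta\otimes Id_M)\Delta_\diamond\beta_M
=(\beta^2\otimes\beta^2\otimes\beta_M)\circ(\gamma\otimes\alpha_M)\circ(\beta\otimes\beta_M)\Delta_\diamond\beta_M .
\]
Moving $(\beta\otimes\beta_M)$ back through $\Delta_\diamond$, this equals $(\beta^{3}\otimes\beta^{3}\otimes\beta_M^{2})\circ(\gamma\otimes\alpha_M)\circ\Delta_\diamond$. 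After the same normalization, the right-hand terms become
\[
(\beta\alpha\otimes\tilde\Delta_\diamond)\tilde\Delta_\diamond=(\beta^3\otimes\beta^3\otimes\beta_M^2)\circ(\alpha\otimes\Delta_\diamond)\circ\Delta_\diamond .
\]
Here we use $\beta\alpha=\alpha\beta$ and equivariance twice. Since $\tau\otimes Id_M$ commutes with $\beta^3\otimes\beta^3\otimes\beta_M^2$, the twisted axiom reduces to the original (\ref{ma2}) precomposed by $(\beta^3\otimes\beta^3\otimes\beta_M^2)$. The identical bookkeeping handles (\ref{ma3}), where $\Delta_\cdot$ and both structure maps $\diamond,\bullet$ appear, and (\ref{ma4}).

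\textbf{Main obstacle.} We do not expect a conceptual difficulty; the delicate point is the exponent count. Every term must end up with the same outer factor $\beta^3\otimes\beta^3\otimes\beta_M^2$. Terms of the form $(\gamma_\beta\otimes\tilde\alpha_M)\tilde\Delta$ and $(\alpha_\beta\otimes\tilde\Delta)\tilde\Delta$ acquire their powers of $\beta$ differently: one from $\gamma\beta$ plus the prefactor, the other from $\beta\alpha$ plus the inner $\tilde\Delta$. We will therefore check carefully in (\ref{ma3}) and (\ref{ma4}), where mixed $\diamond/\bullet$ compositions occur, that equivariance of both structure maps and the assumption that $\beta$ preserves $\Delta_\cdot$ give matching exponents.
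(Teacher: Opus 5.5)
Your proposal is correct and follows essentially the paper's route: push $\beta$ and $\beta_M$ outward using the endomorphism and equivariance relations, apply the untwisted axiom, and reassemble. Normalizing every term to the common outer factor $\beta^3\otimes\beta^3\otimes\beta_M^2$ is a tidier form of the paper's $(\beta\otimes\beta\otimes Id_M)\circ(\cdots)\circ\beta_M^2$ bookkeeping, and it makes (\ref{ma3})--(\ref{ma4}) routine; the only slip is that your intermediate display for the left side of (\ref{ma2}) carries a superfluous trailing $\beta_M$ (as written it equals $(\beta^4\otimes\beta^4\otimes\beta_M^3)\circ(\gamma\otimes\alpha_M)\circ\Delta_\diamond$), though the normalized form $(\beta^3\otimes\beta^3\otimes\beta_M^2)\circ(\gamma\otimes\alpha_M)\circ\Delta_\diamond$ you then use is right.
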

\begin{proof}
We have
\begin{eqnarray}
\tilde\Delta_\diamond\circ(\alpha_M\circ\beta_M)
&=&(\beta\otimes Id_M)\circ\Delta_\diamond\circ\beta_M\circ(\alpha_M\circ\beta_M)\nonumber\\
&=&(\beta\otimes Id_M)\circ(\beta\otimes\beta_M)\circ\Delta_\diamond\circ\alpha_M\circ\beta_M\nonumber\\
&=&(\beta\otimes Id_M)\circ(\beta\otimes\beta_M)\circ(\alpha\otimes\alpha_M)\circ\Delta_\diamond\circ\beta_M\nonumber\\
&=&(\beta\circ\alpha\otimes \beta_M\circ\alpha_M)\circ(\beta\otimes Id_M)\circ\Delta_\diamond\circ\beta_M\nonumber\\
&=&(\beta\circ\alpha\otimes\beta_M\circ\alpha_M)\circ\tilde\Delta_\diamond.\nonumber
\end{eqnarray}
Next
\begin{eqnarray}
 &&\qquad(\gamma\circ\beta\otimes\beta_M\circ\alpha_M)\circ\tilde\Delta_\diamond\nonumber\\
&&=(\gamma\circ\beta\otimes\alpha_M\circ\beta_M)\circ(\beta\otimes\beta_M)\circ(\beta\otimes Id_M)\circ\Delta_\circ\beta_M\nonumber\\
&&=(\gamma\otimes\alpha_M)(\beta\circ Id_M)\circ(\beta\otimes\beta_M)\circ\Delta_\circ\beta_M \nonumber\\
&&=(\beta\otimes\beta\otimes Id_M)\circ(\gamma\otimes \alpha_M)\circ(\beta\otimes Id_M)\circ(\beta\otimes\beta_M)\circ\Delta_\diamond\circ\beta_M\nonumber\\
&&=(\beta\otimes\beta\otimes Id_M)\circ(\gamma\otimes\alpha_M)\circ\Delta_\diamond \circ\beta_M^2\nonumber.
\end{eqnarray}
Now using (\ref{ma2}), the first identity in (\ref{ma1}) and (\ref{ms5}) we obtain
\begin{eqnarray}
&&\qquad(\gamma\circ\beta\otimes\beta_M\circ\alpha_M)\circ\tilde\Delta_\diamond=\nonumber\\
&&=(\beta\otimes\beta\otimes Id_M)\circ\Big((\alpha\otimes\Delta_\diamond)\circ\Delta_\diamond
-(\tau\otimes Id_M)\circ(\alpha\otimes\Delta_\diamond)\circ\Delta_\diamond\Big)\circ\beta^2_M\nonumber\\
&&=[\beta\circ\alpha\otimes(\beta\otimes Id_M)\circ\Delta_\diamond]\circ\Delta_\diamond\circ\beta_M\circ\beta_M\nonumber\\
&&\quad-(\tau\otimes Id_M)\circ[\beta\circ\alpha\otimes(\beta\otimes Id_M)\circ\Delta_\diamond]\circ\Delta_\diamond\circ\beta_M\circ\beta_M\nonumber\\
&&=[\beta\circ\alpha^2\otimes(\beta\otimes Id_M)\circ\Delta_\diamond\circ\alpha_M]\circ\Delta_\diamond\circ\beta\nonumber\\
&&\quad-(\tau\otimes Id_M)[\beta\circ\alpha^2\otimes(\beta\otimes Id_M)\circ\Delta_\diamond\circ\alpha_M]\circ\Delta_\diamond\circ\beta\nonumber\\
&&=(\beta\circ\alpha^2\otimes\tilde\Delta_\diamond)\circ\Delta_\diamond\circ\beta_M
-(\tau\otimes Id_M)(\beta\circ\alpha^2\otimes\tilde\Delta_\diamond)\circ\Delta_\diamond\circ\beta_M\nonumber\\
&&=(\beta\circ\alpha\otimes\tilde\Delta_\diamond)\circ(\beta\otimes Id_M)\circ\Delta_\diamond\circ\beta_M\nonumber\\
&&\quad-(\tau\otimes Id_M)\circ(\beta\circ\alpha\otimes\tilde\Delta_\diamond)\circ(\beta\otimes Id_M)\circ\Delta_\diamond\circ\beta_M\nonumber\\
&&=
(\beta\circ\alpha\otimes\tilde\Delta_\diamond)\circ\tilde\Delta_\diamond
-(\tau\otimes Id_M)\circ(\beta\circ\alpha\otimes\tilde\Delta_\diamond)\circ\tilde\Delta_\diamond\nonumber.
\end{eqnarray}
The others identities are analogously proved.
\end{proof}

\end{document}